\newtheorem{theorem}{Theorem}[section]
\newtheorem{proposition}{Proposition}[section]
\newtheorem{lemma}{Lemma}[section]
\newtheorem{definition}{Definition}[section]
\newtheorem{ansatz}{Ansatz}[section]
\newtheorem{remark}{Remark}[section]
\newenvironment{proof}[1][]{\noindent\textit{Proof#1.} }{\vskip\baselineskip}
\renewcommand\hat[1]{\widehat{#1}}
\newcommand\argmax{\operatorname{argmax}}
\newcommand\qed{\hfill$\Box$}
\newcommand\bcdot{\ensuremath{%
  \mathchoice%
   {\mskip\thinmuskip\lower0.2ex\hbox{\scalebox{1.5}{$\cdot$}}\mskip\thinmuskip}}%
   {\mskip\thinmuskip\lower0.2ex\hbox{\scalebox{1.5}{$\cdot$}}\mskip\thinmuskip}%
   {\lower0.3ex\hbox{\scalebox{1.2}{$\cdot$}}}%
   {\lower0.3ex\hbox{\scalebox{1.2}{$\cdot$}}}%
}
\numberwithin{equation}{section}
\DeclarePairedDelimiter\abs{\lvert}{\rvert}
\DeclarePairedDelimiter\norm{\lVert}{\rVert}
\begin{document}
\title{Nonzero-sum stochastic differential games  between\\ an impulse controller and a stopper}
\author{Luciano Campi} 
\author{Davide De Santis}
\affil{Department of Statistics \\ London School of Economics and Political Science}
\date{\today}
\maketitle

\begin{abstract}
We study a two-player nonzero-sum stochastic differential game where one player controls the state variable via additive impulses while the other player can stop the game at any time. The main goal of this work is characterize Nash equilibria through a verification theorem, which identifies a new system of quasi-variational inequalities whose solution gives equilibrium payoffs with the correspondent strategies. Moreover, we apply the verification theorem to a game with a one-dimensional state variable, evolving as a scaled Brownian motion, and with linear payoff and costs for both players. Two types of Nash equilibrium are fully characterized, i.e. semi-explicit expressions for the equilibrium strategies and associated payoffs are provided. Both equilibria are of threshold type: in one equilibrium players' intervention are not simultaneous, while in the other one the first player induces her competitor to stop the game. Finally, we provide some numerical results describing the qualitative properties of both types of equilibrium.\\

\noindent
\textbf{Keywords and phrases:} controller-stopper games, stochastic differential games, impulse controls, quasi-variational inequalities, Nash equilibrium. \medskip

\noindent\textbf{MSC 2010 classification:} 91A15, 91B70, 93E20.
\end{abstract}

\section{Introduction}
Controller-stopper games are two-player stochastic dynamic games whose payoffs depend on the evolution over time of some state variable, one player can control its dynamics while the other player can stop the game. The study of these games  started with Maitra and Sudderth's work \cite{maitra1996gambler} on a zero-sum discrete time setting. Later on, many authors investigated such games in continuous time, especially in the zero-sum case, while very little has been done in the nonzero-sum. Indeed, apart from Karatzas and Sudderth \cite{karatzas2006stochastic}, 
all the other articles focus on the zero-sum case and in all of them the controller uses regular controls, i.e. absolutely continuous for the Lebesgue measure. Here we mention Karatzas and Sudderth \cite{karatzas2001controller}, who derived the explicit solution for a game with a one-dimensional diffusion with absorption at the endpoints of a bounded interval as a state process; Karatzas and Zamfirescu \cite{karatzas2006martingale, karatzas2008martingale} developed a martingale approach to a general class of controller-stopper games, while Bayraktar and Huang \cite{bayraktar2013multidimensional} showed that the value functions of such games is the unique viscosity solution to an appropriate Hamilton-Jacobi-Bellman equation. Moreover, Hernandez et al. \cite{zervos2015} have analysed the case when the controller plays singular controls and derived a set of variational inequalities characterizing the games value functions.  
On the whole, this class of games is motivated by a variety of applications in finance, insurance and economics. In view of this we quote Bayraktar et al. \cite{BayraktarE.2010Osfd} on convex risk measures, Nutz and Zhang \cite{nutz2015optimal} on sub-hedging of American options under volatility uncertainty, Bayraktar and Young \cite{BayraktarErhan2011Prot} on minimization of lifetime ruin probability and Karatzas and Wang \cite{KaratzasI.2000ABOo} on pricing and hedging of American contingent claims among others.

Here, we consider the case of a controller facing fixed and proportional costs every time he moves the state variable, so that intervening continuously over time is clearly not feasible for him. In this context, the controller will make use of impulse controls, which are sequences of interventions times and corresponding intervention sizes, describing when and by how much will the controlled process be shifted. This kind of controls look like the natural choice in many concrete applications, from finance to energy markets and to real options. For this reason, they have been experiencing a comeback due to a demand for more realistic financial models (e.g. fixed transaction costs and liquidity risk), see for instance \cite{ Basei18, bruder2009impulse,cadenillas1999optimal, chen2013, chevalier2016optimal, vath2007model}. 

Impulse controls have been studied in stochastic differential games as well and, as in the controller-stopper case, most of the research has been done in the zero-sum framework. For this reason, it is worth mentioning the work by A\"id et al. \cite{aid2016nonzero}, who developed a general model for non-zero sum impulse games implementing a verification theorem which provides an appropriate system of quasi-variational inequalities for the equilibrium payoffs and related strategies of the two players. Thereafter, Ferrari and Koch \cite{ferrari2017strategic} produced a model of pollution control where the two players, the regulator and the energy producer, are assumed to face proportional and fixed costs and, as such, play an impulse nonzero-sum game which admits an equilibrium under some suitable conditions. Lastly, Basei et al. \cite{Basei19} studied the mean field game version of the nonzero-sum impulse game in \cite{aid2016nonzero} and proved the existence of $\epsilon$-Nash equilibrium for the corresponding $N$-player game. Regarding the zero-sum case, here we quote Cosso \cite{cosso2013stochastic}, who examined a finite time horizon two-player game where both players act via impulse control strategies and showed that such games have a value which is the unique viscosity solution of the double-obstacle quasi-variational inequality. Furthermore, Azimzadeh \cite{azimzadeh2017zero} considered an asymmetric setting with one participant playing a regular control while the opponent is playing an impulse control with precommitment, meaning that at the beginning of the game the maximum number of impulses is declared, and proved that such a game has a value in the viscosity sense.

This paper is at the crossroad of the two streams of research we have discussed above: stopper-controller games and impulse games. Indeed, we study an impulse controller-stopper nonzero-sum game, focusing on the mathematical properties of Nash equilibria, while application to economics and finance are postponed to future research.
Turning to the game's description, we consider a nonzero-sum stochastic differential game between two players, P1 and P2, where P1 can use impulse controls to affect a continuous-time stochastic process $X$ while P2 can stop the game at any time. When P1 does not intervene, we assume $X$ to diffuse according to a time homogeneous multidimensional diffusion process. Moreover, every time P1 intervenes she faces a cost, say $\phi(X_{\tau_n -}, \delta_n)$ which may depend on the impulse size and on the state variable right before intervention. At the same time P2 can get something, say $\psi(X_{\tau_n -}, \delta_n)$. P1's strategy is any sequence $u=(\tau_n,\delta_n)_{n \ge 1}$, of intervention times, $\tau_n$, and corresponding impulses $\delta_n$, whereas P2's strategy is any $[0,\infty]$-valued stopping time $\eta$. Both players want to maximize their expected payoffs which are defined for every initial state $x \in \mathbb{R}^d$ and every couple $(u, \, \eta)$ as follows: 
\begin{align*}
J_1(x; u, \eta) &= \mathbb E \left[ \int_0 ^\eta e^{-r_1  t} f (X_t ^{x,u}) dt - \sum_{n: \tau_n \leq \eta}e^{-r_1 \tau_n}\phi(X_{\tau_n -}, \delta_n) + e^{-r_1 \eta} h(X_{\eta})\mathbf 1_{(\eta < \infty)}  \right]  ,\\
J_2(x; u, \eta) &= \mathbb E \left[ \int_0 ^\eta e^{-r_2 t} g (X_t ^{x,u}) dt + \sum_{n: \tau_n \leq \eta}e^{-r_2 \tau_n} \psi(X_{\tau_n -}, \delta_n) + e^{-r_2 \eta} k(X_{\eta})\mathbf 1_{(\eta < \infty)}  \right] ,
\end{align*}
so apart from the costs (and gains) induced by the intervention of P1, the players have running as well as terminal gains.
We are going to adopt a PDE-based approach to characterize the Nash equilibria of this game, identifying a suitable system of quasi-variational inequalities (QVI's, for short) whose solution will give equilibrium payoffs. In this setting, the relevant QVI's system is as follows
\begin{align*} 
&\mathcal{M}V_1 - V_1 \leq 0 & \mbox{ in } \mathbb{R}^d \\
&V_2 - k \geq 0& \mbox{ in } \mathbb{R}^d \\
&\mathcal{H}V_2 - V_2 = 0& \mbox{ in } \{  \mathcal{M}V_1 - V_1 = 0 \} \\
&V_1 = h & \mbox{ in } \{V_2 =k \} \\
&\max\{ \mathcal{A}V_1 - r_1V_1 + f, \mathcal{M}V_1 - V_1 \} = 0 & \mbox{ in } \{ V_2 > k \} \\ 
&\max\{ \mathcal{A}V_2 - r_2V_2 + g, k -  V_2 \} = 0 & \mbox{ in } \{ \mathcal{M}V_1 - V_1  < 0 \}  
\end{align*}
where $\mathcal{M}$ and $\mathcal{H}$ are suitable intervention operators defined in Section \ref{qvisec}. One of the main contributions of this paper consists in the Verification Theorem \ref{oksendal-sulem verification} establishing that if two functions $V_1$ and $V_2$ are regular enough and they are solution to the system above, 
then they coincide with some equilibrium payoff functions of the game and a characterization of the related equilibrium strategies is possible. \medskip

Furthermore, building on the verification theorem, we present an example of solvable impulse controller and stopper game. More in detail, we consider a game with a one-dimensional state variable $X$, modelled as a real-valued (scaled) Brownian motion. Both players have linear running payoffs. When P1 intervenes, he faces a penalty while P2 faces a gain, both characterized by a fixed and a variable part, proportional to the size of the impulse. Moreover, when P2 stops the game, he may suffer a loss proportional to the state variable, while P1 might gain something proportional to $X$ as well. Hence, the players objective functions are:
\begin{align*}
J_1(x; u, \eta) &= \mathbb{E}_x \left[ \int_0^\eta e^{-r t} (X_t - s) dt - \sum_{n: \tau_n \leq \eta} e^{-r \tau_n} (c + \lambda \abs{\delta_n}) + a e^{- r \eta}  X_{\eta}\mathbbm{1}_{\{\eta < \infty\}}\right], \\
J_2(x; u, \eta) &= \mathbb{E}_x \left[ \int_0^\eta e^{-r t} (q - X_t ) dt + \sum_{n: \tau_n \leq \eta} e^{-r \tau_n} (d + \gamma \abs{\delta_n}) - b e^{- r \eta}X_{\eta}\mathbbm{1}_{\{\eta < \infty\}}\right] ,
\end{align*}
with suitable coefficients satisfying some assumptions that will be specified later.
Some preliminary heuristics on the QVIs above leads us to consider two pairs of candidates for the functions $V_i$. Then, a careful application of the verification theorem shows that such candidates actually coincide with some equilibrium payoff functions. In particular, we are able to identify two kinds of Nash equilibria, both of threshold type, that can be shortly described as follows:
\begin{enumerate}
\item in the first type of equilibrium, P1 intervenes when the state $X$ is smaller than some threshold $\bar x_1$ and moves the process to some endogenously determined target $x_1^*$, while P2 terminates the game when the state $X$ is bigger than some $\bar x_2$; in this kind of equilibrium the optimal target of P1, $x_1 ^*$, is strictly smaller than $\bar x_2$, so the two players intervene separately.
\item In the second type, P1 intervenes when the state $X$ is smaller than some (possibly different) threshold $\bar x_1$ and move the state variable to the intervention region of P2, who is then forced by P1 to end the game. In this case, players' interventions are simultaneous.
\end{enumerate}
We provide quasi-explicit expressions for the value functions and for the thresholds $\bar x_i$, $x_1^*$ for both equilibria. Finally, we perform some numerical experiments providing several cases when one of the two equilibria emerges. The question if there are cases when the two types of equilibria can coexist is still open. 

The paper is organised as follows. Section \ref{gamesetting} gives the general formulation of impulse controller and stopper game, in particular the notion of admissible strategies, and more importantly we state and prove a verification theorem giving sufficient condition in terms of the system of QVIs for a given couple of payoffs to be a Nash equilibrium. In Section \ref{game} we consider the one-dimensional example with linear payoffs and provide quasi-explicitly characterizations for the two types of Nash equilibria sketched above. Finally, some numerical experiments illustrate the qualitative behaviour of such equilibria. 

\section{Description of the game}\label{gamesetting}

In this section we have gathered all main theoretical results on a general class of nonzero-sum impulse controller and stopper games. We start with a detailed description of the game, together with all technical assumptions and the definition of admissible strategies.

Let $( \Omega, \mathbb{F}, \mathbb{P})$ be a probability space equipped with a complete and right-continuous filtration $\mathbb{F} = (\mathcal{F}_t)_{t \geq 0}$. 
On this space we consider the uncontrolled state variable $X\equiv X^x$ defined as solution of the following time-homogeneous SDE:
\begin{equation}\label{dynamic}
 dX_t = b(X_t)dt + \sigma(X_t)dW_t , \quad X_{0^-} =x,
 \end{equation}
where $(W_t)_{t\geq0}$ is an $\mathbb{F}$-Brownian motion and the coefficients $b: \mathbb{R}^d \rightarrow \mathbb{R}^d$ and $\sigma: \mathbb{R}^d \rightarrow \mathbb{R}^{d\times k}$ are assumed to be globally Lipschitz continuous, i.e. there exists a constant $C>0$ such that for all $x_1, \, x_2 \in \mathbb{R}^d$ we have:
$$
\abs{b(x_1) - b(x_2)} + \abs{\sigma(x_1) - \sigma(x_2)} \leq C\abs{x_1 - x_2},
$$
so that existence of a unique strong solution is granted and $X$ is well-defined.
We consider two players, that we call P1 and P2. Equation \eqref{dynamic} describes the evolution of the state process in case of no intervention from both players. During the game, P1 can affect $X$'s dynamics applying some impulse $\delta \in Z$ in an additive fashion, moving the state variable from $X_{\tau -}$ to its new value $X_{\tau} = X_{\tau^-} +  \delta$, where $\tau$ denotes the intervention time. The controlled state variable will be denoted by $X^{x,u}$:
$$
X_t^{x, u} = x + \int_0^t b(X_s^{x, u}) ds + \int_0^t \sigma(X_s^{x, u}) dW_s + \sum_{n: \tau_n \leq t} \delta_n , \quad t \ge 0.
$$
On the other hand, P2 can stop the game by choosing any stopping time $\eta$ with values in $[0,\infty]$. We, now, give a proper definition of such strategies.
\begin{definition} Let $Z \subseteq \mathbb{R}^d$ be given. P1's strategy is any sequence $u=(\tau_n, \delta_n)_{n \geq 0}$, where $(\tau_n)_{n \geq 0}$ is a sequence of stopping times such that $0 = \tau_0 < \tau_1 < \tau_2<\ldots<\tau_n \uparrow \infty$ and $\delta_n \in L^0(\mathcal{F}_{\tau_n})$ with $\delta_n: \mathbb{R}^d \rightarrow Z$. P2's strategy is any stopping time $\eta \in \mathcal{T}$, where $\mathcal{T}$ is the set of all $[0,\infty]$-valued $\mathbb{F}$-stopping times. 
\end{definition}

\begin{remark}
We observe that simultaneous interventions are possible in this game. This is in contrast with games where both players intervenes with impulses, where simultaneous interventions are usually not allowed since they would be very difficult to handle with from a modelling perspective (cf. \cite{aid2016nonzero}). On the other hand here, due to the different nature of the strategies for the two players, one can safely allow for simultaneous actions. This has an interesting consequence on our analysis, as we will see in the linear game of the next section that at least two types of Nash equilibria are possible and in one of them P1 induces P2 to stop instantaneously.  
\end{remark}

The players want to maximize their respective objectives, featuring each of them three discounted terms: a running payoff, P1's intervention cost/gain and a terminal payoff. The players' discount factors can be different of each other. More precisely, for each $i=1,2$, $r_i > 0$ denotes the discount rate of player $i$, $f,g: \mathbb{R}^d \rightarrow \mathbb{R}$ are their running payoffs, $h,k: \mathbb{R}^d \rightarrow \mathbb{R}$ their terminal payoffs and $\phi, \psi:\mathbb{R}^d \times Z \rightarrow \mathbb{R}$ are the intervention cost and gain, respectively. Throughout the whole paper, we will work under the assumption that all these functions are continuous.
Hence, we can define the payoffs as follows.
\begin{definition}
Let $x \in \mathbb{R}^d$, let $(u, \eta)$ be a pair of strategies. Provided that the right-hand sides exist and are finite we set: 
\begin{align*} 
J_1(x; u, \eta) &=\mathbb{E}_x \left[ \int_0 ^\eta e^{-r_1 t} f (X_t ^{x, u}) dt - \sum_{n: \tau_n \leq \eta}e^{-r_1\tau_n}\phi(X^{x, u}_{\tau_n^-}, \delta_n) + e^{-r_1\eta} h(X^{x, u}_{\eta})\mathbbm{1}_{(\eta < \infty)}  \right]  \\
J_2(x; u, \eta) &= \mathbb{E}_x \left[ \int_0 ^\eta e^{-r_2 t} g (X_t ^{x, u}) dt + \sum_{n: \tau_n \leq \eta}e^{-r_2 \tau_n} \psi(X^{x, u}_{\tau_n^-}, \delta_n) + e^{-r_2 \eta} k(X_{\eta}^{x, u})\mathbbm{1}_{(\eta < \infty)}  \right] ,
\end{align*}
\end{definition}
where the subscript in the expectation denotes the conditioning with respect to the starting point.

In order for $J_1$ and $J_2$ to be well defined, we now introduce the set of admissible strategies.
\begin{definition}\label{admissibility} Let $x \in \mathbb{R}^d$ be some initial state and let $(u ,\eta)$ be some strategy profile. We say that the pair $(u, \eta)$ is $x$-admissible if:
\begin{enumerate}
\item the following random variables are all in $L^1(\Omega)$:
\begin{eqnarray*}
\int_0^\infty e^{- r_1 t} \abs{f(X_t^{x, u})} dt, && \int_0^\infty e^{- r_2 t} \abs{g(X_t^{x, u})} dt,\\
e^{- r_1\eta} \abs{h(X_{\eta}^{x, u})}, && e^{- r_2\eta} \abs{k(X_{\eta}^{x, u})}, \\
\sum_{k: \tau_k  \leq \infty} e^{- r_1\tau_k} \abs{\phi(X_{\tau_k^-}^{x,u}, \delta_k)}, && \sum_{k: \tau_k  \leq \infty} e^{- r_2\tau_k} \abs{\psi(X_{\tau_k^-}^{x, u}, \delta_k)};
\end{eqnarray*}
\item for each $p \in \mathbb{N}$, the random variable $\norm{X^{x, u}}_\infty = \sup_{t \geq 0} \abs{X_t^{x, u}}$ is in $L^p(\Omega)$.
\end{enumerate}
We denote by $\mathcal{A}_x$ the set of all $x$-admissible pairs.
\end{definition}
\begin{remark} Notice that, as it is formulated above, admissibility is a joint condition on the strategies of both players. Under condition (ii) above and if all functions $f$, $g$, $h$, $k$, $\phi$ and $\psi$ have at most polynomial growth in their respective variables, the set of all jointly admissible strategies can be expressed as $\mathcal{A}_x^1 \times \mathcal{A}_x^2 = \mathcal{A}_x$, where $\mathcal{A}_x^i$ denotes Pi's set of (individually) admissible strategies for $i=1,2$, and is defined as follows: $\mathcal A_x ^1$ is the set of all P1's strategies $u=(\tau_n, \delta_n)_{n \geq 0}$ such that $\sum_{n \ge 0} |\delta_n| \in L^p (\Omega)$ for all $p \ge 1$, while $\mathcal A_x ^2$ is the set of all $[0,\infty]$-values stopping times.

Indeed, for P1's strategies for instance, using classical a-priori $L^p$-estimates of the (uncontrolled) state variable, there exists a constant $c>0$ such that
\begin{align*}
\mathbb{E}\left[ e^{- r_1 \eta} \abs{h(X_{\eta})}\right] \leq  c \mathbb{E} \left[ e^{- r_1 \eta} (1 + \abs{X_{\eta}}^p)\right] 
\leq c(1+\mathbb{E}[ \norm{X}^p _\infty ] )< \infty .
\end{align*}
Moreover, similar estimates can be performed for the other expectations in Definition \ref{admissibility}(i).
\end{remark}
 
We conclude this section with the classical definition of Nash equilibrium and the corresponding equilibrium payoffs.
\begin{definition}{(Nash Equilibrium)} Given $x \in \mathbb{R}^d$, we say that $(u, \eta) \in \mathcal{A}_x$ is a Nash equilibrium if 
\begin{align*} 
J_1(x; u^*, \eta^*) &\geq J_1(x; u, \eta^*) ,\quad \text{for all $u$ s.t.} \; (u, \eta^*) \in \mathcal{A}_x ,\\ 
J_2(y; u^*, \eta^*) &\geq J_2(x; u^*, \eta) , \quad \text{for all $\eta$ s.t.} \; (u^*, \eta) \in \mathcal{A}_x
.\end{align*}
Finally, the equilibrium payoffs of any Nash equilibrium  $(u^*, \eta^*) \in \mathcal{A}_x$ are defined as  
$$
V_i(x):=J_i(x; u^*, \eta^*), \quad i=1,2.
$$
\end{definition}

\subsection{The system of quasi-variational inequalities} \label{qvisec}

Now, we are going to introduce the differential problem that will be satisfied by the equilibrium payoff functions of our game.
Let $V_1, V_2 : \mathbb R^d \to \mathbb R$ be two measurable functions such that
\begin{equation} \label{deltamax}
\{ \delta(x) \} = \argmax_{\delta \in Z} \{ V_1(x + \delta) - \phi(x, \delta)\}, \quad x \in \mathbb{R}^d,
\end{equation}
for some measurable function $\delta : \mathbb{R}^d \to Z$. Moreover we define the following two intervention operators:
\begin{align}
\mathcal{M}V_1(x) &= V_1(x+\delta(x)) - \phi(x, \delta(x))\label{intervention cost operator} , \\
\mathcal{H}V_2(x) &= V_2(x+ \delta(x)) + \psi(x, \delta(x))\label{intervention gain operator} ,
\end{align}
for each $x \in \mathbb{R}^d$. 

The expressions in \eqref{deltamax}, \eqref{intervention cost operator} and \eqref{intervention gain operator} have the following natural interpretation:
\begin{itemize}
\item[\eqref{deltamax}] let $x$ be the current state of the process, if P1 intervenes immediately with impulse $\delta(x)$, P1's payoff after intervention changes to $V_1(x+\delta(x)) - \phi(x, \delta(x))$, given by the payoff in the new state minus the intervention cost. Therefore, $\delta(x)$ in \eqref{deltamax} is the optimal impulse that P1 would apply in case of intervention. 
\item[\eqref{intervention cost operator}] $\mathcal{M}V_1(x)$ represents P1's payoff just after her intervention.
\item[\eqref{intervention gain operator}] similarly, $\mathcal{H}V_2(x)$ represents P2's payoff following P1's intervention.
\end{itemize}
Moreover, for any $V \in C^2(\mathbb{R}^d)$ we can consider the infinitesimal generator of the uncontrolled state variable $X$:
$$
\mathcal{A}V =b \cdot \nabla V + \frac{1}{2} \text{tr}(\sigma \sigma^t D^2 V),
$$
where $b, \sigma$ are as in \eqref{dynamic}, $\sigma^t$ denotes the transposed of $\sigma$, $\nabla V$ and $D^2 V$ are the gradient and the Hessian matrix of $V$, respectively. We are interested in the following quasi-variational inequalities (QVI's, for short) for $V_1, V_2$:
\begin{align}
&\mathcal{M}V_1 - V_1 \leq 0 & \mbox{ everywhere} \label{P1 intervention}\\
&V_2 - k \geq 0& \mbox{ everywhere} \label{P2 intervention} \\
&\mathcal{H}V_2 - V_2 = 0& \mbox{ in } \{  \mathcal{M}V_1 - V_1 = 0 \} \label{P2 no deviation}\\
&V_1 = h & \mbox{ in } \{V_2 =k \} \label{boundary}\\
&\max\{ \mathcal{A}V_1 - r_1V_1 + f, \mathcal{M}V_1 - V_1 \} = 0 & \mbox{ in } \{ V_2 > k \} \label{P1 control}\\ 
&\max\{ \mathcal{A}V_2 - r_2V_2 + g, k -  V_2 \} = 0 & \mbox{ in } \{ \mathcal{M}V_1 - V_1  < 0 \}  \label{P2 control}
\end{align}
Each part of the QVI's system above can be interpreted in the following way: 
\begin{itemize}
\item[\eqref{P1 intervention}] it means that is not always optimal for P1 to intervene and it is a standard condition in impulse control theory \cite{cadenillas1999optimal, bertola2016classical};
\item[\eqref{P2 intervention}] if the current state is $x$ and P2 chooses to stop the game, i.e. $\eta = 0$, he gains $k(x)$ and since this is a suboptimal strategy, we have $V_2(x) \geq k(x)$ for all $x \in \mathbb{R}^d$;
\item[\eqref{P2 no deviation}] by definition of Nash equilibrium we expect that P2 does not lose anything when P1 intervenes as in \cite{aid2016nonzero}, otherwise P2 would like to deviate, by contradicting the notion of equilibrium;
\item[\eqref{P1 control}] before P2 stops the game, P1 plays as in a classic impulse control problem (e.g. \cite{cadenillas1999optimal});
\item[\eqref{P2 control}] similarly as above, when P1 does not intervene P2 solves his own optimal stopping problem (e.g. \cite{chen2013nonzero}).
\end{itemize}
After all this preparation, we are ready to move to our main result, which is a verification theorem linking Nash equilibria and solutions to the QVI system above.

\subsection{The verification theorem}

In this subsection, we state and proof our main verification theorem. This result will be key in order to compute Nash equilibria in specific examples.

\begin{theorem}\label{oksendal-sulem verification} 
Let $V_1, V_2 : \mathbb{R}^d \to \mathbb{R}$ be two given functions. Assume that \eqref{deltamax} holds and set 
\[ \mathcal{C}_1:=\{\mathcal{M}V_1 - V_1 <0\}, \quad \mathcal{C}_2:=\{V_2 - k>0\},\] 
with $\mathcal{M}V_1$ as in \eqref{intervention cost operator}. Moreover, assume that:
\begin{itemize}
\item $V_1$ and $V_2$ are solutions of the system of QVIs;
\item $V_i \in C^2(\mathcal{C}_j \setminus \partial \mathcal{C}_i) \cap C^1(\mathcal{C}_j) \cap C(\mathbb{R}^d)$,  
for $i \neq j$, and both functions have at most polynomial growth;
\item $\partial \mathcal{C}_i$ is a Lipschitz surface\footnote{I.e. it is locally the graph of a Lipschitz function.}, and $V_i$'s second order derivatives are locally bounded near $\partial \mathcal{C}_i$ for $i=1,2$.
\end{itemize}
Finally, let $x \in \mathbb{R}^d$ and assume that $(u^*, \eta^*) \in \mathcal{A}_x$, where $u^*=(\tau_n, \delta_n)_{n \geq 1}$ is given by
\[\tau_n = \inf \{ t > \tau_{n-1}: X_t \in  \mathcal{C}_1^c \},\quad \{\delta_n\}  = \argmax_{\delta \in Z} \{ V_1(X_{\tau_n ^-} + \delta) - \phi(X_{\tau_n ^-}, \delta)\}, \quad n \ge 0,\]
and
\[ \eta^* =  \inf \{ t \geq 0: V_2(X_t)=k(X_t)\},\]
with the convention $\tau_0 =0$.
Then, $(u^*, \eta^*)$ is a Nash Equilibrium and $V_i = J_i(x; u^*, \eta^*)$ for $i =1, 2$.
\end{theorem}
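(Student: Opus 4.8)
The plan is to reduce the Nash property to two separate one-player optimization problems and to verify each via a (generalized) Dynkin formula. Concretely, I would prove the two inequalities
\[ V_1(x) \ge J_1(x;u,\eta^*) \quad\text{and}\quad V_2(x)\ge J_2(x;u^*,\eta) \]
for every admissible deviation $u$, resp.\ $\eta$, together with the two equalities $V_i(x)=J_i(x;u^*,\eta^*)$; combined with the definition of Nash equilibrium these give the claim. Throughout I read $\eta^*$ and $u^*$ as feedback strategies responding to the realized controlled path (P2 stops at the first entry of $X^{x,u}$ into $\{V_2=k\}=\mathcal{C}_2^c$, P1 intervenes at the first exit of $X^{x,u^*}$ from $\mathcal{C}_1$), so that on $[0,\eta^*)$ the controlled state stays in $\mathcal{C}_2$ and between P1's interventions it stays in $\mathcal{C}_1$.

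For P1's problem I would fix $\eta^*$ and apply the generalized Itô formula to $t\mapsto e^{-r_1 t}V_1(X^{x,u}_t)$ on $[0,\eta^*]$, decomposing it into the diffusion integral $\int_0^{\eta^*} e^{-r_1 t}(\mathcal{A}V_1-r_1V_1)(X^{x,u}_t)\,dt$, a local-martingale term, and the jump sum $\sum_{\tau_n\le\eta^*} e^{-r_1\tau_n}\big(V_1(X^{x,u}_{\tau_n})-V_1(X^{x,u}_{\tau_n^-})\big)$. On $[0,\eta^*)$ the state lies in $\mathcal{C}_2=\{V_2>k\}$, where \eqref{P1 control} gives $\mathcal{A}V_1-r_1V_1+f\le 0$; and by \eqref{deltamax}–\eqref{intervention cost operator} together with \eqref{P1 intervention} any impulse satisfies $V_1(X_{\tau_n^-}+\delta_n)-\phi(X_{\tau_n^-},\delta_n)\le \mathcal{M}V_1(X_{\tau_n^-})\le V_1(X_{\tau_n^-})$, so each jump is bounded by $\phi(X_{\tau_n^-},\delta_n)$. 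After localizing, taking expectations, and letting the localization run to infinity, the diffusion and jump bounds turn into the running-cost and intervention-cost terms of $J_1$, while at $\eta^*$ the terminal condition \eqref{boundary} identifies $V_1(X_{\eta^*})=h(X_{\eta^*})$ on $\{V_2=k\}$ and the discount factor $r_1>0$ kills the terminal term on $\{\eta^*=\infty\}$; this yields $V_1(x)\ge J_1(x;u,\eta^*)$. For $u=u^*$ the same computation is an identity: between interventions the state is in $\mathcal{C}_1\cap\mathcal{C}_2$ where, $\mathcal{M}V_1-V_1$ being strictly negative, \eqref{P1 control} forces $\mathcal{A}V_1-r_1V_1+f=0$, and the optimal impulses attain $\mathcal{M}V_1(X_{\tau_n^-})=V_1(X_{\tau_n^-})$ at the intervention boundary, making every inequality tight.

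For P2's problem I would fix $u^*$ and apply the same formula to $e^{-r_2 t}V_2(X^{x,u^*}_t)$ on $[0,\eta]$. Between P1's interventions the state is in $\mathcal{C}_1=\{\mathcal{M}V_1-V_1<0\}$, where \eqref{P2 control} gives $\mathcal{A}V_2-r_2V_2+g\le 0$; at each intervention time $X^{x,u^*}_{\tau_n^-}\in\partial\mathcal{C}_1\subseteq\{\mathcal{M}V_1-V_1=0\}$, so \eqref{P2 no deviation} makes the jump exact, $V_2(X_{\tau_n})-V_2(X_{\tau_n^-})=-\psi(X_{\tau_n^-},\delta_n)$ (using that $u^*$ applies the optimal impulse $\delta(\cdot)$). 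Taking expectations as before and using $V_2\ge k$ everywhere from \eqref{P2 intervention} for the terminal term gives $V_2(x)\ge J_2(x;u^*,\eta)$; at $\eta=\eta^*$ the terminal term is an equality by definition of $\eta^*$, and on $[0,\eta^*)$ the state lies in $\mathcal{C}_1\cap\mathcal{C}_2$ where $V_2>k$ forces the $\mathcal{A}V_2-r_2V_2+g=0$ branch in \eqref{P2 control}, so the inequality is again tight.

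The \textbf{main obstacle} is justifying the Itô step, since $V_i$ is only $C^1$ across the free boundary $\partial\mathcal{C}_i$ and merely $C^2$ away from it. Because $V_i$ is $C^1$ (no jump in the gradient) and its second derivatives are locally bounded near the Lipschitz surface $\partial\mathcal{C}_i$, no local-time term should appear and the classical formula survives with $D^2V_i$ interpreted as the a.e.\ Hessian: I would make this rigorous by mollifying $V_i$ into $V_i^\varepsilon\in C^2$, applying the standard Itô formula, and passing to the limit—the first-order terms converge by local uniform $C^1$ convergence, and the bounded second-order terms converge by dominated convergence once one checks that the occupation measure of $X$ does not charge the Lebesgue-null set $\partial\mathcal{C}_i$. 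The remaining technical points are routine: a localization argument removes the local-martingale contribution, and the admissibility conditions of Definition \ref{admissibility} (integrability of all payoff terms, $\|X^{x,u}\|_\infty\in L^p$) together with the polynomial growth of $V_i$ and $r_i>0$ justify passing to the limit in the localization and discarding the terminal term when the relevant time is infinite.
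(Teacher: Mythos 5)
Your proposal follows essentially the same route as the paper's proof: the same decomposition into two deviation inequalities plus the equalities for $(u^*,\eta^*)$, each established via It\^o's formula with localization, the QVI system supplying the drift bounds and jump bounds/identities, dominated convergence under the admissibility conditions, and the same mollification argument for the limited regularity across $\partial\mathcal{C}_i$ that the paper delegates to the references. The only differences are cosmetic --- for instance, you attribute the drift inequality for $V_1$ to \eqref{P1 control} and that for $V_2$ to \eqref{P2 control}, which is indeed the correct attribution (the paper's text swaps these two labels, evidently a typo), and you invoke \eqref{P2 intervention} rather than \eqref{boundary} for P2's terminal term, again the correct reference.
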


\begin{remark} First, we stress that, unlike usual impulse control problems, the candidates $V_1,V_2$ are not required to be twice differentiable everywhere, but only in $ \{ V_2 > k \}$ and $\{ \mathcal{M}V_1 - V_1  < 0 \}$ respectively. Moreover, we observe that for the equilibrium strategies in the theorem above the right-continuity of $(X_t^{x; u})_{t \geq 0}$ implies the following:
\begin{align}
&(\mathcal{M}V_1 - V_1)(X_s^{x, u^*})<0, &  \label{optimal intervention operator}\\
&\delta_k = \delta(X_{\tau_k^-}^{x, u^*}),& (\mbox{where } \delta(\cdot) \mbox{ is as in \eqref{deltamax}}) \label{optimal impulse}\\
&(\mathcal{M}V_1 - V_1)(X_{\tau_k^-}^{x, u^*})=0,& (\mathcal{H}V_2 - V_2)(X_{\tau_k^-}^{x, u^*})=0 \label{optimal intervention time}\\
&(V_2-k)(X_{\eta^*}) = 0, & (\mbox{on } \{\eta^* < \infty\}) \label{optimal stopping time}\\
&(V_2-k)(X_s)>0, & \mbox{ (when P2 plays $\eta^*$) }  \label{optimal stopping operator}
\end{align}
for every strategies $u$ and $\eta$ such that both $(u^*, \eta)$, $(u, \eta^*)$ belong to $\mathcal{A}_x$, for every $s \in [0, \eta)$ and every $\tau_k < \infty$.
\end{remark}

\begin{proof}
Let $V_i(x) = J_i(x; u^*, \eta^*)$ for $i=1,2$. By definition of Nash Equilibrium we have to prove that $V_1(x) \geq J_1(x; u, \eta^*)$ and $V_2(x) \geq J_2(x; u^*, \eta)$ for every $(u, \eta)$ such that both $(u^*, \eta)$, $(u, \eta^*)$ belong to $\mathcal{A}_x$. The proof is performed in three steps.\medskip

\noindent\emph{Step 1}: We show that $V_1(x) \geq J_1(x; u, \eta^*)$. Let $u$ be a strategy such that $(u, \eta^*) \in \mathcal{A}_x$. Thanks to the regularity assumptions and by approximation arguments of Theorem 2.1 in \cite{oksendal2005applied} (for more details see the proof of Theorem 3.3 in \cite{aid2016nonzero}), we can assume without loss of generality that $V_1 \in C^2(\mathcal{C}_2) \cap C(\mathbb{R}^d)$. 
For each $r > 0$ and $n \in \mathbb{N}$, we set $$
\tau_{r,n} = \tau_r \wedge n \wedge \eta^*
$$
with $\tau_r = \inf \{s >0 : X_s \not\in B(x,r)\}$, where $B(x,r)$ is an open ball with radius $r$ and centre in $x$. As usual, we adopt the convention $ \inf \emptyset = + \infty$. Applying It\^o's formula to $e^{-r_1s}V_1(X_s)$ between time zero and $\tau_{r,n}$ and taking conditional expectations on both sides give
$$
V_1(x) = \!\mathbb{E}_x \!\left[ e^{- r_1 \tau_{r, n}} V_1(X_{\tau_{r,n}}) - \int_0^{\tau_{r,n}} e^{- r_1 s} (\mathcal{A}V_1 - r_1V_1)(X_s)ds - \!\!\sum_{k: \tau_k \leq \tau_{r,n}}\!\! e^{- r_1 \tau_k}( V_1(X_{\tau_k})- V_1(X_{\tau_k^-}))\right].
$$ 
From \eqref{P2 control} it follows that
$$ (\mathcal{A}V_1 - r_1 V_1)(X_s) \leq - f(X_s)$$
for all $s \in [0, \eta^*)$. Moreover, using \eqref{P1 intervention} we also have:
\begin{align*}
V_1(X_{\tau_k^-}) \geq \mathcal{M}V_1(X_{\tau_k^-}) \geq V_1(X_{\tau_k^-}+ \delta) - \phi(X_{\tau_k^-}, \delta) = V_1(X_{\tau_k})- \phi(X_{\tau_k^-}, \delta).
\end{align*}
Therefore, 
$$V_1(x) \geq \mathbb{E}_x\left[ e^{-r_1 \tau_{r, n}} V_1(X_{\tau_{r,n}}) + \int_0^{\tau_{r,n}} e^{-r_1s}f(X_s)ds - \sum_{k: \tau_k \leq \tau_{r,n}} e^{- r_1 \tau_k} \phi(X_{\tau_k^-}, \delta_k)\right].$$
Observe that by admissibility we have
$$V_1(X_{\tau_{r,n}}) \leq C\left(1+ \abs{X_{\tau_{r,n}}}^p\right) \leq C\left(1+\norm{X}^p_\infty \right) \in L^1(\Omega),$$
for some constants $C>0$ and $p \in \mathbb{N}$. Thus, we can use dominated convergence theorem and pass to the limit, first as $r \rightarrow \infty$ and then for $n \rightarrow \infty$. Finally, because of \eqref{boundary}, we obtain
$$V_1(x) \geq \mathbb{E}_x\left[  \int_0^{\eta^*} e^{-r_1 s}f(X_s)ds - \sum_{k: \tau_k \leq \eta^*} e^{- r_1 \tau_k} \phi(X_{\tau_k^-}, \delta_k) + e^{- r_1 \eta^*} h(X_{\eta^*}) \mathbbm{1}_{\{\eta^* <\infty\}} \right] = J_1(x; u, \eta^*).$$

\noindent\emph{Step 2}: We show that $V_2(x) \geq J_2(x; u^*, \eta)$.  Let $\eta$ be a $[0,\infty]$-valued stopping time such that $(u^*, \eta) \in \mathcal{A}_x$. Thanks to regularity assumptions and by the same approximation argument as before, we can assume again without loss of generality that $V_2 \in C^2(\mathcal{C}_1) \cap C(\mathbb{R}^d)$. Arguing exactly as in Step 1 we obtain
$$
V_2(x) \!=\! \mathbb{E}_x \!\left[ e^{- r_2 \tau_{r, n}} V_2(X_{\tau_{r,n}})\! - \!\int_0^{\tau_{r,n}} e^{- r_2 s} (\mathcal{A}V_2\! - \!r_2V_2)(X_s)ds -\!\! \sum_{k: \tau_k \leq \tau_{r,n}} \!\!e^{- r_2 \tau_k}\left( V_2(X_{\tau_k})\!-\! V_2(X_{\tau_k^-})\right)\right],
$$ 
for the localizing sequence $\tau_{r,n} = \tau_r \wedge n \wedge \eta$ ($r > 0$, $n \in \mathbb{N}$), where $\tau_r = \inf \{s >0 : X_s \not\in B(x,r)\}$.
From \eqref{P1 control} it follows that
$$ (\mathcal{A}V_2 - r_2 V_2)(X_s) \leq - g(X_s)$$
for all $s \in [0, \eta)$. Moreover, due to \eqref{P2 no deviation} and \eqref{optimal intervention time} we obtain
\begin{align*}
V_2(X_{\tau_k^-}) = \mathcal{H}V_2(X_{\tau_k^-}) =V_2((X_{\tau_k^-} + \delta_k) + \psi((X_{\tau_k^-}, \delta_k) =V_2(X_{\tau_k})+\psi(X_{\tau_k^-}, \delta_k).
\end{align*}
Then,
$$V_2(x) \geq \mathbb{E}_x\left[ e^{-r_2 \tau_{r, n}} V_2(X_{\tau_{r,n}}) + \int_0^{\tau_{r,n}} e^{-r_2s}g(X_s)ds + \sum_{k: \tau_k \leq \tau_{r,n}} e^{- r_2 \tau_k} \psi(X_{\tau_k^-}, \delta_k)\right]$$
and as before we can use dominated convergence theorem and pass to the limit so that using \eqref{boundary} we obtain 
 $$V_2(x) \geq \mathbb{E}_x\left[  \int_0^{\eta} e^{-r_1 s}g(X_s)ds + \sum_{k: \tau_k \leq \eta} e^{- r_1 \tau_k} \psi(X_{\tau_k^-}, \delta_k) + e^{- r_2 \eta} k(X_{{\eta}}) \mathbbm{1}_{\{\eta <\infty\}} \right] = J_2(x; u^*, \eta).$$

\noindent\emph{Step 3}: Let $V_1(x) = J_1(x; u^*, \eta^*)$. We argue as in Step 1, with equalities instead of inequalities by the property of $u^*$. Similarly for P2 with $V_2(x) = J_2(x; u^*, \eta^*)$.
\qed
\end{proof}

\section{An impulse controller-stopper game with linear payoffs}  \label{game}

In the next sections \ref{example setting}-\ref{numerics} we provide an application of the Verification Theorem \ref{oksendal-sulem verification} to an impulse game with a one-dimensional state variable evolving essentially as a Brownian motion, which can be shifted by P1's impulses and stopped by P2, and where both players want to maximise linear payoffs. We find two types of Nash equilibria for this game, depending on whether P1 finds it convenient or not to force P2 to stop the game. For both types, we provide quasi-explicit expressions for the equilibrium payoff functions and related strategies. Our findings will be illustrated by some numerical examples.
 
\subsection{Setting} \label{example setting}

We are in a more specific setting than before. This time, the state  variable is one-dimensional, while the players have the following linear payoffs for $x \in \mathbb R$:
\begin{eqnarray*}
f(x) = x - s, & \phi(x) = c + \lambda \abs{\delta}, & h(x)= a x, \\
g(x)= q - x, & \psi(x)= d + \gamma \abs{\delta}, & k(x) =  - b x, 
\end{eqnarray*}
with $s, \, c,\, \lambda, \, a, \, q, \, d, \, \gamma , \, b$ positive constants fulfilling
\begin{equation}\label{ass-ab}
a < \lambda \qquad \mbox{ and } \qquad b < \gamma .
\end{equation}
Hence, given an initial state $x$ and an impulse strategy $u= (\tau_n, \delta_n)_{n\geq 1}$, we define the controlled process $ X_t^{x;u}$ as
$$
X_t = X_t^{x;u} = x + \sigma W_t + \sum_{n:\tau_n \leq t} \delta_n, \qquad t \geq 0,
$$
where $W$ is a standard one dimensional Brownian motion and $ \sigma >0 $ is a fixed parameter. Moreover, we assume that the two players have the same discount factor $r_1 = r_2 = r$ such that 
 \begin{equation}\label{ass-r}
 1 - \lambda r > 0 \qquad \mbox{ and } \qquad 1 - br > 0.\end{equation}
The players' payoff functions are given by
\begin{align*}
J_1(x; u, \eta) &= \mathbb{E}_x \left[ \int_0^\eta e^{-r t} (X_t - s) dt - \sum_{n: \tau_n \leq \eta} e^{-r \tau_n} (c + \lambda \abs{\delta_n}) + a e^{- r \eta}  X_{\eta}\mathbbm{1}_{\{\eta < \infty\}}\right], \\
J_2(x; u, \eta) &= \mathbb{E}_x \left[ \int_0^\eta e^{-r t} (q - X_t ) dt + \sum_{n: \tau_n \leq \eta} e^{-r \tau_n} (d + \gamma \abs{\delta_n}) - b e^{- r \eta}X_{\eta}\mathbbm{1}_{\{\eta < \infty\}}\right] .
\end{align*}
Therefore in this game P1 can shift the state variable $X$ by intervening with impulses in order to keep it high enough, while paying some costs at each intervention time, until the end of the game, which is decided by P2. In addition to that P2, who want to keep $X$ low, might gain something each time P1 intervenes. At the end of the game, P1 (resp. P2) receives (resp. looses) some amount proportional to $X$. Hence, depending on whether her terminal payoff is high enough, P1 might want to end the game soon, by forcing P2 to do that. \medskip

Our goal is to find some Nash equilibrium by solving the QVI problem in \eqref{P1 intervention}-\eqref{P2 control}. More specifically, a heuristic analysis of the QVI system will help us finding a couple of quasi-explicit candidates $W_1, \, W_2$ for the equilibrium payoff functions of the game $V_1, \, V_2$. We recall the optimal impulse size and the intervention operators in this setting 
\begin{align*}
\{ \delta(x) \} &= \argmax_{\delta \in Z} \left\{ W_1(x+ \delta) - c - \lambda \abs{\delta}\right\},\\
\mathcal{M}W_1(x) &= W_1(x+ \delta(x)) - c - \lambda \abs{\delta(x)},\\
\mathcal{H}W_2(x) &= W_2 (x+ \delta(x)) + d + \gamma \abs{\delta(x)},
\end{align*}
together with the infinitesimal generator of the uncontrolled state variable
$$
\mathcal{A}W(x) = \frac{1}{2} \sigma^2 W''(x), \quad x \in \mathbb R.
$$
Before giving the QVI system in this case, let us introduce the continuation regions for both players
\begin{align*} 
\mathcal C_1 &= \{x \in \mathbb{R} : W_1(x + \delta(x)) - c - \lambda\abs{\delta(x)} < W_1(x) \},\\
\mathcal C_2 &= \{x \in \mathbb{R}: W_2(x) + bx = 0\},
\end{align*}
so that the respective intervention regions are given by $\mathcal C_i ^c$ for $i=1,2$. 
Now, the QVIs system becomes
$$
\begin{dcases}
W_1(x+ \delta(x)) - c - \lambda\abs{\delta(x)} - W_1(x) \leq 0, & x \in \mathbb R,\\
W_2(x) - bx \geq 0, \quad & x \in \mathbb R,\\
W_2(x+\delta(x)) + d + \gamma\abs{ \delta(x)} - W_2(x) = 0, & x \in \mathcal C_1 ^c, \\
W_1(x)  - ax = 0, & x \in \mathcal C_2 ^c ,\\
\max\left\{\frac{\sigma^2}{2}{W''_2}(x) - rW_2(x) + q - x,   -x b - W_2(x)\right\}=0, & x \in \mathcal C_1,\\
\max \left\{\frac{\sigma^2}{2}{W''_1}(x) - rW_1(x) + x - s,  (\mathcal{M}W_1 - W_1)(x)\right\} =0, & x \in \mathcal C_2.
\end{dcases}
$$
A first look at the system suggests the following representation for $W_1$ and $W_2$:
\begin{align}
W_1(x) &= \left\{\begin{array}{ll}
a x & x \in \mathcal C^c _2\\ 
\varphi_1 (x) & x \in \mathcal C_1 \cap \mathcal C_2 \\ 
\mathcal{M}W_1(x) & x \in \mathcal C_1 ^c \cap \mathcal C_2 \\ 
\end{array}\right.\label{ex:system1}\\
W_2(x) &= \left\{\begin{array}{ll}
 - bx & x \in \mathcal C_2 ^c\\ 
 \varphi_2 (x) & x \in \mathcal C_1 ^c \cap \mathcal C_2 \\
\mathcal{H}W_2(x) & x \in \mathcal C_1 ^c \cap \mathcal C_2, 
\end{array}\right.\label{ex:system2}
\end{align}
where $\varphi_1$ and $\varphi_2$ are solution to the ODEs
\begin{equation} \frac{1}{2}\sigma^2 \varphi_1''(x) - r \varphi_1(x) + x - s = 0, \quad \frac{1}{2}\sigma^2 \varphi_2''(x) - r \varphi_2(x) + q - x = 0. \label{ode}
\end{equation}
Hence, for each $ x \in \mathbb{R}$, we have:
\begin{equation} \varphi_1(x) = C_{11} e^{\theta x} + C_{12} e^{- \theta x} + \frac{x - s}{r}, \quad 
\varphi_2(x) = C_{21} e^{\theta x} + C_{22} e^{- \theta x} + \frac{q - x}{r},\label{sol-ode}
\end{equation}
where $C_{11}, \, C_{12}, \, C_{21}, \, C_{22}$ are real parameters and $\displaystyle{\theta = \sqrt{2r / \sigma^2}}$.

\subsection{An equilibrium with no simultaneous interventions}
In this subsection we push our heuristics further by focusing on a first type of Nash equilibrium, where simultaneous interventions are not allowed. We mean by that we are looking for an equilibrium of threshold type, where P1 intervenes each time $X$ falls below a certain level, say $\bar x_1$, in which case P1 applies an impulse moving the state variable towards an optimal level $x_1 ^*$ belonging to the continuation region of both players. On the other hand, P2 wait until $X$ is too high for him, i.e. until $X$ crosses some upper level, say $\bar x_2$, at which point P2 decides to stop the game. The heuristics will lead us to propose candidates for the equilibrium payoffs and related strategies, which will be then checked to be the correct ones subject to some additional conditions. Such additional conditions will be checked in some numerical examples.
 
\paragraph{Heuristics.}
Loosely speaking, since P1 is happy when $X$ is high while P2 prefers it to be low, we make the following ansatz about the continuation regions:
\begin{eqnarray*}
&\mathcal C_1^c = (-\infty, \bar x_1] & \mbox{(P1 intervenes)}, \\
&\mathcal C_1 \cap \mathcal C_2 = (\bar x_1, \bar x_2) & \mbox{(no one intervenes)} ,\\
&\mathcal C_2 ^c = [\bar x_2, \infty) & \mbox{(P2 intervenes)}.
\end{eqnarray*}
Hence, we can rewrite \eqref{ex:system1}-\eqref{ex:system2} as 
\begin{align}
W_1(x) &= \left\{\begin{array}{ll} 
a x, & x \in [\bar x_2, \infty) \\
\varphi_1 (x) , & x \in (\bar x_1, \bar x_2) \\
\mathcal{M}W_1(x), & x \in ( -\infty, \bar x_1]
\end{array}\right.\label{system1}\\
W_2(x) &= \left\{\begin{array}{ll}
 - bx , & x \in [\bar x_2, \infty)  \\
\varphi_2 (x), & x \in ( \bar x_1, \bar x_2)\\
\mathcal{H}W_2(x), & x \in ( -\infty, \bar x_1]
\end{array}\right.\label{system2}
\end{align}
Let us find more explicit expressions for the operators $\mathcal{M}W_1$ and $\mathcal{H}W_2$. In this example, it is natural to restrict the analysis to $\delta \geq 0$ since P1 prefers high values of $X^{x,u}$. Hence, whenever he intervenes he will always move the process $X$ to the right, so that
$$
\mathcal{M}W_1(x) = \sup_{\delta \geq 0} \left\{ W_1(x + \delta) - c - \lambda \abs{\delta} \right\} = \sup_{y \geq x} \left\{W_1(y) - c - \lambda(y-x)\right\}.
$$
Here we focus on the case where the maximum point belongs to $(\bar x_1, \bar x_2)$, in other words P1 does not force P2 to stop. In particular, we have $W_1(x_1^*) = \varphi(x_1^*)$ and  
$$
\varphi( x_1^*) = \max_{y \in ( \bar x_1, \, \bar x_2)} \left\{ \varphi(y) - \lambda y\right\}, \quad \mbox{ i.e. } \varphi_1'(x_1^*) = \lambda, \; \varphi_1''(x_1^*) \leq 0, \; \bar x_1 < x_1^* < \bar x_2.
$$
Therefore, we obtain
\[
\mathcal{M}W_1(x) = \varphi_1(x_1^*) - c - \lambda(x_1^* - x), \quad \mathcal{H}W_2(x) = \varphi_2(x_1^*) + d + \gamma(x_1^* - x).
\]
The parameters appearing in the expressions for $W_1$ and $W_2$ must be chosen so as to satisfy the regularity assumptions in the verification theorem, i.e.
\begin{eqnarray*}
&&W_1 \in C^2( (-\infty, \, \bar x_1] \cup (\bar x_1, \, \bar x_2)) \cap C^1(-\infty, \bar x_2) \cap C(\mathbb{R}),\\
&&W_2 \in C^2( (\bar x_1, \, \bar x_2) \cup ( \bar x_2, \, \infty)) \cap C^1(\bar x_1, \, \infty) \cap C(\mathbb{R}).
\end{eqnarray*}
We can summarize the description of our candidates for equilibrium payoffs in the following
\begin{ansatz} \label{qvi sol def}
Let $W_1$ and $W_2$ be as in \eqref{system1}-\eqref{system2} where the parameters involved $$( C_{11}, \, C_{12}, \, C_{21}, \, C_{22}, \, \bar x_1, \, \bar x_2, \, x_1^*)$$
satisfy the order condition
\begin{equation} \label{ordering}
\bar x_1 < x_1^* < \bar x_2 ,
\end{equation}
and the following equations
\begin{equation} \label{pasting system}
\begin{dcases}
\varphi'_1(x_1^*) = \lambda \quad \mbox{and} \quad \varphi''(x_1^*) \leq 0 & \mbox{(optimality of $x_1^*$)} \\
\varphi'_1(\bar x_1) = \lambda  & \mbox{($C^1$-pasting in $\bar x_1$)} \\
\varphi_2'(\bar x_2) = -b & \mbox{($C^1$-pasting in $\bar x_2$)} \\
\varphi_1(\bar x_1) = \varphi(x_1^*) - c - \lambda ( x_1^* - \bar x_1) & \mbox{($C^0$-pasting in $\bar x_1$)} \\
\varphi_1(\bar x_2) = a  \bar x_2 & \mbox{($C^0$-pasting in $\bar x_2$)} \\
\varphi_2(\bar x_1) = \varphi_2(x_1^*) + d + \gamma(x_1^* - \bar x_1) & \mbox{($C^0$-pasting in $\bar x_1$)}\\
\varphi_2(\bar x_2) = - b \bar x_2  &  \mbox{($C^0$-pasting in $\bar x_2$)}
\end{dcases}
\end{equation}
\end{ansatz}

\paragraph{Reparameterization.} 
We are going to conveniently reparameterize the equations above in order to reduce their complexity. Using the expressions in \eqref{sol-ode} we can rewrite \eqref{pasting system} as follows 
\begin{subnumcases}{}
\theta C_{11} e^{\theta x_1^*} - \theta C_{12} e^{ - \theta x_1^*} + \frac{1}{r} = \lambda  \label{(1)}\\
\theta C_{11} e^{\theta \bar x_1} - \theta C_{12} e^{ - \theta \bar x_1} + \frac{1}{r} = \lambda \label{(2)}  \\
\theta C_{21} e^{\theta \bar x_2} - \theta C_{22} e^{ - \theta \bar x_2} - \frac{1}{r} = -b  \label{(3)}\\
C_{11} e^{\theta \bar x_1} + C_{12} e^{ - \theta \bar x_1} + \frac{\bar x_1 - s}{r} = C_{11} e^{\theta x_1^*} + C_{12} e^{ - \theta x_1^*} + \frac{x_1^* - s}{r} - c - \lambda ( x_1^* - \bar x_1)  \label{(4)}\\
C_{11} e^{\theta \bar x_2} + C_{12} e^{ - \theta \bar x_2} + \frac{\bar x_2 - s}{r} = a  \bar x_2  \label{(5)}\\
C_{21} e^{\theta \bar x_1} + C_{22} e^{ - \theta \bar x_1} + \frac{q - \bar x_1}{r} = C_{21} e^{\theta x_1^*} + C_{22} e^{ - \theta x_1^*} + \frac{q - x_1^*}{r} + d + \gamma(x_1^* - \bar x_1) \label{(6)}\\
C_{21} e^{\theta \bar x_2} + C_{22} e^{ - \theta \bar x_2} + \frac{q - \bar x_2}{r} = -b \bar x_2  \label{(7)}
\end{subnumcases}
So, subtracting \eqref{(2)} to \eqref{(1)} we obtain 
\begin{equation*}
C_{11} = - \frac{1 - \lambda r }{r \theta} \frac{1}{e^{\theta x_1^*} + e^{\theta \bar x_1}}, \quad C_{12} = \frac{ 1 - \lambda r}{r \theta}\frac{e^{\theta(x_1^* + \bar x_1)}}{e^{\theta x_1^*} + e^{\theta \bar x_1}}.
\end{equation*}
Then, adding \eqref{(3)} to \eqref{(7)} we find
\begin{equation*}
C_{21} = \frac{e^{- \theta \bar x_2}}{2r} \left[ (1 - br) \left(\bar x_2 + \frac{1}{\theta}\right) - q \right], \quad C_{22} =  \frac{e^{\theta \bar x_2}}{2r} \left[ (1 - br ) \left(\bar x_2 - \frac{1}{\theta}\right) - q \right].
\end{equation*}
Hence, by substitution, we are reduced to solving the following sub-system
\begin{subnumcases}{}
-2\frac{1 - \lambda r}{r\theta} \frac{e^{\theta \bar x_1} - e^{\theta x_1^*}}{e^{\theta \bar x_1} + e^{\theta x_1^*}} + \frac{1 - \lambda r}{r} (\bar x_1 - x_1^*) + c =0 \label{(4)1}\\
- \frac{1 - \lambda r}{r\theta} \frac{e^{2 \theta \bar x_2}}{e^{\theta x_1^*} + e^{\theta \bar x_1}} + ((1 - ar)\bar x_2 - s)\frac{e^{\theta \bar x_2}}{r} + \frac{ 1 - \lambda r}{r\theta} \frac{e^{\theta(x_1^* + \bar x_1)}}{e^{\theta x_1^*} + e^{\theta \bar x_1}} = 0 \label{(5)1} \\
\frac{e^{- \theta \bar x_2}}{2r} \left[ (1 - br) \left(\bar x_2 + \frac{1}{\theta}\right) - q \right](e^{\theta \bar x_1} - e^{\theta x_1^*}) + \frac{e^{\theta \bar x_2}}{2r} \left[ (1 - br) \left(\bar x_2 - \frac{1}{\theta}\right) - q \right](e^{-\theta \bar x_1} - e^{-\theta x_1^*}) \nonumber\\
\quad  +\frac{1 - \gamma r}{r} (x_1^* - \bar x_1) - d = 0 \label{(6)1}
\end{subnumcases}
Now, the change of variable $z = e^{\theta(x_1^* - \bar x_1)}$ turns equation \eqref{(4)1} into the following
\begin{equation} \label{F(z)}
\ln z - 2\left(\frac{z -  1}{z + 1} \right) -  \frac{c r \theta}{1 - \lambda r } = 0,
\end{equation} 
which has a unique solution $\tilde z > 1$. Indeed, let $F(z) := \ln z - 2 (\frac{z - 1}{z + 1} ) - \frac{c r \theta}{1 - \lambda r}$ and observe that it satisfies $F'(z) > 0$ for all $z>1$. Moreover $z = e^{\theta(x_1^* - \bar x_1)} > 1$ due to the order condition \eqref{ordering}, $F(1) < 0$ and $\lim_{z \rightarrow +\infty} F(z) = +\infty$. Therefore, there is only one value $\tilde z$ such that $F(\tilde z) = 0$, which can be easily computed numerically. 

Now, in order to solve \eqref{(5)1} and \eqref{(6)1} we perform a second change of variable, $w = e^{\theta(\bar x_2 - \bar x_1)}$, leading to the following equations
\begin{subnumcases}{}
-\frac{1 - \lambda r }{r\theta} \frac{w^2 e^{\theta \bar x_1}}{\tilde z + 1} + \left( (1 - ar)\bar x_2 - s\right) \frac{e^{\theta \bar x_1}w}{r} + \frac{1 - \lambda r}{r\theta}\frac{e^{\theta x_1^*}}{\tilde z +1} = 0, &\label{(5)2} \\
 \frac{1 - \tilde z}{2rw}\left[(1 - br) \left(\bar x_2 + \frac{1}{\theta}\right) - q\right] + \frac{w(\tilde z - 1)}{2r\tilde z} \left[(1 - br) \left(\bar x_2 - \frac{1}{\theta}\right) - q\right] + \frac{1 - \gamma r}{\theta r} \ln \tilde z  - d = 0.& \quad\label{(6)2}
\end{subnumcases}
Notice that \eqref{(5)2} is linear in $\bar x_2$, hence it can be easily solved in terms of $\tilde z$ and $w$, to get
\begin{equation}\label{barx2}
\bar x_2 = \left(  \frac{1 - \lambda r}{\theta w}\frac{w^2 - \tilde z}{\tilde z+1} + s\right)\frac{1}{1 - ar}.
\end{equation}
Regarding \eqref{(6)2}, it can be rewritten as
\begin{eqnarray}
&& -\tilde z \left[(1 - br) \left(\bar x_2 + \frac{1}{\theta}\right) - q\right] + w^2\left[(1 - br) \left(\bar x_2 - \frac{1}{\theta}\right) - q\right] + \frac{2\tilde z w}{\tilde z - 1}\left(\frac{1 - \gamma r}{\theta}\ln \tilde z - rd \right) \nonumber \\
&&=w^4 \frac{(1 - br)(1 - \lambda r)}{\theta(1 - ar)(\tilde z +1)} + w^3\left[ (1 - br)\left(\frac{s}{1 - ar} - \frac{1}{\theta}\right) - q \right] +  2\tilde zw^2\left( \frac{1}{\tilde z - 1}\left(\frac{(1 - \gamma r)}{\theta}\ln \tilde z - rd\right) \right. \nonumber \\ 
&& \quad \left. - \frac{(1 - br)(1 - \lambda r)}{\theta(1 - ar)(\tilde z + 1)}\right)   + \tilde z w\left(q - (1 - br)\left(\frac{s}{1 - ar} + \frac{1}{\theta}\right)\right) + \frac{(1 - br)(1 - \lambda r) \tilde z^2}{\theta (1 - ar)(\tilde z +1)} = 0. \label{wequation}
\end{eqnarray}
The equation for $w$ above is a quartic equation for which explicit formulae for its roots are available. However, since they are quite cumbersome and not easy to use, we are going to solve it numerically, leaving the analysis for later. Once the two new parameters $\tilde z$ and $\tilde w$ are found, by solving numerically the respective equations above, the thresholds $\bar x_1, \bar x_2$ and the optimal level for P1, $x_1 ^*$, can be deduced automatically. It remains to check under which additional conditions such thresholds correspond to a Nash equilibrium of our original linear game. This will be done in the next paragraph.

\paragraph{Characterization of the equilibrium and verification.}

The next proposition summarizes our findings and establish the link between the solutions $\tilde z$ and $\tilde w$ to the equations above with the Nash equilibrium of threshold type we are looking for, provided some additional inequalities are fulfilled.
\begin{proposition} \label{propequi} Assume that there exists a solution $(\tilde z, \, \tilde w)$ to \eqref{F(z)}-\eqref{wequation} such that $1 < \tilde z < \tilde w$ and additionally
\begin{eqnarray}
&&
0 \le \frac{(1 - br)(1 - \lambda r)(\tilde w^2 - \tilde z)}{\theta \tilde w(1 - ar)(\tilde z + 1)} + \frac{1 - br}{1 - ar}s  - q < \frac{1 - br}{\theta},
\label{NE11}\\
&& \left(\frac{1 - br}{1 - ar}\left( \frac{(1 - \lambda r)(\tilde w^2 - \tilde z)}{\theta \tilde w(\tilde z + 1)} + s\right) - q\right)(\tilde w - 1)^2 + \frac{1 - br}{\theta}(1 + 2\tilde w \ln \tilde w - \tilde w^2) > 0.\label{NE12}\end{eqnarray} 
Then a Nash equilibrium for the game in Section \ref{game} exists and it is given by the pair $(u^*, \eta^*)$, where $u^* = (\tau_n, \, \delta_n)_{n \geq 1}$ is defined by
\[ \tau_n = \inf\left\{ t > \tau_{n -1}; X_t \in  (- \infty, \bar x_1]\right\} ,\quad  \delta_n = (x_1^* - x)\mathbf 1_{(-\infty, \bar x_1]}(x),\]
and
\[ \eta^* = \inf\{ t \geq 0: X_t \in [\bar x_2 , \, +\infty) \},\]
where the thresholds $\bar x_1, \, x_1^*$ and $\bar x_2$ satisfy
\begin{equation*}
x_1^* = \bar x_2 + \frac{\ln z -  \ln w}{\theta}, \quad \bar x_1 = \bar x_2 - \frac{\ln w}{\theta}, \quad \bar x_2 = \left(  \frac{1 - \lambda r}{\theta \tilde w}\frac{\tilde w^2 - \tilde z}{\tilde z+1} + s\right)\frac{1}{1 - ar}.
\end{equation*}
 Moreover, the functions $W_1, \, W_2$ in Ansatz \ref{qvi sol def} coincide with the equilibrium payoff functions $V_1, \, V_2$, i.e.
$$
V_1 \equiv W_1, \quad \mbox{ and } V_2 \equiv W_2.
$$
\end{proposition}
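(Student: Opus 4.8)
The plan is to show that the pair $(W_1,W_2)$ singled out by Ansatz \ref{qvi sol def} satisfies all the hypotheses of the Verification Theorem \ref{oksendal-sulem verification}, so that the conclusion $V_i\equiv W_i$ together with the Nash property of $(u^*,\eta^*)$ follows at once. First I would recover the seven parameters from a solution $(\tilde z,\tilde w)$: the displayed formulas give $\bar x_1,\bar x_2,x_1^*$, and back-substitution into the expressions for $C_{11},\dots,C_{22}$ obtained in the reparameterization fixes the constants. The requirement $1<\tilde z<\tilde w$ is exactly the ordering \eqref{ordering}, since $\tilde z=e^{\theta(x_1^*-\bar x_1)}$ and $\tilde w=e^{\theta(\bar x_2-\bar x_1)}$; moreover, because $\varphi_1''(x_1^*)=\tfrac{\theta(1-\lambda r)}{r}\tfrac{e^{\theta\bar x_1}-e^{\theta x_1^*}}{e^{\theta\bar x_1}+e^{\theta x_1^*}}<0$ under \eqref{ass-r} and \eqref{ordering}, the second-order optimality condition in \eqref{pasting system} is automatic. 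By construction the seven equations of \eqref{pasting system} are precisely the $C^0$/$C^1$ smooth-fit conditions at $\bar x_1,\bar x_2$, so $W_1,W_2$ enjoy the regularity required by the theorem ($C^2$ away from the single boundary points, $C^1$ across them, continuous and of linear, hence polynomial, growth, with the boundaries trivially Lipschitz and the one-sided second derivatives bounded).

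The heart of the proof is to upgrade the equalities built into the Ansatz to the full QVI system \eqref{P1 intervention}--\eqref{P2 control}, i.e. to check the inequalities that must hold everywhere. The equalities \eqref{P2 no deviation}, \eqref{boundary} and the vanishing branches of \eqref{P1 control}--\eqref{P2 control} in the continuation band $(\bar x_1,\bar x_2)$ hold by design through \eqref{ode} and the definitions of $\mathcal{M}W_1,\mathcal{H}W_2$. It remains to verify, in the intervention regions, that $\mathcal{A}W_1-rW_1+f\le 0$ on $(-\infty,\bar x_1]$ and $\mathcal{A}W_2-rW_2+g\le 0$ on $[\bar x_2,\infty)$, and, in the continuation regions, that $\mathcal{M}W_1-W_1<0$ on $(\bar x_1,\infty)$ and $W_2+bx>0$ on $(-\infty,\bar x_2)$. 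Since $W_1$ and $W_2$ are affine in the two intervention regions, the generator inequalities become linear in $x$; the signs $1-\lambda r>0$ and $1-br>0$ from \eqref{ass-r} render them monotone, so each is binding only at the boundary, where \eqref{ode} rewrites them as $\varphi_1''(\bar x_1)\ge0$ and $\varphi_2''(\bar x_2)\ge0$. The first holds automatically (again by \eqref{ordering}), while $\varphi_2''(\bar x_2)=\tfrac{\theta^2}{r}\big((1-br)\bar x_2-q\big)\ge0$ is exactly the left inequality in \eqref{NE11}.

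For the two remaining inequalities I would study the auxiliary functions $g_1:=\mathcal{M}W_1-\varphi_1$ and $g_2:=\varphi_2+bx$. Smooth fit gives $g_1(\bar x_1)=g_1'(\bar x_1)=0$, and since $\varphi_1''$ is decreasing with $\varphi_1''(\bar x_1)>0\ge\varphi_1''(x_1^*)$, one gets $g_1'\le0$ on $(\bar x_1,x_1^*)$ and $g_1(x_1^*)=-c<0$, whence $\mathcal{M}W_1-W_1<0$ on all of $(\bar x_1,\infty)$ (on $[x_1^*,\infty)$ the optimal impulse is $0$ and the gap equals $-c$). Likewise $g_2(\bar x_2)=g_2'(\bar x_2)=0$ with $g_2''(\bar x_2)=\varphi_2''(\bar x_2)\ge0$; writing $\xi=x-\bar x_2$ and $A:=(1-br)\bar x_2-q$ one finds $g_2=\tfrac{A}{r}(\cosh\theta\xi-1)+\tfrac{1-br}{r}\big(\tfrac{\sinh\theta\xi}{\theta}-\xi\big)$, and a short computation shows the value at $\bar x_1$ equals $\tfrac{1}{2r\tilde w}\big[A(\tilde w-1)^2+\tfrac{1-br}{\theta}(1+2\tilde w\ln\tilde w-\tilde w^2)\big]$, so $g_2(\bar x_1)>0$ is precisely \eqref{NE12}. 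The genuinely delicate point is to pass from positivity at the two ends to positivity throughout the band: here the right inequality $A<\tfrac{1-br}{\theta}$ in \eqref{NE11} is what forces $g_2$ to be unimodal on $(\bar x_1,\bar x_2)$ (the equation $g_2'=0$ reduces to $\lvert\tanh(\theta\xi/2)\rvert=\tfrac{A\theta}{1-br}$, hence at most one interior critical point), so that $g_2>0$ on the whole band follows from $g_2(\bar x_1)>0$ and $g_2(\bar x_2)=0$; on $(-\infty,\bar x_1]$, where $W_2=\mathcal{H}W_2$ is affine with slope $b-\gamma<0$ by \eqref{ass-ab}, the minimum of $W_2+bx$ is again at $\bar x_1$ and reduces to the same \eqref{NE12}. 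I expect this unimodality and global-sign analysis of $g_2$ to be the main obstacle, the rest being smooth-fit bookkeeping.

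Finally I would check admissibility: under $(u^*,\eta^*)$ the state is reflected upward from $\bar x_1$ to $x_1^*$ and stopped by P2 the first time it reaches $\bar x_2$, so it remains in the bounded band $[\bar x_1,\bar x_2]$ up to $\eta^*$ and the number of impulses before $\eta^*$ has exponential moments; this yields $(u^*,\eta^*)\in\mathcal{A}_x$ together with the integrability used in the theorem. With the QVI system and all regularity conditions established, Theorem \ref{oksendal-sulem verification} applies and delivers both the Nash property of $(u^*,\eta^*)$ and the identifications $V_1\equiv W_1$, $V_2\equiv W_2$.
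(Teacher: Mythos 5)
Your proposal is correct and, at the level of overall strategy, mirrors the paper's proof: both verify the hypotheses of Theorem \ref{oksendal-sulem verification} (regularity and polynomial growth by construction; the QVI equalities by design of the Ansatz; the generator inequalities on the intervention regions by linearity and monotonicity plus a boundary check, where your reformulations $\varphi_1''(\bar x_1)\ge 0$ and $\varphi_2''(\bar x_2)=\tfrac{\theta^2}{r}\bigl((1-br)\bar x_2-q\bigr)\ge 0$ are, via \eqref{ode}, identical to the paper's conditions), the obstacle inequality for $W_1$ via monotonicity of $y\mapsto W_1(y)-\lambda y$ (your $g_1$-analysis is exactly the paper's Lemma \ref{lemma w1 qvi} for $\Gamma$), and finally admissibility. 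Where you genuinely depart from the paper is the inequality $\varphi_2(x)+bx>0$ on $(\bar x_1,\bar x_2)$: the paper's Lemma \ref{lemma2} discards the term $2z\tfrac{1-br}{\theta}\ln z$ (with $z=e^{\theta(\bar x_2-x)}$) and factorizes the remaining quadratic $(A+B)+z^2(A-B)-2Az$, where $A:=(1-br)\bar x_2-q$ and $B:=\tfrac{1-br}{\theta}$, as $(z-1)\bigl(z-\tfrac{A+B}{A-B}\bigr)$; this omits the leading coefficient $A-B$, which is \emph{negative} under \eqref{NE11}, and since that quadratic vanishes at $z=1$, has derivative $-2B<0$ there and is concave, it is strictly negative for all $z>1$, so the paper's lower bound cannot deliver positivity as written. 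Your route — the hyperbolic representation of $g_2=\varphi_2+bx$ centred at $\bar x_2$, the factorization $g_2'=\tfrac{2}{r}\sinh(\theta\xi/2)\bigl[A\theta\cosh(\theta\xi/2)+(1-br)\sinh(\theta\xi/2)\bigr]$ with $\xi=x-\bar x_2$, showing at most one interior critical point which is a local maximum, so that the minimum of $g_2$ over the band sits at the endpoints, where $g_2(\bar x_2)=0$ and $2r\tilde w\,g_2(\bar x_1)$ equals precisely the left-hand side of \eqref{NE12} — keeps the logarithmic term and is correct (I checked your identities, including $\varphi_1''(x_1^*)<0$ and the value of $g_2(\bar x_1)$); it therefore not only differs from but actually repairs the paper's argument at this step. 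Two minor remarks: the strict bound $A<B$ in \eqref{NE11} is not actually needed for your unimodality step (if $A\ge B$ there is no interior critical point and $g_2$ is monotone on the band, which is equally good), so your attribution of the "delicacy" to that inequality is slightly off, though harmless; and your admissibility sketch is looser than the paper's, since Definition \ref{admissibility} requires integrability of $\sum_{k:\tau_k\le\infty}e^{-r\tau_k}(c+\lambda\abs{\delta_k})$ over \emph{all} intervention times of $u^*$, not only those before $\eta^*$, so ``finitely many impulses with exponential moments before $\eta^*$'' is not quite the right quantity — the paper closes this by dominating each $\tau_k$ from below by a sum of $k$ i.i.d.\ exit times from $(\bar x_1,\bar x_2)$ and summing the resulting geometric series, and your argument should be completed the same way.
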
 

\begin{proof} The proof consists in checking all the conditions needed to apply the Verification Theorem \eqref{oksendal-sulem verification}. 
First, notice that by construction the functions $W_1$ and $W_2$ satisfy all required regularity properties, i.e. $W_1$ and $W_2$ have polynomial growth, $W_1 \in C^2\!\left( (-\infty, \bar x_2)/ \{ \bar x_1\}\right) \cap C^1\left((-\infty, \bar x_2)\right) \cap C(\mathbb{R})$ and $W_2 \in C^2\left((\bar x_1, \infty )/ \{ \bar x_2\}\right) \cap C^1\left((\bar x_1, \infty)\right) \cap C(\mathbb{R})$.

Moreover Lemmas \ref{lemma w1 qvi} and \ref{lemma2} in the Appendix grant the optimality of the impulse $\delta(x)$, i.e.
\[\{ \delta(x) \} = \argmax_{\delta \in Z} \left\{ W_1(x+ \delta) - c - \lambda \abs{\delta}\right\}\]
together with the properties
\[\mathcal{M} W_1 - W_1 \leq 0, \quad W_2(x) +bx \geq 0, \quad x \in \mathbb R.\]
Next, we show that for all $x \in \{ \mathcal{M} W_1 - W_1 = 0 \} = ( -\infty, \, \bar x_1]$, we have $W_2(x) = \mathcal{H} W_2(x)$. Indeed, by definition of $\mathcal{H}W_2$ we have:
\begin{align*}
\mathcal{H}W_2(x)  &= W_2(x + \delta(x) ) + d + \gamma \abs{\delta(x)} = W_2(x_1^*) + d + \gamma (x_1^* - x) \\
&= \varphi_2(x_1^*) + d + \gamma (x_1^* - x) = W_2(x) \qquad \forall \, x \in ( -\infty, \, \bar x_1] .
\end{align*}
Now, let $x \in \{\mathcal{M} W_1 - W_1 < 0\}$. We have to prove that 
$$
\max\{ \mathcal{A} W_2(x) - rW_2(x) + q - x, \, -bx - W_2(x) \} = 0.
$$   
Since $\{\mathcal{M} W_1 - W_1 < 0\} =  (\bar x_1, \, \infty)$, we can consider two separate cases. In $(\bar x_1, \, \bar x_2)$ we have $-bx - W_2(x) <0$ and 
$$
\mathcal{A} W_2(x) - r W_2(x) + q - x = \mathcal{A} \varphi_2(x) - r \varphi_2(x) + q - x = 0 
$$ 
since $\varphi_2$ is solution to the ODE \eqref{ode}. On the other hand, in $[\bar x_2, \, \infty)$ we know that $-bx = W_2(x)$, then we have to check that $\mathcal{A}W_2(x) - rW_2(x) + q - x \leq 0$ for all $x \in [\bar x_2, \, \infty)$. First, notice that $W_2(x) = - bx$ and $\mathcal{A}W_2(x) = 0$. Hence, we are reduced to checking the inequality
\begin{equation} \label{ 1 - br greater}
\mathcal{A}W_2(x) - r W_2(x) + q - x = brx + q - x = q - (1 - br)x \leq 0.
\end{equation}
Since by assumption $1 - br > 0$, the function $x \mapsto q - ( 1 - br) x$ is decreasing, so we just need to check whether the inequality holds in $\bar x_2$, i.e. $(1 - br)\bar x_2 - q\geq 0 $ which is satisfied by \eqref{NE11}.

To conclude our verification that the candidate equilibrium payoffs satisfy the QVI system, we are left with checking that $-bx - W_2(x) = 0$ implies $W_1(x) = ax$, and that, on the other side, $-bx - W_2(x) < 0$ implies
\[ \max \{\mathcal{A} W_1(x) - r W_1(x) + x - s, \mathcal{M} W_1 (x) - W_1(x)\} = 0.\]
Now, the first implication holds by definition, while the second one boils down to proving 
$$
 \max \{ \mathcal{A}W_1(x) - rW_2(x) +x - s, \, \mathcal{M}W_1(x) - W_1(x)\} = 0, \quad  x \in ( - \infty,  \bar x_2).
$$ 
For $x \in (\bar x_1, \, \bar x_2)$ we have $\mathcal{M} W_1(x) - W_1(x) < 0$ and, as before,
$$
\mathcal{A}W_1(x) - rW_1(x) + x - s = \mathcal{A}\varphi_1(x) - r \varphi_1(x) + x - s = 0
$$
 as $\varphi_1$ is solution to the ODE \eqref{ode}. For $x \in (-\infty, \bar x_1]$ we know that $ \mathcal{M}W_1(x) - W_1(x) = 0$ and therefore we have to check that 
$$
 \mathcal{A}W_1(x) - rW_1(x) + x - s \leq 0 ,\quad  x \in ( -\infty, \bar x_1].
 $$
 To do that, recall first that $W_1(x) = \varphi_1(x_1^*) - c - \lambda(x_1^* - x)$ and $\mathcal{A}W_1(x) = 0$, which gives
 \begin{align*}
\mathcal{A}W_1(x) - rW_1(x) + x - s &= -r \varphi_1 (x_1^*) + rc + r \lambda (x_1^*\! -\! x) \!+\! x\!-\! s \\
&= - r ( \varphi_1(\bar x_1) \!+\! c \!+\! \lambda(  x_1^* \!-\! \bar x_1)) \!+\! rc \! + \!r\lambda(x_1^* \!-\! x)\! +\! x\! -\! s \!  \\
 & = - r \varphi_1(\bar x_1) - r \lambda (x - \bar x_1) + x -s ,
 \end{align*}
 since $ \varphi_1(\bar x_1) = \varphi_1(x_1^*) - c - \lambda (x_1^* - \bar x_1) $. Notice that, since by assumption $ 1 - \lambda r > 0$, the function $x \mapsto - r \varphi_1(\bar x_1) - r \lambda (x - \bar x_1) + x -s $ is increasing in $x$. As a result, we only need to prove that the desired inequality holds for $ x = \bar x_1$, i.e.
 $$
 -r \varphi_1(\bar x_1) + \bar x_1 - s \leq 0 ,
 $$
 which is verified since $\mathcal{A}\varphi_1(\bar x_1) - r \varphi_1(\bar x_1) + \bar x_1 - s = 0$ and $\mathcal{A} \varphi_1(\bar x_1) = r \varphi_1(\bar x_1) - \bar x_1 + s \geq 0$, due to $\varphi_1''(\bar x_1) \geq 0$.

To finish the proof, we check that equilibrium strategies are $x$-admissible for every $x \in \mathbb{R}$. By construction, the controlled process never exits from $(\bar x_1, \, \bar x_2) \cup \{ x \}$, so that $\sup_{t \geq 0} \abs{X_t} \in L^p(\Omega)$ holds. It is easy to check that all the other conditions are satisfied provided we show the following:
\begin{equation} \label{integrability intervention cost}
\mathbb{E}_x \left[ \sum_{k \geq 1} e^{-r \tau_k} ( c+ \lambda \abs{\delta_k}) \right] < +\infty  .
\end{equation}
To start, let us assume that the initial state $x$ is $x_1^*$. The idea is to write $\tau_k$ as a sum of independent and identically distributed copies of some exit time (as in the proof of Proposition 4.7 in \cite{aid2016nonzero}). Denote by $\mu$ the exit time of the process $x_1^* + \sigma W$ from $(\bar x_1, \, \bar x_2)$ where $W$ is a one-dimensional Brownian motion. Then each time $\tau_k$ can be decomposed as $\tau_k =\sum_{l \geq 1}^k \zeta_l$ where $\zeta_l$ are i.i.d. random variables with the same law as $\mu$.  We can now show \eqref{integrability intervention cost}. As $ \delta_k = \delta_1 =  x_1^* - \bar x_1 $ for all $k \ge 1$, we have 
\begin{align*}
\mathbb{E}_{x_1^*} \left[ \sum_{k \geq 1} e^{-r \tau_k}(c + \lambda \abs{\delta_k})\right] &\leq (c + \lambda \delta_1 ) \mathbb{E}_{x_1^*} \left[ \sum_{k\geq 1} e^{-r \tau_1}\right] \\
&= (c + \lambda \delta_1 ) \mathbb{E}_{x_1^*} \left[ \sum_{k \geq 1}  e^{ -r \sum_{l=1}^k \zeta_l} \right] \\
&=  (c + \lambda \delta_1 ) \mathbb{E}_{x_1^*} \left[ \sum_{k \geq 1} \Pi_{l\geq 1}^k e ^{- r \zeta_l} \right]
\end{align*}
and, by the Fubini-Tonelli theorem and the independence of $(\zeta_l)_{l\ge 1}$, we get 
$$
\sum_{k \geq 1} \Pi_{l \geq 1}^k \mathbb{E}_{x_1^*} \left[ e^{-r\zeta_l} \right] \leq \sum_{k \geq 1} \left( \mathbb{E}_{x_1^*} \left[ e^{- r\mu} \right] \right)^k ,
$$
which is a convergent geometric series, since $\mu \geq 0$. Then, for any $x \in (\bar x_1, \, \bar x_2)$ same arguments hold whereas, when $x \in [\bar x_2, \, +\infty)$, P2 stops as soon as the game starts and, as a consequence, P1 cannot apply any impulse, hence, the condition is satisfied. Finally, if $x \in (-\infty, \, \bar x_1]$ we have  
$$
\mathbb{E}_x \left[ \sum_{k \geq 1} e^{-r \tau_k} ( c+ \lambda \abs{\delta_k}) \right] = c + \lambda \abs{x_1^* - x} + \mathbb{E}_{x_1^*} \left[ \sum_{k \geq 1} e^{-r \tau_k} ( c+ \lambda \abs{\delta_k}) \right] < +\infty .
$$
since $\sup_{t \geq 0} \abs{X_t} \in L^p(\Omega)$.
\qed
\end{proof}

\subsection{An equilibrium where the controller activates the stopper}
We turn now to another kind of Nash equilibrium, where P1 behaves similarly as in the previous type with the main difference that this time when the state variable $X$ falls below a given threshold, he will intervene and send $X$ directly to the stopping region of P2, hence forcing him to stop the game instantaneously. In particular, this would be an equilibrium in which the two players act at the same time. The approach we are going to use to characterize such an equilibrium follows the same steps as in the previous subsection.  
\paragraph{Heuristics.}
We start with some heuristics leading us to formulate a conjecture on the equations the thresholds characterizing this equilibrium should reasonably satisfy. Arguing as before, we expect the candidates for equilibrium payoffs to be of the following type \eqref{ex:system1}-\eqref{ex:system2} as 
\begin{eqnarray}
W_1(x) &=& \left\{\begin{array}{ll} 
a x & \mbox{in} [\bar x_2, \infty) \\
\varphi_1 (x) & \mbox{in} (\bar x_1, \bar x_2) \\
\mathcal{M}W_1(x) & \mbox{in} ( -\infty, \bar x_1]
\end{array}\right.\label{system3}\\
W_2(x) &=& \left\{\begin{array}{ll}
 - bx & \mbox{in} [\bar x_2, \infty)  \\
\varphi_2 (x) & \mbox{in} ( \bar x_1, \bar x_2)\\
\mathcal{H}W_2(x) & \mbox{in} ( -\infty, \bar x_1]
\end{array}\right.\label{system4}
\end{eqnarray}
for suitable thresholds $\bar x_i$, $i=1,2$.

Now, according to the type of equilibrium we want to identify, we investigate the case in which the maximum point of the function $y \mapsto W_1(y) - \lambda y $ belongs to $ [ \bar x_2, \, \infty ) $, meaning that when P1 intervenes he is applying an optimal impulse moving the state variable to the stopping region of her competitor. Thus in this case we have
$$
 \mathcal{M} W_1 (x) = \sup_{y \geq \bar x_2} (ay - \lambda y) .
$$
Therefore, we have the following scenarios:
\begin{itemize}
\item if $ a > \lambda \, \Rightarrow \, x_1^* \rightarrow \infty $;
\item if $ a = \lambda \, \Rightarrow \, x_1^* $ could be any $x \geq \bar x_2$;
\item if $ a < \lambda \, \Rightarrow \, x_1^* = \bar x_2$.
\end{itemize}
Clearly, the only interesting case is $ a < \lambda$,  so that $x_1^* = \bar x_2$. As a consequence, this type of equilibrium will be characterized only by two thresholds. Similarly as in the previous subsection, we are going to  characterize the parameters $( C_{11}, C_{12},  C_{21}, C_{22})$ and the thresholds $(\bar x_1, \bar x_2 )$ by exploiting the smooth pasting conditions coming from the regularity assumptions postulated in Theorem \ref{oksendal-sulem verification}. By doing so, we obtain
\begin{equation} \label{2pasting system}
\begin{dcases}
\varphi'_1(\bar x_1) = \lambda  & \mbox{($C^1$-pasting in $\bar x_1$)} \\
\varphi_1(\bar x_2) = a \bar x_2  & \mbox{($C^0$-pasting in $\bar x_2$)} \\
\varphi_1(\bar x_1) = a  \bar x_2 - c - \lambda (\bar x_2 - \bar x_1) & \mbox{($C^0$-pasting in $\bar x_1$)} \\
\varphi_2'(\bar x_2) = -b & \mbox{($C^1$-pasting in $\bar x_2$)} \\
\varphi_2(\bar x_2) =  - b \bar x_2 & \mbox{($C^0$-pasting in $\bar x_2$)}\\
\varphi_2(\bar x_1) = - b \bar x_2 + d + \gamma (\bar x_2 - \bar x_1)  &  \mbox{($C^0$-pasting in $\bar x_1$)}
\end{dcases}
\end{equation}
together with the order condition $ \bar x_1 < \bar x_2 $.

\paragraph{Reparameterization.}
We first rewrite \eqref{2pasting system} as
\begin{subnumcases}{}
\theta C_{11} e^{\theta \bar x_1 } - \theta C_{12} e^{-\theta \bar x_1} + \frac{1}{r} = \lambda  \label{1)}  \\
\theta C_{21} e^{\theta \bar x_2} - \theta C_{22} e^{ - \theta \bar x_2} - \frac{1}{r} = - b \label{2)}\\
C_{11} e^{\theta \bar x_2} + C_{12} e^{-\theta \bar x_2} + \frac{\bar x_2 - s}{r} = a \bar x_2 \label{3)} \\
C_{11} e^{\theta \bar x_1} + C_{12} e^{-\theta \bar x_1} + \frac{\bar x_1 - s}{r} = (a - \lambda) \bar x_2 + \lambda \bar x_1 - c  \label{4)}\\
 C_{21} e^{\theta \bar x_2} + C_{22} e^{ - \theta \bar x_2} + \frac{q - \bar x_2}{r} = - b \bar x_2 \label{5)}\\
 C_{21} e^{\theta \bar x_1} +  C_{22} e^{ - \theta \bar x_1} + \frac{q - \bar x_1}{r} = (\gamma - b) \bar x_2 + d - \gamma \bar x_1 \label{6)}
\end{subnumcases} 

Then, dividing \eqref{1)} by $\theta$ and adding it to \eqref{4)}, we obtain a linear equation in $C_{11}$ that can be solved giving
\begin{eqnarray}
C_{11} = \frac{e^{-\theta \bar x_1}}{2} \left[(a - \lambda) \bar x_2 - \left(\bar x_1 + \frac{1}{\theta}\right)\frac{1 - \lambda r}{r} - c + \frac{s}{r}\right] , \label{C11 i}
\end{eqnarray}
and consequently
\begin{equation} \label{C12 i}
C_{12} = \frac{e^{\theta \bar x_1}}{2} \left[ ( a - \lambda) \bar x_2 - \left( \bar x_1 - \frac{1}{\theta}\right) \frac{1 - \lambda r}{r} - c + \frac{s}{r} \right].
\end{equation}
A similar manipulation of equations \eqref{2)} and \eqref{5)} yields
\begin{align}
C_{21} & = \frac{e^{-\theta \bar x_2}}{2r} \left[ (1 - br) \left(\bar x_2 + \frac{1}{\theta} \right) - q \right] \label{C21 i},\\ 
C_{22} & = \frac{e^{\theta \bar x_2}}{2r} \left[ (1 - br) \left(\bar x_2 - \frac{1}{\theta} \right) - q \right]. \label{C22 i}
\end{align}
At this point, plugging \eqref{C11 i} and \eqref{C12 i} in \eqref{3)} we obtain 
\begin{eqnarray*}
&&\frac{e^{\theta ( \bar x_2 - \bar x_1)}}{2} \left[(a - \lambda) \bar x_2 - \left(\bar x_1 + \frac{1}{\theta}\right)\frac{1 - \lambda r}{r} - c + \frac{s}{r}\right]  + \frac{e^{-\theta( \bar x_2 - \bar x_1)}}{2} \\
&& \qquad \times \left[ ( a - \lambda) \bar x_2 - \left( \bar x_1 - \frac{1}{\theta}\right) \frac{1 - \lambda r}{r} - c + \frac{s}{r} \right] + \frac{1 - ar}{r}\bar x_2 - \frac{s}{r} = 0 
\end{eqnarray*}
which, noting that $\bar x_1 = \bar x_2 - \frac{\ln w}{\theta}$ and applying the change of variable $w = e^{\theta( \bar x_2 - \bar x_1)}$, can be rewritten as
\begin{eqnarray*}
&&w \left[\frac{(1 - \lambda r)(\ln w -1)}{r\theta} - \frac{1 - ar}{r}\bar x_2 - c + \frac{s}{r}\right]  + \frac{1}{w}  \left[ \frac{(1 - \lambda r)(\ln w + 1)}{r\theta} - \frac{1 - ar}{r}\bar x_2 - c + \frac{s}{r} \right]\\
&& \quad +  2\frac{(1 - ar)\bar x_2 - s}{r}= 0.
\end{eqnarray*}
This is a linear equation in $\bar x_2$, yielding
\begin{equation} \label{x2 bar 1}
\bar x_2 = \frac{ (1 - \lambda r)((\ln w - 1)w^2 + \ln w + 1) - cr\theta(w^2 + 1)}{\theta(1 - ar)(w - 1)^2} + \frac{s}{1 - ar}. 
\end{equation}
Proceeding analogously with \eqref{6)}, we obtain the following alternative expression for $\bar x_2$
 \begin{equation} \label{x2 bar 2}
 \bar x_2 =   \frac{q}{1 - br} + \frac{w + 1}{\theta(w - 1)} + \frac{2(\theta rd - (1 - \gamma r) \ln w)w}{\theta ( 1 - b r)(w - 1)^2}. 
 \end{equation}
Then, by equating \eqref{x2 bar 1} to \eqref{x2 bar 2}, we obtain an equation in $w$:
\begin{align}
G(w) := & \frac{ (1 - \lambda r)((\ln w - 1)w^2 + \ln w + 1) - cr\theta(w^2 + 1)}{\theta(1 - ar)(w - 1)^2} + \frac{s}{1 - ar} \nonumber\\
&  - \frac{q}{1 - br} - \frac{w + 1}{\theta(w - 1)} - \frac{2(\theta rd - (1 - \gamma r) \ln w)w}{\theta ( 1 - b r)(w - 1)^2} = 0 \label{G(w) second case}
\end{align}
which has at least a solution, say $\hat w > 1$,
due to $\lim_{w \rightarrow + \infty} G(w) = + \infty$ and  $\lim_{w \rightarrow 1} G(w) = - \infty$. The first limit follows from the highest order term, $w^2\ln w$, being multiplied by $\frac{1 - \lambda r}{1 - ar} > 0$ (cf. \eqref{ass-r}). On the other hand, the second limit follows from \eqref{ass-ab}:
$$
\lim_{w \rightarrow 1} G(w) =\lim_{w \rightarrow 1} \frac{1}{(w-1)^2}\left[-\frac{2cr}{1 - ar} - \frac{2rd}{1 - br}\right] = - \infty.
$$

\paragraph{Characterization of the equilibrium and verification.}

The next proposition summarizes our characterization of this Nash equilibrium in terms of only one parameter, $\hat w$, provided some further conditions, that will be checked numerically in the next subsection.
\begin{proposition} \label{propequi2} Assume that there exists $\hat w$ solution to \eqref{G(w) second case} such that 
\begin{eqnarray}
&& (1 - \lambda r)(\hat w - \hat w\ln \hat w - 1) + cr\theta \hat w > 0, \label{NE21}\\
&& 0 \le (1 - br)(\hat w^2 - 1) + 2(\theta r d - ( 1 - \gamma r) \ln \hat w) \hat w < (1 - br)(\hat w - 1)^2 .
\label{NE22}
\end{eqnarray} 
Then a Nash equilibrium for the game in Section \ref{game} exists and it is given by the strategies $(u^*, \eta^*)$, with $u^* = (\tau_n, \delta_n)_{n \geq 1}$ defined by
\begin{equation*}
\tau_n = \inf\left\{ t > \tau_{n -1}; X_t \in  (- \infty, \bar x_1]\right\}, \quad \delta_n = \left(\bar x_2 - x\right) \mathbf 1_{(-\infty, \bar x_1]}(x) 
\end{equation*}
and
\[
\eta^* = \inf\{ t \geq 0: X_t \in [\bar x_2 , \, +\infty) \} ,\]
where the thresholds satisfy
\begin{equation*}
 \bar x_1 = \bar x_2 - \frac{\ln \hat w}{\theta}, \quad \bar x_2 =  \frac{q}{1 - br} + \frac{\hat w + 1}{\theta(\hat w - 1)} + \frac{2(\theta rd - (1 - \gamma r) \ln \hat w)\hat w}{\theta ( 1 - b r)(\hat w - 1)^2}.
\end{equation*}
 Moreover, the functions $W_1, \, W_2$ in Ansatz \ref{qvi sol def} coincide with the equilibrium payoff functions $V_1, \, V_2$, i.e.
$$
V_1 \equiv W_1 \quad \text{and}\quad V_2 \equiv W_2 .
$$
\end{proposition}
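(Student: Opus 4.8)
The plan is to verify that the candidate pair $(W_1,W_2)$ in \eqref{system3}-\eqref{system4}, together with the strategies $(u^*,\eta^*)$, meets every hypothesis of the Verification Theorem \ref{oksendal-sulem verification}, following the same scheme as in the proof of Proposition \ref{propequi}. The structural novelty is that P1's optimal target now sits on the boundary of P2's stopping region, $x_1^*=\bar x_2$, so that $\mathcal C_1=(\bar x_1,\infty)$ and $\mathcal C_2=(-\infty,\bar x_2)$ and the two interventions are simultaneous. First I would record that, by construction through the pasting system \eqref{2pasting system} (whose reduction to $G(\hat w)=0$ in \eqref{G(w) second case} fixes all parameters), the functions $W_1,W_2$ have polynomial growth and the required smoothness, namely $W_1\in C^2((-\infty,\bar x_2)\setminus\{\bar x_1\})\cap C^1((-\infty,\bar x_2))\cap C(\mathbb R)$ and $W_2\in C^2((\bar x_1,\infty)\setminus\{\bar x_2\})\cap C^1((\bar x_1,\infty))\cap C(\mathbb R)$.

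Next I would check the QVI system region by region. The optimality of the impulse $\delta(x)=\bar x_2-x$ is exactly where the case $a<\lambda$ from \eqref{ass-ab} is decisive: since $W_1(y)=ay$ on $[\bar x_2,\infty)$, the map $y\mapsto W_1(y)-\lambda y$ equals $(a-\lambda)y$ there and is strictly decreasing, so its supremum over $y\ge\bar x_2$ is attained precisely at $\bar x_2$; combined with arguments analogous to Lemmas \ref{lemma w1 qvi} and \ref{lemma2} this yields the singleton $\argmax$, $\mathcal M W_1-W_1\le 0$ everywhere and $W_2+bx\ge 0$ everywhere. The identity $\mathcal H W_2=W_2$ on $\{\mathcal M W_1-W_1=0\}=(-\infty,\bar x_1]$ holds by the very definition of $W_2$ there. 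For P2's equation on $\mathcal C_1=(\bar x_1,\infty)$ I would split into $(\bar x_1,\bar x_2)$, where $W_2=\varphi_2$ solves the ODE \eqref{ode} and $-bx-W_2<0$, and $[\bar x_2,\infty)$, where $W_2=-bx$ gives $\mathcal A W_2-rW_2+q-x=q-(1-br)x$; since $1-br>0$ this is decreasing, so it suffices to check it at $\bar x_2$, i.e. $(1-br)\bar x_2-q\ge 0$, which is the left inequality in \eqref{NE22}. Symmetrically, for P1's equation on $\mathcal C_2=(-\infty,\bar x_2)$ the part $(\bar x_1,\bar x_2)$ is handled by \eqref{ode}, while on $(-\infty,\bar x_1]$ the function $W_1=\mathcal M W_1$ is affine with slope $\lambda$, so $\mathcal A W_1-rW_1+x-s$ is increasing (again because $1-\lambda r>0$) and reduces at $\bar x_1$ to $-r\varphi_1(\bar x_1)+\bar x_1-s\le 0$, equivalently $\varphi_1''(\bar x_1)\ge 0$, which is encoded by \eqref{NE21}; the boundary condition $W_1=h=ax$ on $\{V_2=k\}=[\bar x_2,\infty)$ holds by construction.

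I would then verify the two strict global sign conditions that pin down the continuation regions: $W_2+bx>0$ on $(-\infty,\bar x_2)$ and $\mathcal M W_1-W_1<0$ on $(\bar x_1,\bar x_2)$. Writing $\rho(x):=\varphi_2(x)+bx$ one has $\rho(\bar x_2)=\rho'(\bar x_2)=0$ and $\rho''(\bar x_2)=\tfrac{2}{\sigma^2}((1-br)\bar x_2-q)$, so the pair of inequalities in \eqref{NE22} jointly control the sign of $\rho$ down to $\bar x_1$, while the strict form of \eqref{NE21} gives $\varphi_1''(\bar x_1)>0$ and hence the strict P1 inequality. Finally, admissibility of $(u^*,\eta^*)$ is shown exactly as in Proposition \ref{propequi}: the controlled process is confined to $[\bar x_1,\bar x_2]\cup\{x\}$, so $\sup_{t\ge 0}\abs{X_t}\in L^p(\Omega)$, and writing each $\tau_k$ as a sum of i.i.d. copies of the exit time of $\bar x_2+\sigma W$ from $(\bar x_1,\bar x_2)$ reduces $\mathbb E_x[\sum_k e^{-r\tau_k}(c+\lambda\abs{\delta_k})]$ to a convergent geometric series, here with the constant impulse $\delta_k\equiv\bar x_2-\bar x_1$.

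The main obstacle I expect is not the pasting identities, which are built into \eqref{2pasting system} and solved through $G(\hat w)=0$, but rather the two strict global inequalities of the previous paragraph, i.e. propagating the local information at the free boundaries $\bar x_1,\bar x_2$ to the whole interval $(\bar x_1,\bar x_2)$. This is precisely the role played by the extra hypotheses \eqref{NE21}-\eqref{NE22}, and, as in Proposition \ref{propequi}, it is most cleanly carried out through auxiliary lemmas controlling the signs of $\rho=W_2+bx$ and of $\mathcal M W_1-W_1$.
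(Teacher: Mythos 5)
Your outline follows the paper's proof of Proposition \ref{propequi2} essentially step for step: apply Theorem \ref{oksendal-sulem verification}; check the QVI system region by region, with the left inequality of \eqref{NE22} giving $(1-br)\bar x_2 - q \ge 0$ on $[\bar x_2,\infty)$, the ODE \eqref{ode} handling $(\bar x_1,\bar x_2)$, the increasing-affine argument at $\bar x_1$ on P1's side, and $\mathcal H W_2 = W_2$ on $(-\infty,\bar x_1]$ by construction; then transfer admissibility from Proposition \ref{propequi}. The one place where you genuinely diverge is inside the auxiliary lemmas you defer to. The paper (Lemmas \ref{lemma2 w1 qvi} and \ref{lemma22}) proves the global sign conditions $\mathcal M W_1 - W_1 < 0$ on $(\bar x_1,\infty)$ and $W_2 + bx > 0$ on $(-\infty,\bar x_2)$ not by propagating local information at the free boundaries, but by the substitutions $z = e^{\theta(x-\bar x_1)}$ and $z = e^{\theta(\bar x_2 - x)}$, which turn each inequality into (a lower bound by) a quadratic in $z$ factorizing as $(z-1)$ times a second factor whose sign is controlled directly by \eqref{NE21}, respectively \eqref{NE22}. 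In particular, \eqref{NE21} is strictly stronger than your reading of it as the curvature condition $\varphi_1''(\bar x_1)\ge 0$: unwinding the lemma's proof, \eqref{NE21} is equivalent to $\frac{1-\lambda r}{\theta}(\ln\hat w - 1) - (1-ar)\bar x_2 - cr + s > 0$, which exceeds the condition $\varphi_1''(\bar x_1)\ge 0$ by the fixed margin $\frac{1-\lambda r}{\theta}$, and it is exactly this extra strength that yields $\Gamma' = \varphi_1' - \lambda > 0$ on the \emph{whole} interval $(\bar x_1,\bar x_2)$ (property (iii) in Lemma \ref{lemma2 w1 qvi}), hence the unique global maximum of $\Gamma$ at $\bar x_2$ and the strict P1 inequality. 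Local curvature $\varphi_1''(\bar x_1)>0$ alone does not rule out $\Gamma'$ returning to zero inside $(\bar x_1,\bar x_2)$, so your ``hence the strict P1 inequality'' is a gap as stated; it is closed by the factorization argument (or an equivalent global one), and with the lemmas as in the paper's appendix your outline becomes a complete proof. The same remark applies, less critically, to your treatment of $\rho = W_2+bx$: the two inequalities in \eqref{NE22} are used in Lemma \ref{lemma22} as coefficient conditions in the factorized quadratic, not as second-order information at $\bar x_2$ to be propagated.
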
 
\begin{proof} We proceed as for the previous equilibrium, by checking all the conditions necessary to apply the verification theorem. 
First of all, the functions $W_1,W_2$ satisfy by construction all required regularity properties, i.e. $W_1 \in C^2\!\left( (-\infty, \bar x_2)/ \{ \bar x_1\}\right) \cap C^1\left((-\infty, \bar x_2)\right) \cap C(\mathbb{R})$, $W_2 \in C^2\left((\bar x_1, \infty )/ \{ \bar x_2\}\right) \cap C^1\left((\bar x_1, \infty)\right) \cap C(\mathbb{R})$ and both have at most polynomial growth.

Next, Lemmas \ref{lemma2 w1 qvi} and \ref{lemma22} give
\[\{ \delta(x) \} = \argmax_{\delta \in Z} \left\{ W_1(x+ \delta) - c - \lambda \abs{\delta}\right\}\]
together with
\[ \mathcal{M} W_1(x) - W_1(x) \leq 0, \quad W_2(x) +bx \geq 0,\]
for all $x \in \mathbb R$. 
Let $x \in \{ \mathcal{M} W_1 - W_1 = 0 \} = ( -\infty, \, \bar x_1]$. By definition of $\mathcal{H}W_2$ we have:
\begin{align*}
\mathcal{H}W_2(x)  &= W_2(x + \delta(x) ) + d + \gamma \abs{\delta(x)} = W_2(\bar x_2) + d + \gamma (\bar x_2 - x) \\
&= -b \bar x_2 + d + \gamma (\bar x_2 - x) = W_2(x). 
\end{align*}
Now, in order to prove that 
$$
\max\{ \mathcal{A} W_2(x) - rW_2(x) + q - x, \, -bx - W_2(x) \} = 0 ,\quad x \in (\bar x_1, \, \infty),
$$
we consider two separate cases as for the previous equilibrium. First, for $x \in (\bar x_1, \, \bar x_2)$, we have $-bx - W_2(x) <0$ and 
$$
\mathcal{A} W_2(x) - r W_2(x) + q - x = \mathcal{A} \varphi_2(x) - r \varphi_2(x) + q - x = 0 
$$ 
since $\varphi_2$ is solution to the ODE \eqref{ode}, so the maximum between the two terms is zero. Second, we know that $-bx = W_2(x)$ for $x \in [\bar x_2, \, \infty)$, then we have to check that $\mathcal{A}W_2(x) - rW_2(x) + q - x \leq 0$ for any $x \in [\bar x_2, \, \infty)$. Since $\mathcal{A}W_2(x) = 0$, we are reduced to verify the inequality
\begin{equation} \label{ 1 - br greater 2}
\mathcal{A}W_2(x) - r W_2(x) + q - x = brx + q - x = q - (1 - br)x \leq 0.
\end{equation}
Given that $x \mapsto q - ( 1 - br) x$ is decreasing due to $1 - br > 0$, it suffices to show the inequality above at the point $\bar x_2$, i.e. $(1 - br)\bar x_2 - q\geq 0 $, which is implied by \eqref{NE22}.\\
To complete the verification that $W_1,W_2$ are solutions to the QVI system, we show that in $-bx - W_2(x) = 0$ implies $ W_1(x) = ax$ and that $-bx - W_2(x) < 0$ yields $\max \{\mathcal{A} W_1(x) - r W_1(x) + x - s, \mathcal{M} W_1(x) - W_1(x)\} = 0$.
The first implication holds by definition. For the second one, we have to prove 
$$
 \max \{ \mathcal{A}W_1(x) - rW_2(x) +x - s, \, \mathcal{M}W_1(x) - W_1(x)\} = 0, \quad x \in ( - \infty, \, \bar x_2).
$$ 
For $x \in (\bar x_1, \, \bar x_2)$ we have $\mathcal{M} W_1(x) - W_1(x) < 0$ and as before
$$
\mathcal{A}W_1(x) - rW_1(x) + x - s = \mathcal{A}\varphi_1(x) - r \varphi_1(x) + x - s = 0
$$
 as $\varphi_1$ is solution to the ODE \eqref{ode}. For any $x \in (-\infty, \bar x_1]$ we know that $ \mathcal{M}W_1(x) - W_1(x) = 0$, hence, we have to check that 
$$
 \mathcal{A}W_1(x) - rW_1(x) + x - s = (1 - \lambda r)x + cr - s - (a - \lambda)r\bar x_2 \leq 0, \quad x \in ( -\infty, \, \bar x_1].
 $$
To do so, we notice that, since by assumption $ 1 - \lambda r > 0$, the function $x \mapsto (1 - \lambda r)x + cr - s - (a - \lambda)r\bar x_2$ is increasing in $x$. Therefore, we only need to prove that the desired inequality for $ x = \bar x_1$, i.e.
 $$
(1 - ar)\bar x_2 - \frac{ 1 - \lambda r}{\theta}\ln w + cr - s \leq 0 ,
 $$
 which is given by Lemma \ref{lemma2 w1 qvi}. Finally, the optimal strategies are $x$-admissible for every $x \in \mathbb{R}$. Indeed, by construction, the controlled process never exits from $(\bar x_1, \bar x_2) \cup \{ x \}$, so that $\sup_{t \geq 0} \abs{X_t} \in L^p(\Omega)$ for all $p \ge 1$. It is easy to check that all the other conditions are satisfied as in the first type of equilibrium. 
\qed
\end{proof}

\subsection{Numerical experiments} \label{numerics}

In this section we will give some numerical illustrations of the equilibrium payoff functions and a selection of comparative statics regarding the two types of Nash equilibria identified in the previous subsections.\footnote{The numerical results in this section were obtained using R, rootSolve package.} 
It is useful to remember that in order for the solutions to the QVI system to be Nash equilibria of one of the two types, they have to satisfy either \eqref{NE11}-\eqref{NE12} or \eqref{NE21}-\eqref{NE22}. Before we start, let us recall the meaning of the parameters involved:
\begin{itemize}
\item $s$ and $q$ might be interpreted as exogenous costs and gains respectively. Note that P1's running payoff $f(x) = x -s$, hence, in order to make profit P1 needs $x$ to be greater than $s$, which can fairly be considered as P1's expense, an analogous reasoning applies for P2, but in the opposite direction since $g(x) = q - x$;
\item $a$ and $b$ can be considered as terminal payoff sensitivity to the underlying process, $X_t$, as we have $h(x) = ax$ and $k(x) = -bx$ respectively;
\item at each intervention time P1 faces a fixed cost, $c$, while P2 receives a fixed gain, $d$; 
\item moreover,  $\lambda$ is P1's proportional cost parameter, while $\gamma$ is P2's proportional gain parameter;
\item finally, $r$ is the discount rate, the same for both players, and $\sigma$ is the volatility of the state variable.
\end{itemize}

\paragraph{Equilibrium with no simultaneous interventions.} In order to fulfill \eqref{NE11}-\eqref{NE12}, we can observe that both inequalities are satisfied for high enough values of $\tilde w$. It is possible to show via graphical analysis  that $\tilde w$, solution to \eqref{wequation}, is decreasing in $a, b, s$ and increasing in $c, d, q, \lambda$ and $\gamma$. Therefore, we have chosen small values of $a, b$ and $s$ to obtain the first equilibrium, Scenario A, whereas for Scenario B we have looked for higher values and increased $q$ and $d$ in order to find an equilibrium. 
\begin{center} \begin{tabular}{ c | c | c | c | c | c |c | c | c | c | c || c | c | c} 
   & $r$ & $\sigma$ & $c$ & $d$ & $\lambda$ & $\gamma$ & $a$ & $b$ & $s$ & $q$ & $\bar x_1$ & $x_1^*$ & $\bar x_2$\\
 \hline Scenario A & 0.01 & $5$ & 500 & 100 & 20 & 40 & 0 & 0 & 1 & 5 & -31.11 & 16.95 & 34.84\\
  Scenario B & 0.01 & $1.5$ & 50 & 150 & 10 & 15 & 2 & 8 & 10 & 10 & 4.95 & 14.26 & 18.18.\\
\end{tabular}\end{center}
where $\bar x_1$, $x_1^*$ and $\bar x_2$ are as in Proposition \ref{propequi}.
\begin{figure}[H]
\centering
\captionsetup[subfigure]{labelformat=simple}
\caption{Type I Equilibria}
\begin{subfigure}[H]{0.49\textwidth}
\includegraphics[width=\textwidth]{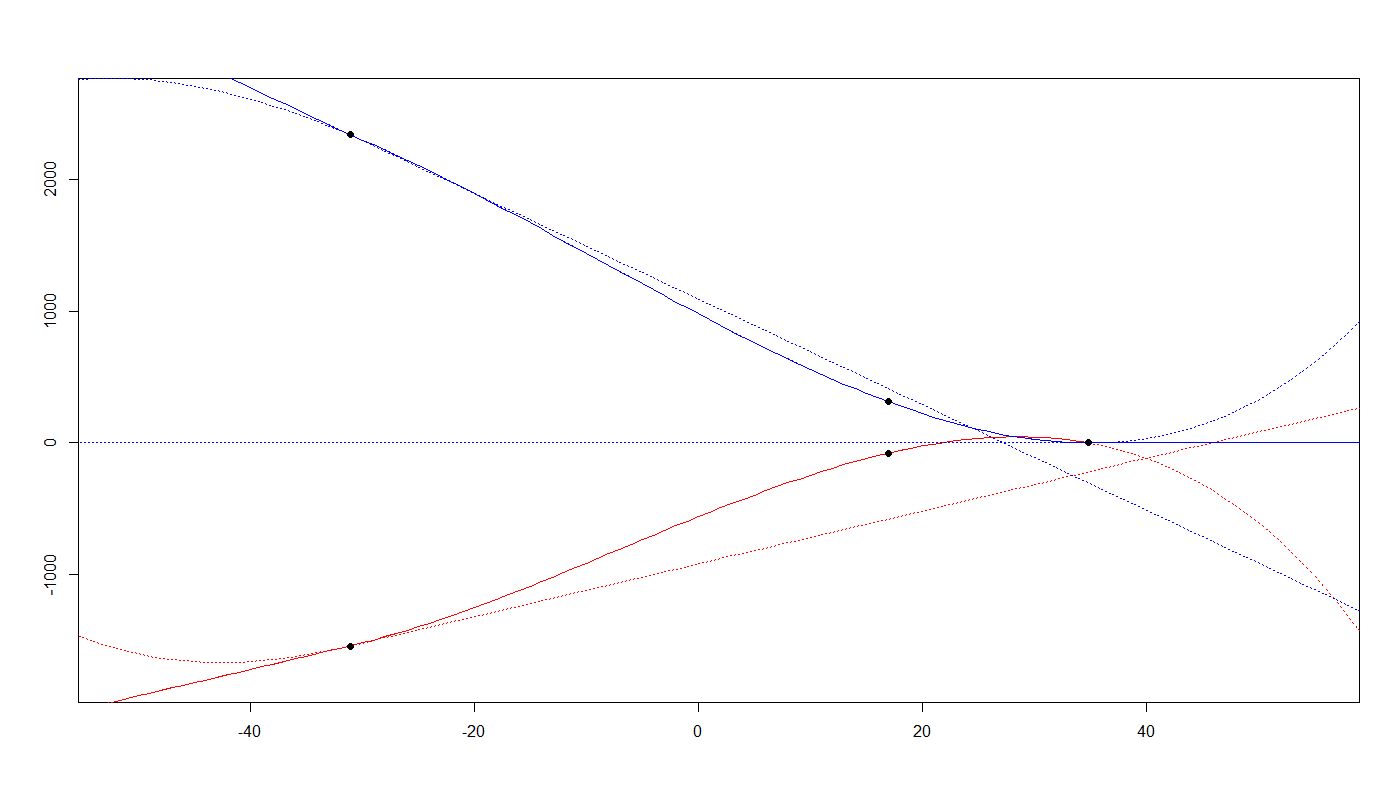}
\caption{$ x \!\mapsto\! V_1(x)$ in red, $ x \!\mapsto\! V_2(x)$ in blue for Scenario A \label{ne11graph}}
\end{subfigure}
\begin{subfigure}[H]{0.49\textwidth}
\includegraphics[width=\textwidth]{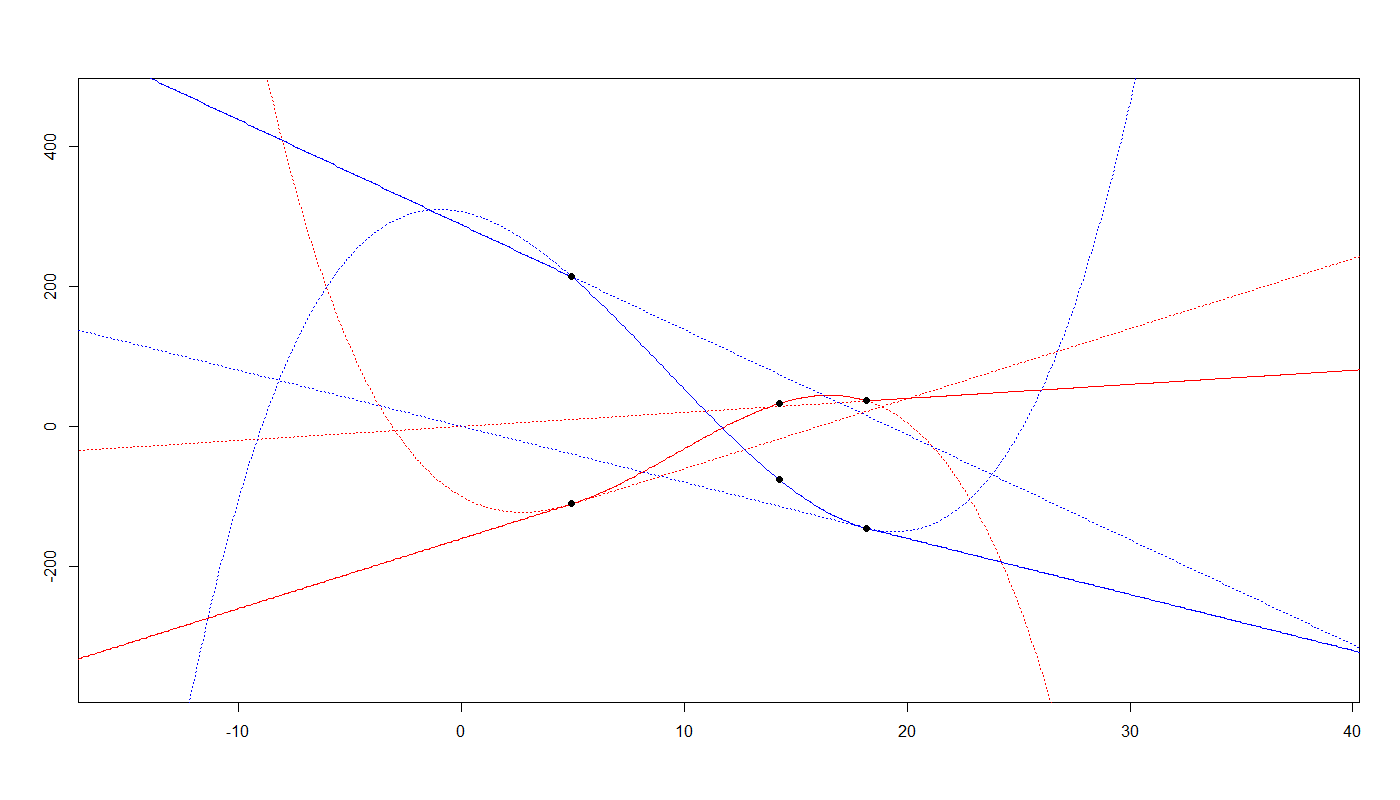}
\caption{$ x \!\mapsto\! V_1(x)$ in red, $ x \!\mapsto\! V_2(x)$ in blue for Scenario B \label{ne12graph}}
\end{subfigure}
\begin{subfigure}[H]{0.49\textwidth}
\includegraphics[width=\textwidth]{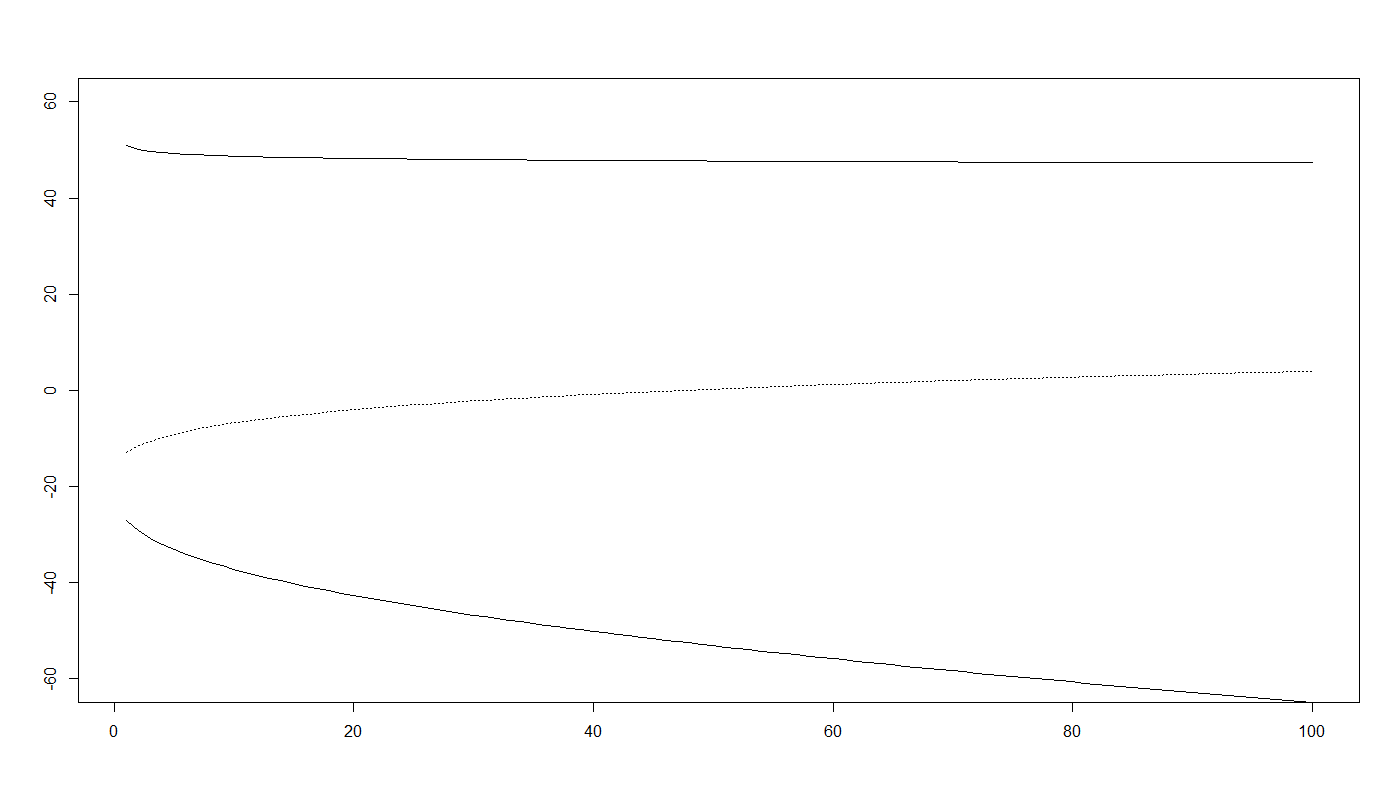}
\caption{$ c \mapsto \bar x_1, x_1^*, \bar x_2$ for Scenario B \label{cs1c} }
\end{subfigure}
\begin{subfigure}[H]{0.49\textwidth}
\includegraphics[width=\textwidth]{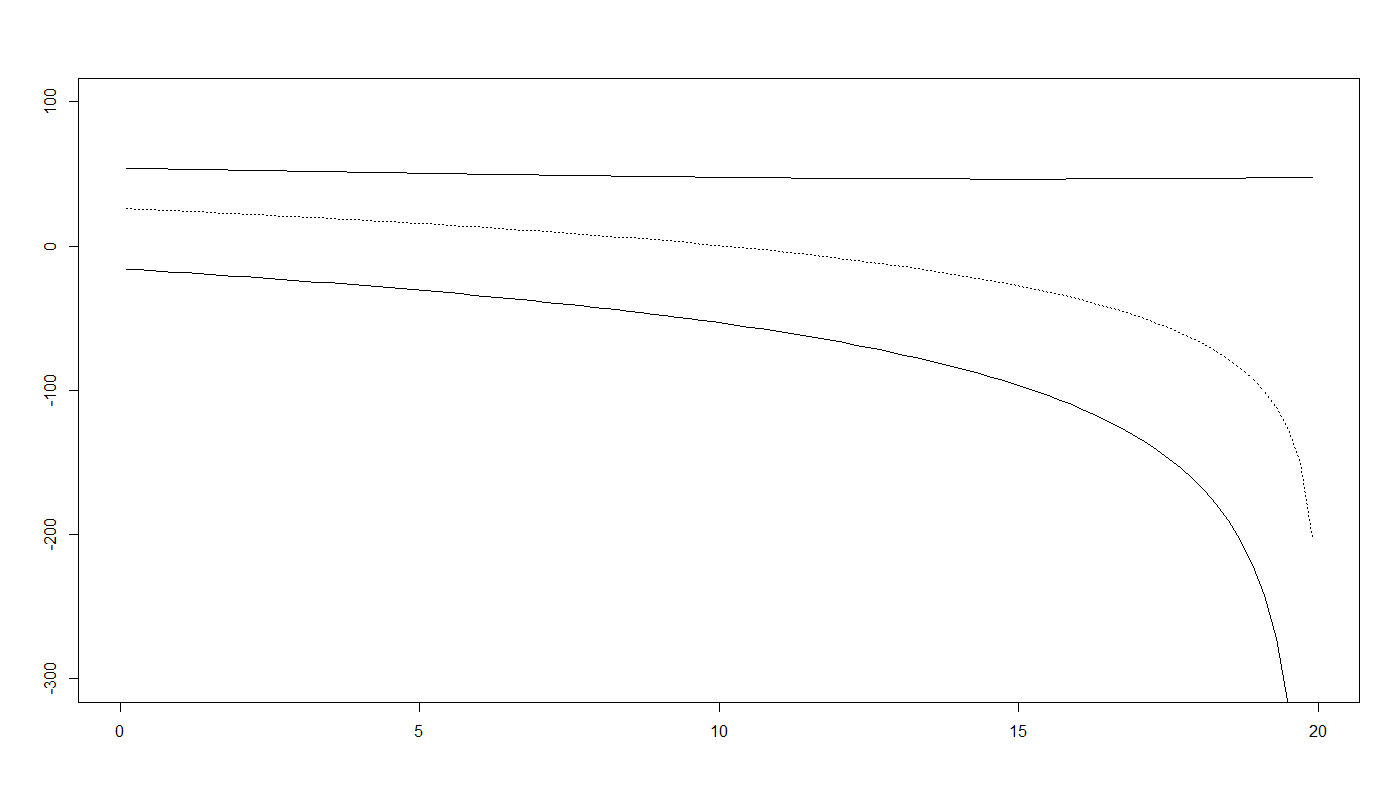}
\caption{$ \lambda \mapsto \bar x_1, x_1^*, \bar x_2$ for Scenario B \label{cs1lambda}}
\end{subfigure}
\begin{subfigure}[H]{0.49\textwidth}
\includegraphics[width=\textwidth]{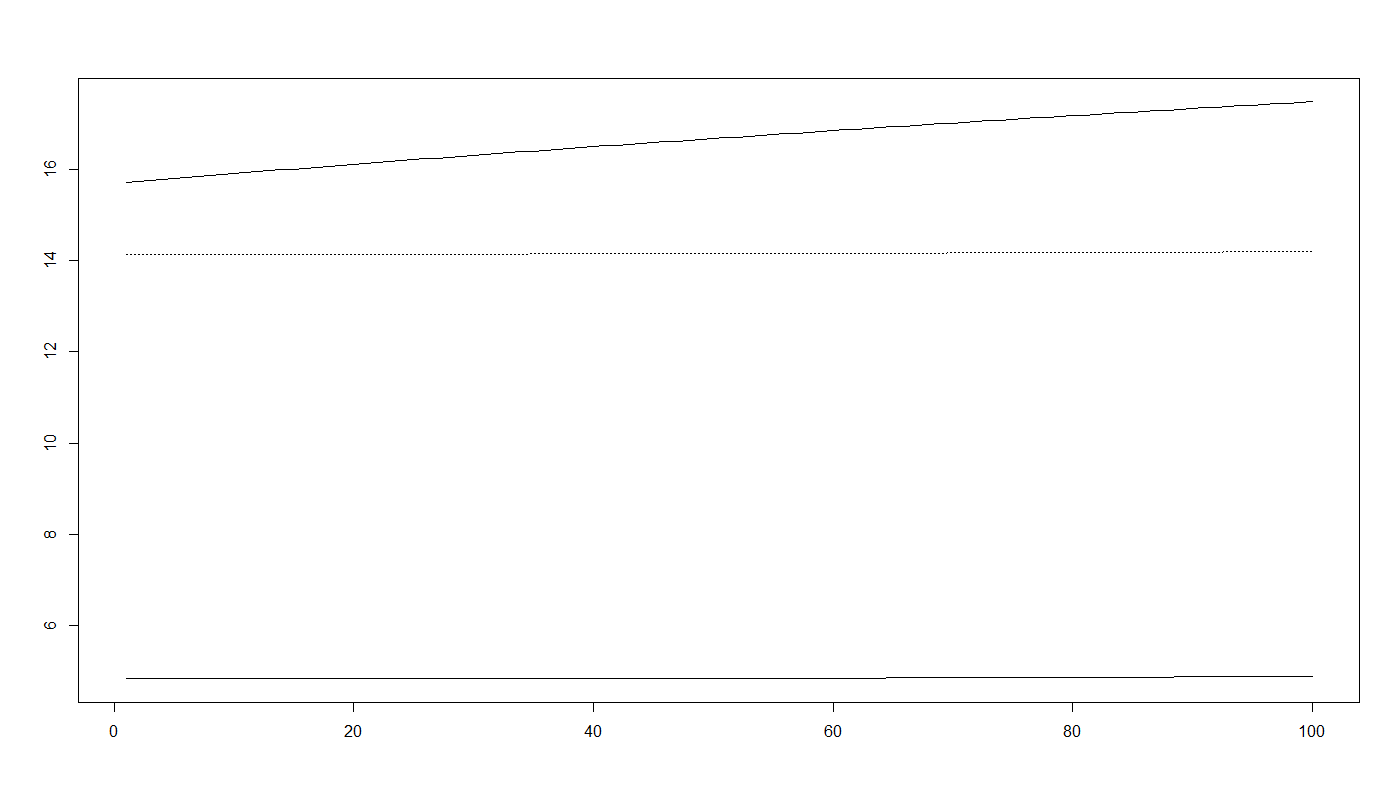}
\caption{$ d \mapsto \bar x_1, x_1^*, \bar x_2$ for Scenario B \label{cs1d} }
\end{subfigure}
\begin{subfigure}[H]{0.49\textwidth}
\includegraphics[width=\textwidth]{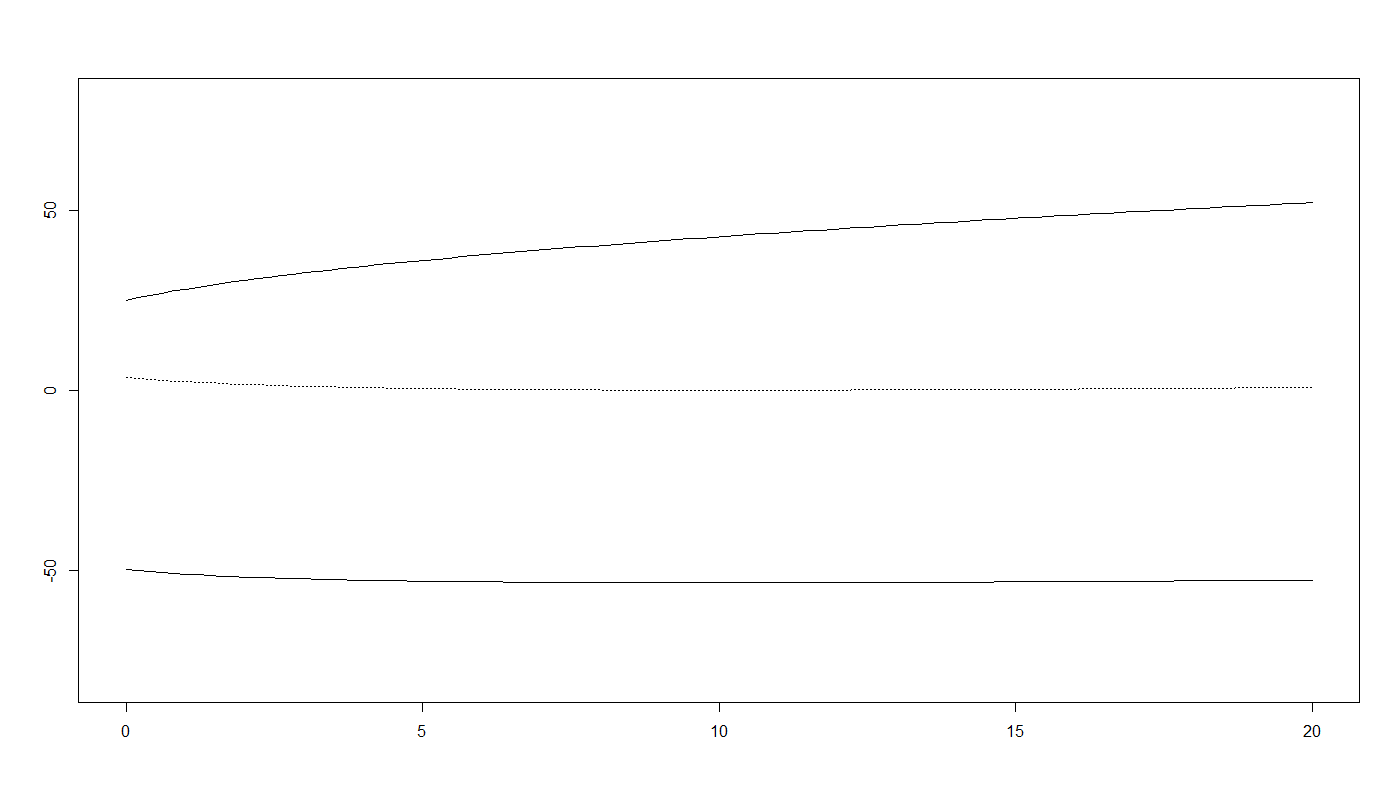}
\caption{$ \gamma \mapsto \bar x_1, x_1^*, \bar x_2$ for Scenario B \label{cs1gamma}}
\end{subfigure}
\end{figure}
Figures \ref{ne11graph}-\ref{ne12graph} show how the equilibrium payoff functions behave in the selected scenarios, with the dashed lines showing the smooth-pasting of the three components of the payoff in \eqref{system1} and \eqref{system2}. From Figure \ref{ne11graph} to Figure \ref{ne12graph} we can see how a reduction in the volatility seems to shrink the continuation region, hence, the players become more cautious, reducing their intervention regions when there is more uncertainty. Another interesting fact to note is how the relative distance between $\bar x_1$ and $\bar x_2$ becomes smaller. This can be due to the increase in P2's terminal payoff sensitivity, $b$, and the increase in P1's exogenous cost, $s$. In one direction, P2 is losing more money when she decides to terminate the game, therefore she will not stop when the state process value is too high, hence she reduces her threshold $ \bar x_2$. In the other,  since P1 is facing higher exogenous costs, she pushes the target, $x_1^*$, as far as she can, making sure the state process is not going too low, rising the barrier $\bar x_1$.

Figures \ref{cs1c}-\ref{cs1lambda}-\ref{cs1d}-\ref{cs1gamma} represent some comparative statics of the thresholds $\bar x_1, \, x_1^*$ and $\bar x_2$ for Scenario B. Similar graphs hold for Scenario A as well, therefore they are omitted. First, in Figure \ref{cs1c} we can observe how an increase in P1's fixed cost expands the gap between $\bar x_1$ and $x_1^*$. The more P1 has to pay at any intervention time, the less often she will intervene, lowering the threshold, $\bar x_1$, and increasing the target, $x_1^*$. This allows P2, who does not like high values of $x$, to slightly lower her threshold, $\bar x_2$, so as to pay less when she will stop the game. In Figure \ref{cs1lambda} the behavior with respect to the proportional cost is quite different. P1 will reduce the interventions for higher $\lambda$,  with the distance between $\bar x_1$ and $x_1^*$ left nearly unchanged, while P2 keeps the barrier at a constant level $\bar x_2$. In particular, P1 tends to never intervening when the proportional cost reaches its maximum, set by the condition $1 - \lambda r >0$. This behavior shows how P1 is quite indifferent to changes in the proportional cost when this is not too big while she is really sensitive once it gets high. Finally, in Figures \ref{cs1d}-\ref{cs1gamma} we can see that, when P2's gains more each time P1 intervenes increases, P2 is happy playing for longer, heightening the threshold $\bar x_2$, since she is receiving more money.

\paragraph{Equilibrium where P1 induces P2 to stop.} To satisfy \eqref{NE21}-\eqref{NE22}, we want $\hat w$ to be neither too high nor too low, in particular, high $\lambda$ should help in \eqref{NE21} as high $\hat w$ in \eqref{NE22}. As before, via graphical analysis it is possible to show that $\hat w$, solution to \eqref{G(w) second case}, is decreasing in $a, b, s$ and increasing in $c, d, q, \lambda$ and $\gamma$. Therefore, the first instance of Nash equilibrium, Scenario B, has been selected to have high $\lambda$ and $\hat w$, choosing high values of $c, d, q$ and $\gamma$ and low values of $b$ and $s$, whereas for Scenario A we have looked for lower values of  $\lambda$ and adapted the others.
\begin{center} \begin{tabular}{ c | c | c | c | c | c |c | c | c | c | c || c | c } 
   & $r$ & $\sigma$ & $c$ & $d$ & $\lambda$ & $\gamma$ & $a$ & $b$ & $s$ & $q$ & $\bar x_1$ &  $\bar x_2$\\
 \hline Scenario A & 0.01 & 5 & 100 & 100 & 25 & 10 & 24 & 9 & 45 & 0 & 22.56 & 32.68 \\
  Scenario B& 0.01 & 1.5 & 150 & 125 & 80 & 25 & 70 & 15 & 10 & 15 & 14.27 & 25.72. \\
\end{tabular}\end{center}
where $\bar x_1$ and $\bar x_2$ are as in Proposition \ref{propequi2}.
\begin{figure}[H]
\centering
\captionsetup[subfigure]{labelformat=simple}

\caption{Type II Equilibria}
\begin{subfigure}[H]{0.49\textwidth}
\includegraphics[width=\textwidth]{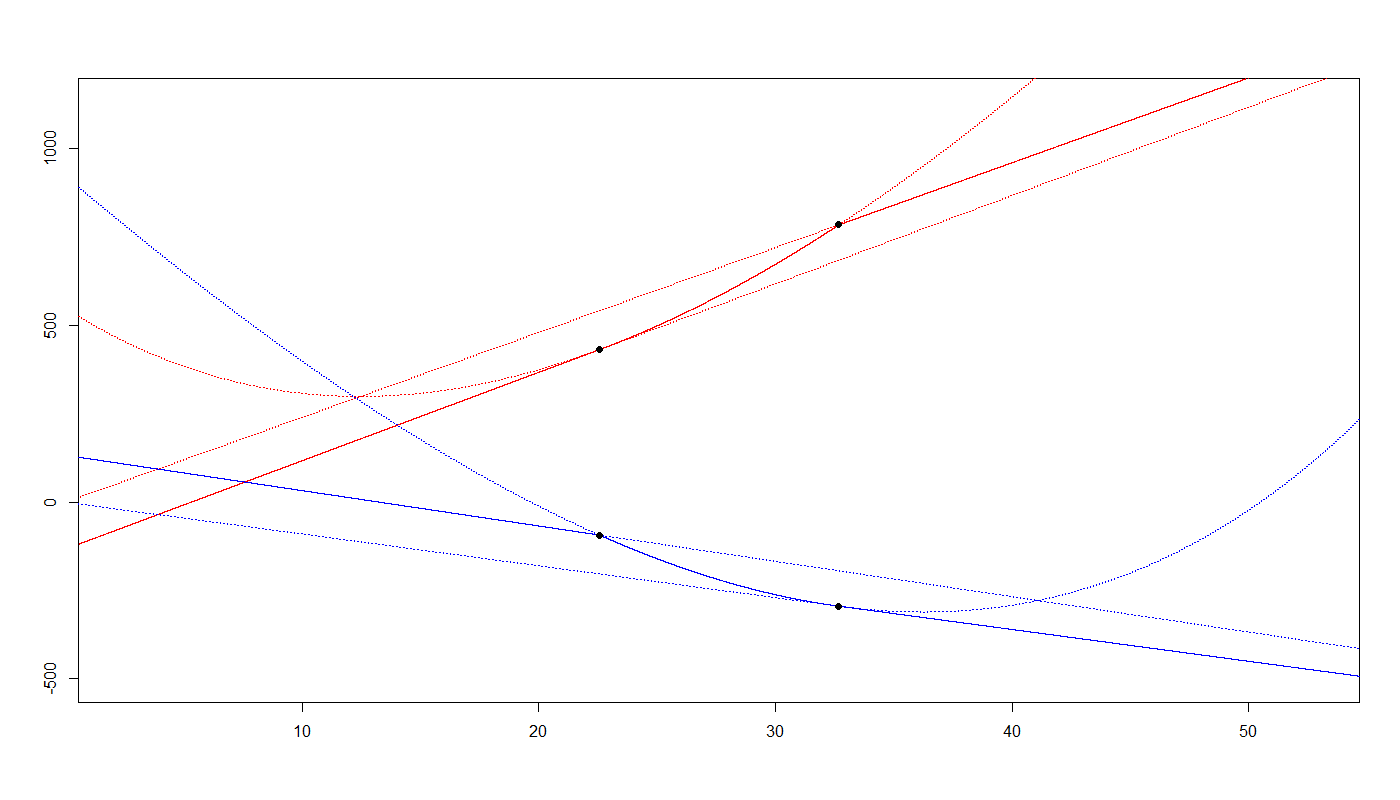}
\caption{$ x \!\mapsto\! V_1(x)$ in red, $ x \!\mapsto\! V_2(x)$ in blue for Scenario A \label{ne21graph}}
\end{subfigure}
\begin{subfigure}[H]{0.49\textwidth}
\includegraphics[width=\textwidth]{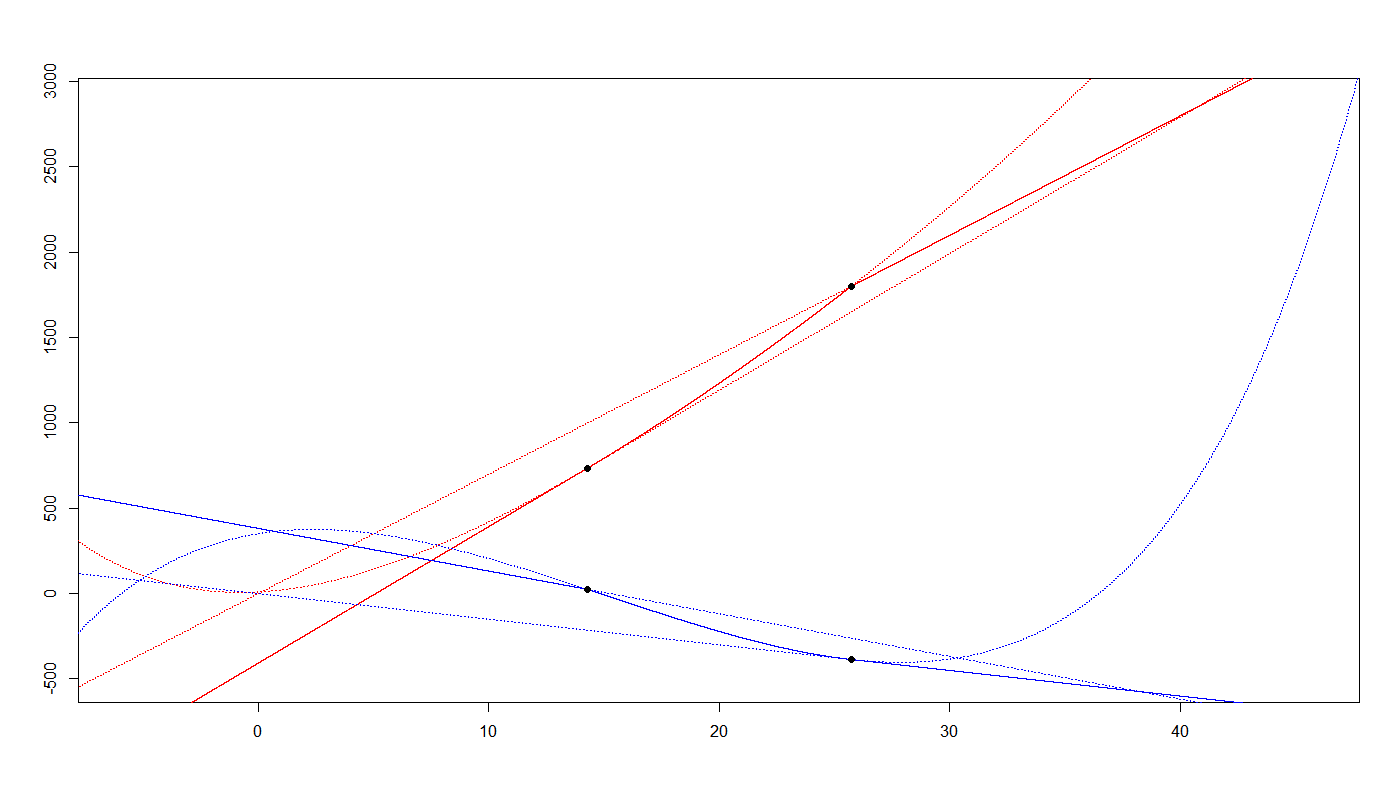}
\caption{$ x \!\mapsto\! V_1(x)$ in red, $ x \!\mapsto\! V_2(x)$ in blue for Scenario B \label{ne22graph}}
\end{subfigure}
\begin{subfigure}[H]{0.49\textwidth}
\includegraphics[width=\textwidth]{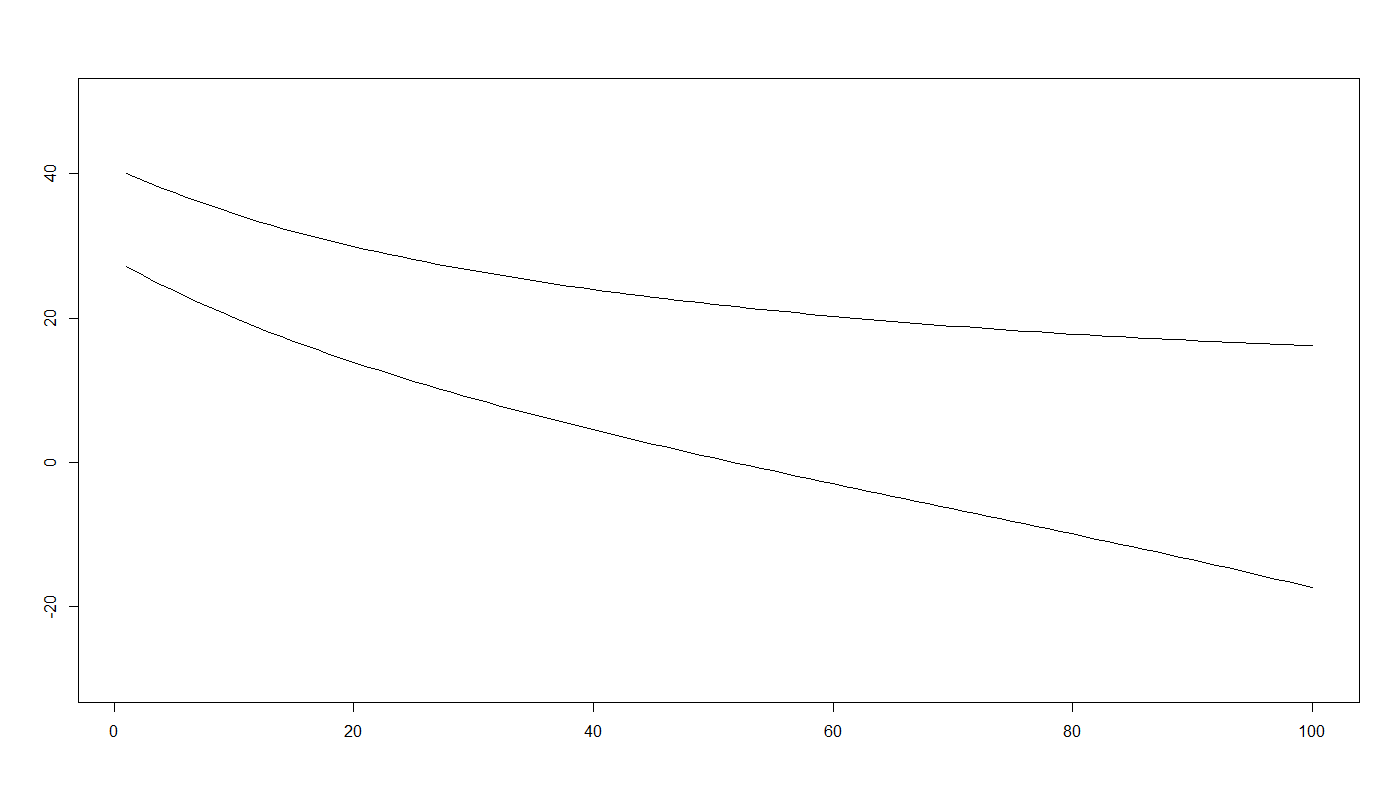}
\caption{$ c \mapsto \bar x_1, \bar x_2$ for Scenario B \label{cs2c}}
\end{subfigure}
\begin{subfigure}[H]{0.49\textwidth}
\includegraphics[width=\textwidth]{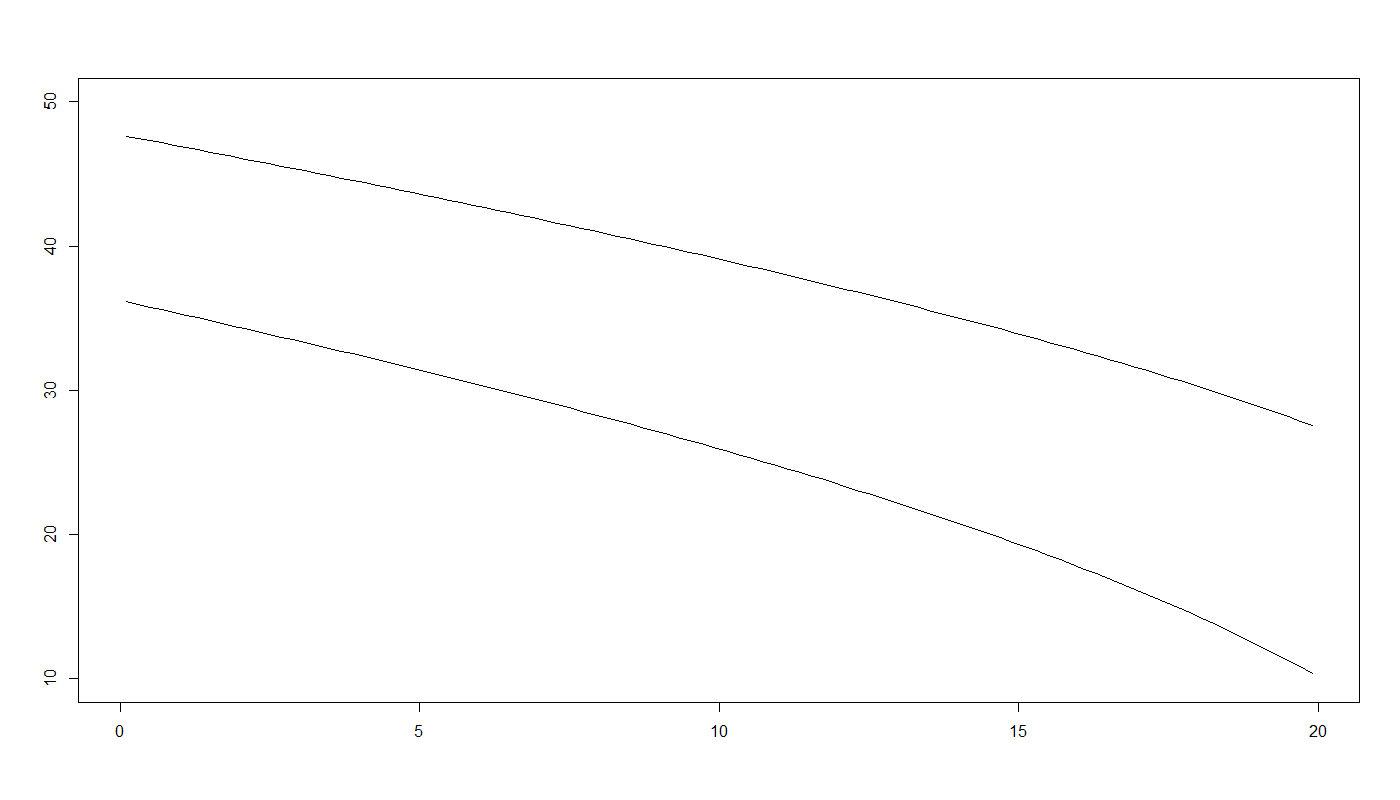}
\caption{$ \lambda \mapsto \bar x_1,  \bar x_2$ for Scenario B \label{cs2lambda} }
\end{subfigure}
\end{figure}
\clearpage
\begin{figure}\captionsetup[subfigure]{labelformat=simple}
\ContinuedFloat 
\begin{subfigure}[H]{0.49\textwidth}
\includegraphics[width=\textwidth]{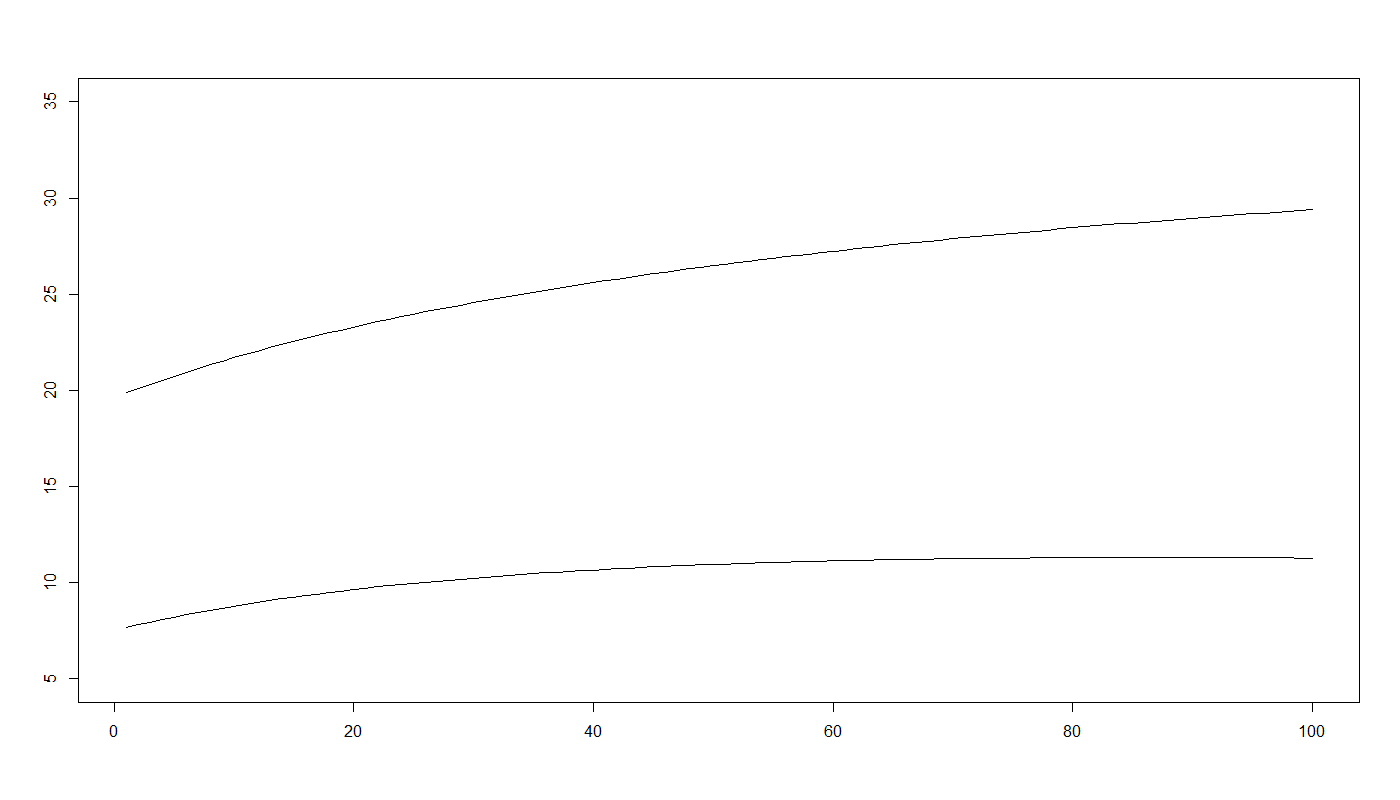}
\caption{$ d \mapsto \bar x_1,  \bar x_2$ for Scenario B \label{cs2d}}
\end{subfigure}
\begin{subfigure}[H]{0.49\textwidth}
\includegraphics[width=\textwidth]{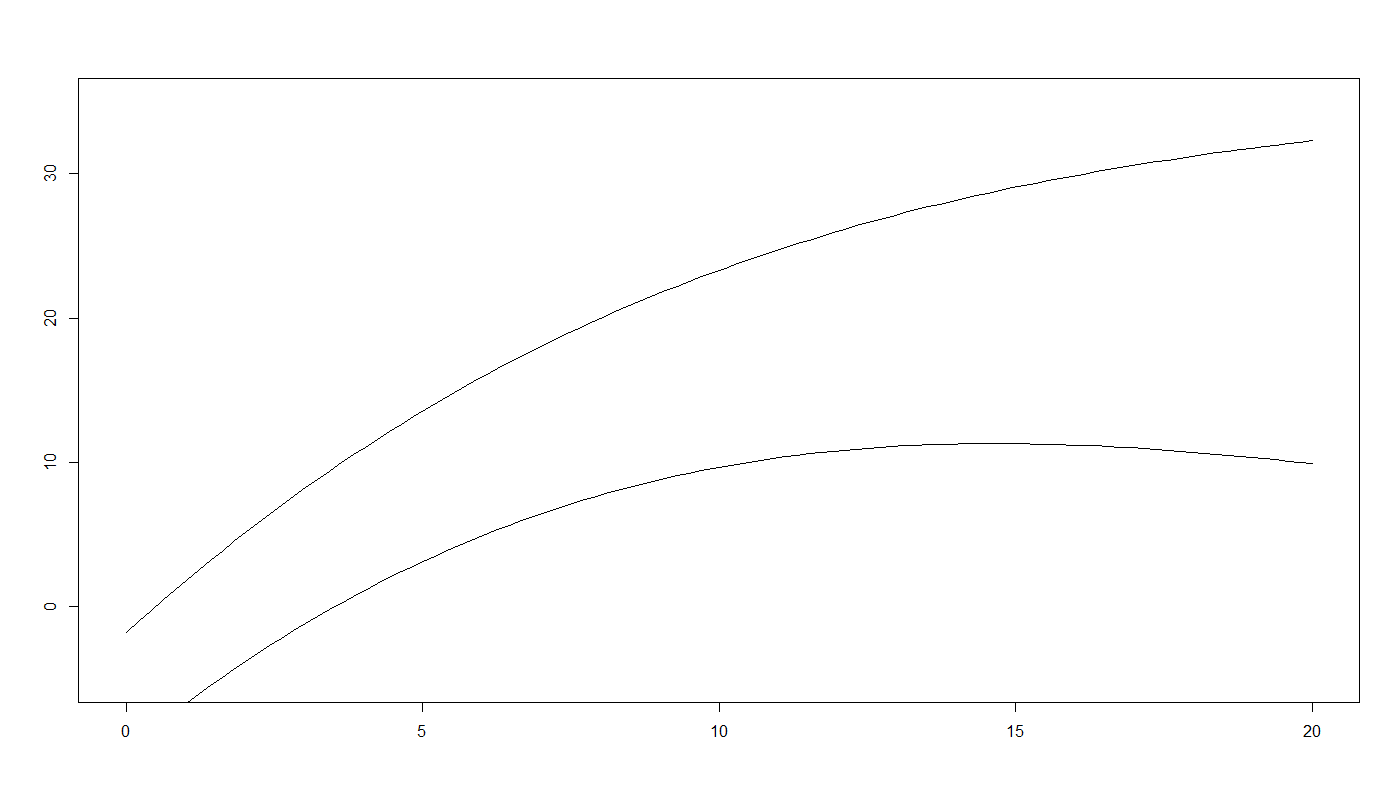}
\caption{$ \gamma \mapsto \bar x_1,  \bar x_2$ for Scenario B \label{cs2gamma}}
\end{subfigure}
\end{figure}
As before, Figure \ref{ne21graph}-\ref{ne22graph} represent the equilibrium payoff functions in the selected examples. First, we can observe that the continuation region in Scenario A is shifted to the right with respect to the one in Scenario B and we can observe that its width has not changed much from one case to the other. Furthermore, we can notice that Scenario B is more profitable for P2 and less profitable for P1. These two facts might be explained by the following changes from Scenario B to Scenario A:  P1's exogenous cost, $s$, increases, so P1 cannot tolerate low levels of $x$, increasing her threshold $\bar x_1$. Moreover, although P2's gains, $q$, $d$ and $\gamma$, decrease we do not see her threshold scale down as it would be expected as the game is now less profitable. This is probably due to $b$'s reduction, which leads P2 to stop for higher values of $\bar x_2$ since she is going to lose less when she decides to stop. 

Now, let us spend some words on the comparative statics in Figures \ref{cs2c}-\ref{cs2lambda}-\ref{cs2d}-\ref{cs2gamma}.  When P1's costs, $c$ and $\lambda$, increases, Figure \ref{cs2c}-\ref{cs2lambda}, P1 would intervene for lower values of $x$ and the distance $\bar x_2 - \bar x_1$ will increase, even though $\bar x_2$ gets lower as well. This can be explained as follows, with the costs increasing, P1 is less willing to intervene, reducing $\bar x_1$, even though this shift allows P2 to lower her threshold, $\bar x_2$, since she likes low values of $x$. When the fixed gain, $d$, rises, Figure \ref{cs2d}, P2 can afford the game to run for longer, increasing $\bar x_2$, as she will gain more when P1 will make her stop. Moreover, this makes P1 heighten $\bar x_1$ in order to limit the proportional costs increment. Lastly, we have a similar behavior to the one described above for the proportional gain, $\gamma$, Figure \ref{cs2gamma}. The main difference is the speed with which the distance between the thresholds increases, higher for proportional gain increments. This happens because, in case of proportional gain increments, P2 is more incentivized to push $\bar x_2$ far away since the bigger the impulse the more the revenue, whereas an analogous behavior in case of fixed gain increments would lead to a loss in the terminal payoff outrunning the additional profit due to the fact that the gain, $d$, does not depend on the intensity of the impulse P1 is playing while the losses are increasing, since they depend on P2's threshold, $-b\bar x_2$.

\paragraph{Comparison between the two equilibria} We conclude with a short discussion on the reasons why P1 would play aggressively, forcing P2 to stop. To do so we compare first the two scenarios A and B in both equilibria. So, going from Type I to Type II we see a reduction in the proportional gain, $\gamma$, an increase in P1 terminal payoff sensitivity, $b$, and a reduction in P2's exogenous gain, $q$, making P2 lower her threshold, $\bar x_2$, to reduce the losses at the end of the game. Then, P1's exogenous cost, $s$, increases making P1 rise both the threshold and the target, $\bar x_1$ and $x_1^*$ respectively. Furthermore, P1 terminal payoff sensitivity, $a$, increases and, intuitively incentivize P1 to let P2 end the game sooner so to receive the terminal payoff. More specifically, since $\tilde w$ is decreasing in $a$, its increase makes $\ln \tilde w = \theta(\bar x_2 - \bar x_1)$ decrease, hence, since the distance between the two thresholds is now smaller, P1's target, $x_1^*$, is closer to P2's barrier up to the point they coincide, $x_1^*\equiv\bar x_2$. 

Regarding Scenario B, again from Type I to Type II, we observe increments in the terminal payoff sensitivity of the two players, $a$ and $b$, in particular P1's sensitivity rises much more than in the first scenario, hence, P1 is more incentivized to let P2 end the game. Another important change regards the proportional cost, $\lambda$, which is very high in case P1 induces P2 to stop. As we have seen before in the comparative statics in Figure \ref{cs1lambda}, P1 intervenes less and less when the proportional cost becomes higher and higher, so it is more convenient to intervene only once, inducing P2 to stop.

We finally observe that while we have managed to find numerical values for which only one of the two types of Nash equilibria emerges at a time, the problem of whether the two equilibria can coexist remains open.

\bibliography{mybib}
\bibliographystyle{apa}
\appendix

\section{Auxiliary results} 
In this appendix we have gathered some technical results used in the verification parts of Section \ref{game} for both types of Nash equilibrium. We start with two lemmas on the continuations regions in the equilibrium where simultaneous actions are not allowed. 
\begin{lemma} \label{lemma w1 qvi} Let $W_1$ be as in \eqref{system1}. Then we have 
$$
\delta (x) = (x_1^* - x)\mathbf 1_{( -\infty, \, x_1^*]}(x), \quad x \in \mathbb{R}.
$$
Moreover
\begin{eqnarray}\label{cont-P1}
\{\mathcal{M} W_1 - W_1 < 0\} = (\bar x_1, \, \infty)\quad \text{and}\quad  \{\mathcal{M} W_1 - W_1 = 0\} = (- \infty, \, \bar x_1].
\end{eqnarray}
\end{lemma}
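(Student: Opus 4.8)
The plan is to reduce both assertions to a one-variable analysis of the auxiliary function $\Phi(y) := W_1(y) - \lambda y$. Since P1 only ever gains by pushing the state to the right, the intervention operator takes the form $\mathcal{M}W_1(x) = \sup_{y \geq x}\{W_1(y) - c - \lambda(y - x)\} = \lambda x - c + \Phi^*(x)$, where $\Phi^*(x) := \sup_{y \geq x}\Phi(y)$. Consequently the optimal post-intervention target is the maximiser of $\Phi$ over $[x,\infty)$, and $\mathcal{M}W_1(x) - W_1(x) = \Phi^*(x) - \Phi(x) - c$. Thus both statements follow once the shape of $\Phi$ is understood.

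Next I would determine that shape on the three pieces of \eqref{system1}. On $[\bar x_2, \infty)$ one has $\Phi(y) = (a - \lambda)y$, strictly decreasing by \eqref{ass-ab}. On $(-\infty, \bar x_1]$, the explicit form of $\mathcal{M}W_1$ gives $\Phi(y) \equiv \varphi_1(x_1^*) - c - \lambda x_1^* = \Phi(x_1^*) - c$, a constant lying a distance $c$ below the value at $x_1^*$. The crux is the middle piece $(\bar x_1, \bar x_2)$, where $\Phi(y) = \varphi_1(y) - \lambda y$ and $\Phi'(y) = \varphi_1'(y) - \lambda$. Here I would exploit the signs of the reparameterised constants, $C_{11} < 0 < C_{12}$ (which follow from $1 - \lambda r > 0$), to see that $\varphi_1''(y) = \theta^2(C_{11}e^{\theta y} + C_{12}e^{-\theta y})$ changes sign exactly once, from $+$ to $-$; hence $\varphi_1'$ is unimodal and the equation $\varphi_1'(y) = \lambda$ has precisely the two roots $\bar x_1 < x_1^*$ supplied by \eqref{(1)}--\eqref{(2)}. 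This forces $\Phi' > 0$ on $(\bar x_1, x_1^*)$ and $\Phi' < 0$ on $(x_1^*, \bar x_2)$, so $\Phi$ is strictly increasing then strictly decreasing with a single peak at $x_1^*$.

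Combining the three pieces, and using the $C^0$-pasting identities in \eqref{pasting system} to guarantee continuity of $\Phi$ at $\bar x_1$ and $\bar x_2$ (in particular $\Phi(\bar x_1) = \Phi(x_1^*) - c$), I would conclude that $x_1^*$ is the unique global maximiser of $\Phi$, its value strictly exceeding that of every other point. The formula for $\delta(x)$ then follows immediately: for $x \leq x_1^*$ the point $x_1^*$ is feasible and globally optimal, so $\delta(x) = x_1^* - x$; for $x > x_1^*$ the function $\Phi$ is strictly decreasing on $[x,\infty)$, so the supremum is attained at $y = x$ and $\delta(x) = 0$. This is exactly $\delta(x) = (x_1^* - x)\mathbf 1_{(-\infty, x_1^*]}(x)$, and in both regimes the maximiser is unique, as required by \eqref{deltamax}. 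For the region characterisation I would evaluate $\mathcal{M}W_1 - W_1 = \Phi^* - \Phi - c$: on $(-\infty, \bar x_1]$ both $\Phi^* = \Phi(x_1^*)$ and $\Phi = \Phi(x_1^*) - c$, so the difference is $0$; on $(\bar x_1, x_1^*]$ one has $\Phi(x) > \Phi(x_1^*) - c$ while $\Phi^* = \Phi(x_1^*)$, making it strictly negative; and on $(x_1^*, \infty)$ the difference equals $-c < 0$. This yields \eqref{cont-P1}.

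The main obstacle is the unimodality argument for $\Phi$ on $(\bar x_1, \bar x_2)$: everything hinges on showing that $\varphi_1'$ meets the level $\lambda$ exactly at the two pasting points and with the correct monotonicity, which is precisely what the sign pattern $C_{11} < 0 < C_{12}$ and the resulting convex-then-concave profile of $\varphi_1$ deliver. The remaining verifications — the boundary matchings and the fact that $\Phi$ stays strictly below $\Phi(x_1^*)$ on $[\bar x_2, \infty)$ — are routine consequences of continuity and the monotonicity established above.
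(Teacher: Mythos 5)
Your proposal is correct and follows essentially the same route as the paper: both proofs study the auxiliary function $W_1(y)-\lambda y$ (your $\Phi$, the paper's $\Gamma$), establish that it is constant on $(-\infty,\bar x_1]$, strictly increasing on $(\bar x_1, x_1^*)$, strictly decreasing on $(x_1^*,\infty)$ via the sign pattern $C_{11}<0<C_{12}$ and the resulting convex-then-concave profile of $\varphi_1$, and then read off the argmax and the sets $\{\mathcal{M}W_1-W_1<0\}$, $\{\mathcal{M}W_1-W_1=0\}$ from the unique global maximum at $x_1^*$ together with the $C^0$-pasting identity $\Phi(\bar x_1)=\Phi(x_1^*)-c$. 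The only cosmetic differences are that the paper locates the inflection point explicitly at $(\bar x_1+x_1^*)/2$ where you argue by unimodality, and it packages the final computation through an auxiliary function $\zeta$ rather than your $\Phi^*-\Phi-c$; these are equivalent.
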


\begin{proof} By a simple change of variable we obtain
\begin{eqnarray*}
\mathcal{M}W_1 = \max_{\delta \geq 0} \{ W_1( x+ \delta) - c - \lambda \delta \} = \max_{y  \geq x} \{ W_1( y) - c - \lambda ( y - x ) \}.
\end{eqnarray*}
Let $\Gamma(y) := W_1(y) - \lambda y$. By definition of $W_1$ we have $\Gamma'(\bar x_1) = \Gamma'( x_1^*) = 0 $. Moreover, the following properties are satisfied:
\begin{enumerate}
\item $\Gamma'(x) = 0$ in $(-\infty, \, \bar x_1]$;
\item $\Gamma'(x) = a - \lambda < 0$ in $[\bar x_2, \, \infty)$;
\item $\Gamma'(x) > 0$ (resp. $<0$) in $( \bar x_1, \, x_1^*)$ (resp. in $( x_1^*, \, \bar x_2)$). \end{enumerate}
Properties (i) and (ii) are easily checked. Regarding (iii), recall that
\[ \Gamma'(x) = \varphi_1'(x) - \lambda = \theta C_{11} e^{\theta x} - \theta C_{12} e^{- \theta x} + \frac{1}{r} - \lambda , \quad x \in (\bar x_1, \bar x_2). \] 
To study its sign, notice that $\Gamma''(x) = \theta^2 C_{11} e^{\theta x} + \theta^2 C_{12} e^{- \theta x} >0$ for all  $x  \in  (\bar x_1,    \tilde x)$,
where $\tilde x$ is such that $e^{\theta \tilde x} = \sqrt{ -C_{12}/C_{11}} = e^{\frac{\theta}{2}(x_1^* + \bar x_1)}$. Moreover, since $\tilde x < x_1^*$ we have $\Gamma''(x_1^*)<0$. Hence, it follows that $\Gamma'(x) > 0$ in $( \bar x_1, \, x_1^*)$, while $\Gamma'(x) < 0$ in $( x_1^*, \, \bar x_2)$.

As a consequence, $\Gamma$ has a unique global maximum in $ x_1^*$, so that
\[
\max_{y \geq x} \Gamma(y) = \left\{ \begin{array}{ll} \Gamma(x_1^*) & \mbox{ in } \; ( -\infty, \, x_1^*] \\ \Gamma(x) & \mbox{ in} \; (x_1^*, \, \infty) \end{array}\right.
\]
which gives
$$
\argmax_{\delta \geq 0} \{ W_1( x+ \delta) - c - \lambda \delta \} = \left\{ \begin{array}{ll} \{ x_1^* - x \} & \mbox{ in } \; ( -\infty, \, x_1^*] \\ \{ 0 \} & \mbox{ in } \; (x_1^*, \, \infty) \end{array} \right. 
$$
This implies the first part of our statement, i.e. $\delta(x) = (x_1^* - x)\mathbf 1_{( -\infty, x_1^*]}(x)$. Now, to show \eqref{cont-P1}, notice first that
\begin{eqnarray*}
&&\mathcal{M} W_1(x) = \left\{ \begin{array}{ll} W_1(x) - \zeta(x) & \mbox{ in } \; ( \bar x_1, \infty) \\ W_1(x) & \mbox{ in } \; ( - \infty, \, \bar x_1], \end{array} \right. 
\end{eqnarray*} 
where we set
$$
\zeta(x) := \left\{ \begin{array}{ll} \varphi_1(x) - \varphi_1(x_1^*) + c + \lambda( x_1^* - x) & \mbox{ in } \; (\bar x_1, \, x_1^*] \\ c & \mbox{ in } \: ( x_1^*, \, \infty) \end{array} \right.
$$
Now, we are going to prove that $\zeta >0$. By $C^0$-pasting in $\bar x_1$ we have $ \varphi_1( \bar x_1) = \varphi( x_1^*) - c - \lambda (x_1^* - \bar x_1)$, therefore 
$$ 
\zeta(x)= \varphi_1(x) - \varphi_1(\bar x_1) - \lambda (\bar x_1 - x) = \Gamma(x) - \Gamma(\bar x_1), \quad x \in (\bar x_1, x_1^*],
$$ 
which is strictly positive since $\Gamma$ is increasing in $(\bar x_1, \, x_1^*]$. Hence, $\zeta$ is strictly positive and we have 
\begin{eqnarray*}
\{\mathcal{M} W_1 - W_1 < 0\} = (\bar x_1, \, \infty), \quad \{\mathcal{M} W_1 - W_1 = 0\} = (- \infty, \, \bar x_1].
\end{eqnarray*}
\qed
\end{proof}   

\begin{lemma} \label{lemma2} Let $W_2$ be as in \eqref{system2}. Assume there exists a solution $(\tilde z, \, \tilde w)$ to \eqref{F(z)}-\eqref{wequation} such that $1 < \tilde z < \tilde w$ and
\begin{eqnarray*}
&& 0 \le  \frac{(1 - br)(1 - \lambda r)(\tilde w^2 - \tilde z)}{\theta \tilde w(1 - ar)(\tilde z + 1)} + \frac{1 - br}{1 - ar}s  - q < \frac{1 - br}{\theta},\\
&& \left(\frac{1 - br}{1 - ar}\left( \frac{(1 - \lambda r)(\tilde w^2 - \tilde z)}{\theta \tilde w(\tilde z + 1)} + s\right) - q\right)(\tilde w - 1)^2 + \frac{1 - br}{\theta}(1 + 2\tilde w \ln \tilde w - \tilde w^2) > 0.
\end{eqnarray*} 
Then, we have 
\begin{eqnarray*}
&& \{x \in \mathbb R :-bx - W_2(x) < 0\} = (- \infty, \, \bar x_2),\\
&& \{x \in \mathbb R :-bx - W_2(x) = 0\} = [\bar x_2, \, \infty).
\end{eqnarray*}
\end{lemma}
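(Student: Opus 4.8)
The plan is to study the sign of the auxiliary function $g_2(x) := W_2(x) + bx$, noting that $-bx - W_2(x) = -g_2(x)$, so that the two claimed identities amount to $\{g_2 > 0\} = (-\infty, \bar x_2)$ and $\{g_2 = 0\} = [\bar x_2, \infty)$. On $[\bar x_2, \infty)$ the definition \eqref{system2} gives $W_2(x) = -bx$, whence $g_2 \equiv 0$ there, which is the easy half. The whole content is therefore to prove that $g_2(x) > 0$ for every $x < \bar x_2$, split over the two remaining pieces $(\bar x_1, \bar x_2)$ and $(-\infty, \bar x_1]$.

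First I would record the boundary data at $\bar x_2$ coming from the $C^0$- and $C^1$-pasting conditions in \eqref{pasting system}, namely $g_2(\bar x_2) = \varphi_2(\bar x_2) + b\bar x_2 = 0$ and $g_2'(\bar x_2) = \varphi_2'(\bar x_2) + b = 0$. On $(\bar x_1, \bar x_2)$, where $g_2 = \varphi_2 + bx$, I would analyse $g_2'' = \varphi_2'' = \theta^2(C_{21}e^{\theta x} + C_{22}e^{-\theta x})$. Using the reparameterized constants together with the two inequalities in \eqref{NE11}, I would show $C_{21} > 0 > C_{22}$; consequently $\varphi_2'''(x) = \theta^3(C_{21}e^{\theta x} - C_{22}e^{-\theta x}) > 0$, so $\varphi_2''$ is strictly increasing and changes sign exactly once, at some $x^\sharp$, being negative to its left and positive to its right. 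A short computation gives $\varphi_2''(\bar x_2) = \tfrac{\theta^2}{r}((1-br)\bar x_2 - q) \ge 0$ by \eqref{NE11}, so $\bar x_2 \ge x^\sharp$ and $g_2$ is concave-then-convex on $(\bar x_1, \bar x_2)$. Combined with $g_2'(\bar x_2) = 0$, this forces $g_2'$ to have its unique minimum at $x^\sharp$ and to vanish at $\bar x_2$, so $g_2$ is either strictly decreasing or strictly increasing-then-decreasing on $[\bar x_1, \bar x_2]$; in either case its minimum is attained at an endpoint, i.e. $\min_{[\bar x_1, \bar x_2]} g_2 = \min\{g_2(\bar x_1), 0\}$.

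It then remains to prove the single inequality $g_2(\bar x_1) > 0$. Here I would substitute the explicit $C_{21}, C_{22}$, eliminate $\bar x_1$ via $\bar x_1 = \bar x_2 - \tfrac{\ln\tilde w}{\theta}$, and set $A := (1-br)\bar x_2 - q$; after collecting terms the identity $2r\tilde w\, g_2(\bar x_1) = A(\tilde w - 1)^2 + \tfrac{1-br}{\theta}(1 + 2\tilde w\ln\tilde w - \tilde w^2)$ emerges, whose right-hand side is precisely the left-hand side of \eqref{NE12}. Hence \eqref{NE12} is equivalent to $g_2(\bar x_1) > 0$, and the previous step yields $g_2 > 0$ on $(\bar x_1, \bar x_2)$. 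Finally, on $(-\infty, \bar x_1]$ Lemma \ref{lemma w1 qvi} gives $\delta(x) = x_1^* - x$, so $W_2 = \mathcal{H}W_2$ reduces $g_2$ to the affine map $x \mapsto \varphi_2(x_1^*) + d + \gamma x_1^* - (\gamma - b)x$, which is decreasing since $\gamma > b$ by \eqref{ass-ab}; its infimum over $(-\infty, \bar x_1]$ is attained at $\bar x_1$ and equals $g_2(\bar x_1) > 0$ by continuity. Assembling the three regions gives $\{g_2 > 0\} = (-\infty, \bar x_2)$ and $\{g_2 = 0\} = [\bar x_2, \infty)$, as claimed.

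The main obstacle I expect is the algebraic reduction showing that $g_2(\bar x_1) > 0$ is exactly \eqref{NE12}: it demands careful substitution of the reparameterized $C_{2i}$, elimination of $\bar x_1$ through $\tilde w = e^{\theta(\bar x_2 - \bar x_1)}$, and recognition of the resulting expression as the left-hand side of \eqref{NE12}. A secondary, more conceptual point is the shape argument, where one must justify that concavity-then-convexity of $g_2$ together with the double boundary condition at $\bar x_2$ rules out any interior dip below zero; this is precisely why the sign information $C_{21} > 0 > C_{22}$, hence the strict monotonicity of $\varphi_2''$, is essential rather than cosmetic.
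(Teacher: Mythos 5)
Your proof is correct, and on the decisive interval $(\bar x_1, \bar x_2)$ it takes a genuinely different route from the paper's — one that is in fact more solid. The overall decomposition (trivial on $[\bar x_2,\infty)$; reduce $(-\infty,\bar x_1]$ to positivity at $\bar x_1$ via the affine form of $g_2(x):=W_2(x)+bx$ with slope $-(\gamma-b)<0$; the identity $2r\tilde w\, g_2(\bar x_1)=((1-br)\bar x_2-q)(\tilde w-1)^2+\frac{1-br}{\theta}(1+2\tilde w\ln\tilde w-\tilde w^2)$ making $g_2(\bar x_1)>0$ equivalent to \eqref{NE12}) coincides with the paper's treatment of those two regions. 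Where you diverge is the middle interval: the paper substitutes $z=e^{\theta(\bar x_2-x)}$, discards the positive term $\frac{2(1-br)}{\theta}z\ln z$, and claims the remaining quadratic is positive for $z>1$ by factoring it as $(z-1)\left(z-\frac{A_+}{A_-}\right)$, with $A_\pm=(1-br)\left(\bar x_2\pm\frac{1}{\theta}\right)-q$. That factorization omits the leading coefficient $A_-$, which is strictly negative under \eqref{NE11}; the quadratic actually equals $A_-(z-1)\left(z-\frac{A_+}{A_-}\right)=(z-1)(A_-z-A_+)\le 0$ for all $z\ge 1$, so the paper's lower bound is non-positive and its step does not close — the discarded logarithmic term is precisely what keeps the full expression positive. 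Your argument never throws that term away: from \eqref{NE11}, read through \eqref{barx2} as $0\le(1-br)\bar x_2-q<(1-br)/\theta$, you get $C_{21}>0>C_{22}$, hence $g_2''=\varphi_2''$ strictly increasing with a single sign change and $\varphi_2''(\bar x_2)=\frac{\theta^2}{r}\left((1-br)\bar x_2-q\right)\ge 0$; together with the double boundary condition $g_2(\bar x_2)=g_2'(\bar x_2)=0$ this forces the minimum of $g_2$ over $[\bar x_1,\bar x_2]$ onto an endpoint, and \eqref{NE12} handles the left endpoint. In short, the paper attempts to cover $(\bar x_1,\bar x_2)$ with \eqref{NE11} alone and stumbles on a sign, using \eqref{NE12} only on the half-line; you spend both hypotheses on the middle interval — \eqref{NE11} for the concave-then-convex structure, \eqref{NE12} for $g_2(\bar x_1)>0$ — and thereby obtain a complete proof of the lemma as stated.
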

\begin{proof}
First, we recall that
$$
W_2(x) = \left\{ \begin{array}{ll} - bx & \mbox{ in } \; [ \bar x_2, \, \infty) \\ \varphi_2(x) & \mbox{ in } (\bar x_1, \, \bar x_2) \\ \mathcal{H} W_2(x) & \mbox{ in } (-\infty, \, \bar x_1] \end{array} \right.
$$
where 
$$
\varphi_2(x) = C_{21} e^{\theta x} + C_{22} e^{-\theta x} + \frac{q - x}{r}.
$$
We want to prove that $\varphi_2(x) > - bx $ in $(\bar x_1, \, \bar x_2)$ and $ \mathcal{H} W_2(x) > -bx $ in $(- \infty, \, \bar x_1]$.
For the first inequality we are interested in the conditions such that, for all $x \in (\bar x_1, \, \bar x_2)$, we have 
\begin{equation}\label{phi2positive}
C_{21} e^{\theta x} + C_{22}e^{-\theta x} + \frac{q - (1 - br)x}{r} > 0 ,
\end{equation} 
or, equivalently,
$$
 e^{\theta (x - \bar x_2)} \left[(1 - br)\left( \frac{1}{\theta} + \bar x_2\right) - q \right] + e^{\theta (\bar x_2 - x)} \left[(1 - br)\left( \bar x_2 - \frac{1}{\theta}\right)  - q\right] + 2\left( q - ( 1 - br)x\right) > 0.
$$
Now, applying the change of variable $e^{\theta(\bar x_2 - x)} = z > 1$ to the inequality above yields 
$$(1 - br)\left(\bar x_2 + \frac{1}{\theta}\right) - q  + z^2\left[(1 - br)\left(\bar x_2 - \frac{1}{\theta}\right) - q \right] + 2z\left(q - (1 - br)\bar x_2 + \frac{1 - br}{\theta}\ln z\right) > 0.$$
Since $\ln z >0$ and $1-br>0$ by assumption, the left-side above is bigger than
\[ (1 - br)\left(\bar x_2 + \frac{1}{\theta}\right) - q  + z^2\left[(1 - br)\left(\bar x_2 - \frac{1}{\theta}\right) - q \right] + 2z\left(q - (1 - br)\bar x_2 \right),\]
which is quadratic in $z$ and it can be factorized as  
$$ (z - 1)\left(z - \frac{(1 - br)\left(\bar x_2 + \frac{1}{\theta}\right) - q}{(1 - br)\left(\bar x_2 - \frac{1}{\theta}\right) - q}\right) .$$
We are going to show that our assumptions grant that the expression above is positive, which in turn will imply \eqref{phi2positive}.
Hence, the second factor is positive if the following holds:
$$(1 - br)\left(\bar x_2 - \frac{1}{\theta}\right) - q < 0, \quad (1 - br) \bar x_2 - q \geq 0.$$
Then, using \eqref{barx2}, the two inequalities above can be rewritten as 
$$
0 \le \frac{(1 - br)(1 - \lambda r)(\tilde w^2 - \tilde z)}{\theta \tilde w(1 - ar)(\tilde z + 1)} + \frac{1 - br}{1 - ar}s  - q < \frac{1 - br}{\theta},
$$ which is true by assumption.

For showing the second inequality,  i.e. $ \mathcal{H} W_2(x) > -bx $ in $(- \infty, \, \bar x_1]$, we observe first that
\begin{equation}\label{2nd-ineq}
\varphi_2(x_1^*) + d + \gamma (x_1^* - x) > - bx ,\quad x \, \in ( -\infty, \, \bar x_1].
\end{equation}
From the $C^0$-pasting condition in $\bar x_1$ we have that $\varphi_2(\bar x_1) = \varphi_2(x_1^*) + d + \gamma ( x_1^* - \bar x_1)$, therefore we can rewrite \eqref{2nd-ineq} as
$$
\varphi_2(\bar x_1) + \gamma (\bar x_1 - x) > - bx .
$$
Since $ b < \gamma $ we only need to check that $F(\bar x_1)>0$:
\begin{align*}
F(\bar x_1) & = \varphi_2(\bar x_1) + b \bar x_1 = C_{21} e^{\theta \bar x_1} + C_{22}e^{- \theta \bar x_1} + \frac{ q - (1 - br) \bar x_1}{r} ,\\
& = e^{- \theta( \bar x_2 - \bar x_1)} \left[(1 - br)\left(\bar x_2 + \frac{1}{\theta}\right) - q\right] + e^{\theta(\bar x_2 - \bar x_1)}\left[ (1 - br)\left(\bar x_2 - \frac{1}{\theta}\right) - q\right] + 2(q - ( 1 - br)\bar x_1) .
\end{align*}
Now, using again the change of variable $w=e^{\theta(\bar x_2 - \bar x_1)}$, we have $\bar x_1 = \bar x_2 - \frac{\ln w}{\theta}$ and so $F(\bar x_1)e^{\theta(\bar x_2 - \bar x_1)}$ can be re-expressed as
$$
((1 - br)\bar x_2 - q)(\tilde w - 1)^2 + \frac{1 - br}{\theta}(1 + 2\tilde w\ln \tilde w - \tilde w^2),
$$
 which, using \eqref{barx2}, can be rewritten as  
$$ \left(\frac{1 - br}{1 - ar}\left( \frac{(1 - \lambda r)(\tilde w^2 - \tilde z)}{\theta \tilde w(\tilde z + 1)} + s\right) - q\right)(\tilde w - 1)^2 + \frac{1 - br}{\theta}(1 + 2\tilde w \ln \tilde w - \tilde w^2) ,$$
which is positive by assumption. \qed 
\end{proof}

We conclude the appendix with two more lemmas on similar results for the other kind of equilibrium, where P1 forces P2 to stop the game.

\begin{lemma} \label{lemma2 w1 qvi} Let $W_1$ be as in \eqref{system1}. Assume there exists a solution $\hat w $ to \eqref{G(w) second case} such that
$$ (1 - \lambda r)(w - w\ln w - 1) + cr\theta w > 0 . $$
Then we have 
$$
\delta (x) = (\bar x_2 - x)\mathbf 1_{( -\infty, \, \bar x_2]}(x), \quad x \in \mathbb{R}.$$
Moreover, we have
\begin{eqnarray*}
\{\mathcal{M} W_1 - W_1 < 0\} = (\bar x_1, \, \infty), \quad \{\mathcal{M} W_1 - W_1 = 0\} = (- \infty, \, \bar x_1].
\end{eqnarray*}
\end{lemma}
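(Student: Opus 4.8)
The plan is to follow closely the argument of Lemma \ref{lemma w1 qvi}, the only substantive difference being that here the relevant auxiliary function has its global maximum pushed to the right endpoint $\bar x_2$ of the inner region rather than to an interior point. Since P1 always prefers to move the state to the right, I would first rewrite the intervention operator as
\[
\mathcal{M}W_1(x) = \max_{y \geq x}\{W_1(y) - c - \lambda(y - x)\} = \max_{y \geq x}\Gamma(y) - c + \lambda x, \qquad \Gamma(y) := W_1(y) - \lambda y,
\]
so that everything reduces to locating the global maximizer of $\Gamma$ and reading off $\delta(x)$ as the corresponding optimal target.

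Next I would analyse $\Gamma$ piece by piece using the explicit form of $W_1$ on $[\bar x_1, \infty)$. On $[\bar x_2, \infty)$ one has $\Gamma(x) = (a - \lambda)x$, which is strictly decreasing by \eqref{ass-ab}. On $(\bar x_1, \bar x_2)$ one has $\Gamma'(x) = \varphi_1'(x) - \lambda$, which vanishes at $\bar x_1$ by the $C^1$-pasting in \eqref{2pasting system}, while $\Gamma''(x) = \theta^2(C_{11}e^{\theta x} + C_{12}e^{-\theta x})$. The crux of the proof is to show $\Gamma'' > 0$ on this interval, which forces $\Gamma' > 0$ there (as $\Gamma'(\bar x_1) = 0$), i.e.\ $\Gamma$ strictly increasing. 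To get this I would insert the expression \eqref{x2 bar 1} for $\bar x_2$ and the relation $\bar x_1 = \bar x_2 - \theta^{-1}\ln \hat w$ into \eqref{C11 i}; a (somewhat tedious) simplification collapses the bracket to
\[
C_{11} = \frac{e^{-\theta\bar x_1}}{r\theta(\hat w - 1)^2}\big[(1 - \lambda r)(\hat w - \hat w\ln\hat w - 1) + cr\theta\,\hat w\big],
\]
so that the standing assumption is exactly equivalent to $C_{11} > 0$. Combining \eqref{C11 i}--\eqref{C12 i} gives the identity $C_{12} = C_{11}e^{2\theta\bar x_1} + (1 - \lambda r)e^{\theta\bar x_1}/(r\theta)$, whence $C_{12} > 0$ as well (using $1 - \lambda r > 0$); thus $\Gamma'' > 0$ as claimed. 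I would note in passing that the same computation yields the auxiliary inequality $(1 - ar)\bar x_2 - \theta^{-1}(1-\lambda r)\ln\hat w + cr - s < 0$ invoked in the proof of Proposition \ref{propequi2}, since its left-hand side equals $-2re^{\theta\bar x_1}C_{11} - (1-\lambda r)/\theta < 0$.

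With $\Gamma$ strictly increasing on $(\bar x_1, \bar x_2)$ and strictly decreasing on $[\bar x_2, \infty)$, the point $\bar x_2$ is the unique global maximizer of $\Gamma$ over $[\bar x_1, \infty)$; moreover the $C^0$-pasting conditions in \eqref{2pasting system} give $\Gamma(\bar x_1) = (a - \lambda)\bar x_2 - c = \Gamma(\bar x_2) - c$, and on $(-\infty, \bar x_1]$ the self-consistent definition $W_1 = \mathcal{M}W_1$ makes $\Gamma$ equal to the constant $\Gamma(\bar x_2) - c < \Gamma(\bar x_2)$. Hence $\max_{y \geq x}\Gamma(y) = \Gamma(\bar x_2)$ for every $x \leq \bar x_2$ (attained at $y = \bar x_2$), while for $x > \bar x_2$ the maximum is attained at $y = x$; reading off the argmax gives $\delta(x) = (\bar x_2 - x)\mathbf 1_{(-\infty,\bar x_2]}(x)$, the first claim.

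Finally I would read off the continuation regions from the same picture. On $(-\infty, \bar x_1]$ we have $\mathcal{M}W_1 - W_1 = 0$ by construction. For $x \in (\bar x_1, \bar x_2)$, $\mathcal{M}W_1(x) - W_1(x) = \Gamma(\bar x_2) - \Gamma(x) - c$, which is strictly negative because $\Gamma(x) > \Gamma(\bar x_1) = \Gamma(\bar x_2) - c$; and for $x \geq \bar x_2$ one gets $\mathcal{M}W_1(x) - W_1(x) = -c < 0$ directly. This yields $\{\mathcal{M}W_1 - W_1 < 0\} = (\bar x_1, \infty)$ and $\{\mathcal{M}W_1 - W_1 = 0\} = (-\infty, \bar x_1]$. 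The only genuinely delicate point is the algebraic identity for $C_{11}$ linking the hypothesis to the convexity of $\Gamma$; the rest is bookkeeping with the pasting conditions.
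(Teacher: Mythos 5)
Your proposal is correct, and I verified the pivotal algebraic identity: substituting $\bar x_1 = \bar x_2 - \theta^{-1}\ln\hat w$ and \eqref{x2 bar 1} into \eqref{C11 i} does collapse the bracket to give $C_{11} = \frac{e^{-\theta\bar x_1}}{r\theta(\hat w - 1)^2}\left[(1-\lambda r)(\hat w - \hat w\ln\hat w - 1) + cr\theta\hat w\right]$, so the standing hypothesis is exactly $C_{11}>0$, and the relation $C_{12} = C_{11}e^{2\theta\bar x_1} + (1-\lambda r)e^{\theta\bar x_1}/(r\theta)$ then gives $C_{12}>0$. Your overall skeleton (reduce $\mathcal{M}W_1$ to maximizing $\Gamma = W_1 - \lambda\,\cdot$, establish piecewise monotonicity, locate the unique global maximum at $\bar x_2$, read off $\delta$ and the two sets) is the same as the paper's, but the crux step differs: the paper proves $\Gamma' > 0$ on $(\bar x_1,\bar x_2)$ directly, writing $\Gamma'$ as a quadratic in $z = e^{\theta(x - \bar x_1)}$ and factorizing it as $(z-1)\bigl(z + \text{ratio}\bigr)$ times a coefficient, whose positivity requires $\frac{1-\lambda r}{\theta}(\ln\hat w - 1) - (1-ar)\bar x_2 - cr + s > 0$ — an inequality the paper only asserts is equivalent to the standing assumption ``after some algebraic manipulation.'' You instead prove strict convexity, $\Gamma'' = \theta^2(C_{11}e^{\theta x} + C_{12}e^{-\theta x}) > 0$, and combine it with the $C^1$-pasting $\Gamma'(\bar x_1) = 0$; since the paper's unproved coefficient condition is precisely $2re^{\theta\bar x_1}C_{11} > 0$, the two routes pivot on the identical inequality, but yours supplies the missing computation explicitly (and echoes the $\Gamma''$ technique already used in Lemma \ref{lemma w1 qvi}). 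A genuine added value of your version is the passing remark that $(1-ar)\bar x_2 - \theta^{-1}(1-\lambda r)\ln\hat w + cr - s = -2re^{\theta\bar x_1}C_{11} - (1-\lambda r)/\theta < 0$: this is exactly the inequality that the proof of Proposition \ref{propequi2} attributes to this lemma but which the paper's own proof never states, so your argument closes that small expository gap as well.
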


\begin{proof} First, observe that
\begin{eqnarray*}
\mathcal{M}W_1 = \max_{\delta \geq 0} \{ W_1( x+ \delta) - c - \lambda \delta \} = \max_{y  \geq x} \{ W_1( y) - c - \lambda ( y - x ) \}.
\end{eqnarray*}
Let us denote $\Gamma(y) := W_1(y) - \lambda y$. By definition of $W_1$ we have $\Gamma'(\bar x_1)  = 0 $. Moreover, the following properties hold true:
\begin{enumerate}
\item $\Gamma'(x) = 0$ in $(-\infty, \, \bar x_1]$;
\item $\Gamma'(x) = a - \lambda < 0$ in $[\bar x_2, \, \infty)$;
\item $\Gamma'(x)>0$ in $(\bar x_1, \bar x_2)$.
\end{enumerate}
As properties (i) and (ii) can be easily checked, we turn to showing (iii). Observe that, for all $x \in (\bar x_1, \bar x_2)$, one has 
$\Gamma'(x) = \varphi_1'(x) - \lambda = \theta C_{11} e^{\theta x} - \theta C_{12} e^{- \theta x} + \frac{1}{r} - \lambda $, hence
\begin{align*}
\Gamma'(x) =& \frac{\theta}{2} e^{\theta(x - \bar x_1)} \left[ (a - \lambda) \bar x_2 - \left(\bar x_1 + \frac{1}{\theta}\right)\frac{1 - \lambda r}{r} - c  \frac{s}{r}\right] \\
& -  \frac{\theta}{2} e^{-\theta(x - \bar x_1)} \left[ (a - \lambda) \bar x_2 - \left(\bar x_1 - \frac{1}{\theta}\right)\frac{1 - \lambda r}{r} - c + \frac{s}{r}\right] + \frac{1 - \lambda r}{r}. 
\end{align*}
Using the fact that $\bar x_1 = \bar x_2 - \frac{\ln \hat w}{\theta}$ and setting $z=e^{\theta(x - \bar x_1)}$ we can rewrite it as
$$
\left( - ( 1 - ar) \bar x_2 + \frac{1 - \lambda r}{\theta}\ln \hat w - cr + s\right)(z^2 - 1) - \frac{1 - \lambda r}{\theta}(z - 1)^2 > 0,
$$
which can be factorized as 
$$ ( z - 1)\left(z + \frac{\frac{1 - \lambda r}{\theta}(\ln \hat w + 1) - (1 - ar)\bar x_2 - cr + s}{\frac{1 - \lambda r}{\theta}(\ln \hat w - 1) - (1 - ar)\bar x_2 - cr + s}\right)>0 ,$$
which is true whenever $\frac{1 - \lambda r}{\theta}(\ln \hat w - 1) - (1 - ar)\bar x_2 - cr + s > 0$. Therefore, recalling \eqref{x2 bar 1}, after some algebraic manipulation, we obtain the equivalent condition 
$$(1 - \lambda r)(\hat w - \hat w\ln \hat w - 1) + cr\theta \hat w > 0 .$$
Hence property (iii) is fulfilled.  
As a consequence, $\Gamma$ has a unique global maximum point in $\bar  x_2$, and the rest of the proof follows the same lines as for Lemma \ref{lemma w1 qvi}. Hence, the details are omitted.
\qed
\end{proof}   

\begin{lemma} \label{lemma22} Let $W_2$ be as in \eqref{system2}. For every $ x \in \mathbb{R} $, assume there exists a solution $\hat w$ to \eqref{G(w) second case} such that:
$$
0 \le (1 - br)(\hat w^2 - 1) + 2(\theta r d - ( 1 - \gamma r) \ln \hat w) \hat w < (1 - br)(\hat w - 1)^2 .
$$
Then, we have 
\begin{eqnarray*}
\{x \in \mathbb R : W_2(x) > -bx\} = (- \infty, \, \bar x_2),\quad \{x \in \mathbb R : W_2(x) = -bx\} = [\bar x_2, \, \infty).
\end{eqnarray*}
\end{lemma}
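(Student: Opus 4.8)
The plan is to mirror the proof of Lemma~\ref{lemma2}, exploiting that the equality region is immediate and that the strict inequality splits along the two pieces $(\bar x_1,\bar x_2)$ and $(-\infty,\bar x_1]$ on which $W_2$ has different expressions. First I would record that, by the definition of $W_2$ in \eqref{system2}, one has $W_2(x)=-bx$ for every $x\in[\bar x_2,\infty)$, so that $[\bar x_2,\infty)\subseteq\{W_2(x)=-bx\}$ at once. It then remains to establish the strict inequality $W_2(x)>-bx$ throughout $(-\infty,\bar x_2)$, which I would treat on the two sub-intervals separately.

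On $(\bar x_1,\bar x_2)$ the claim reads $\varphi_2(x)+bx>0$, i.e.\ $C_{21}e^{\theta x}+C_{22}e^{-\theta x}+\tfrac{q-(1-br)x}{r}>0$, which is exactly the quantity analysed in \eqref{phi2positive}. Since the coefficients $C_{21},C_{22}$ in \eqref{C21 i}--\eqref{C22 i} coincide structurally with those of the first equilibrium, I would introduce the same substitution $z=e^{\theta(\bar x_2-x)}>1$, drop the nonnegative term carrying $\ln z$ (using $\ln z>0$ and $1-br>0$) to bound the left-hand side below by a quadratic in $z$, and factor it as $(z-1)\bigl(z-\rho\bigr)$ with $\rho=\tfrac{(1-br)(\bar x_2+1/\theta)-q}{(1-br)(\bar x_2-1/\theta)-q}$. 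Positivity of this product for $z>1$ follows from the two sign conditions $(1-br)(\bar x_2-1/\theta)-q<0$ and $(1-br)\bar x_2-q\ge0$. The one genuinely new verification is that these two conditions are precisely the hypothesis of the lemma: using \eqref{x2 bar 2} and multiplying by $\theta(\hat w-1)^2>0$, one checks that $\theta(\hat w-1)^2\bigl[(1-br)\bar x_2-q\bigr]=(1-br)(\hat w^2-1)+2(\theta rd-(1-\gamma r)\ln\hat w)\hat w$, so the left inequality in the hypothesis is equivalent to $(1-br)\bar x_2-q\ge0$ and the right inequality to $(1-br)\bar x_2-q<\tfrac{1-br}{\theta}$, i.e.\ $(1-br)(\bar x_2-1/\theta)-q<0$.

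On $(-\infty,\bar x_1]$ the argument is in fact shorter than in Lemma~\ref{lemma2}, because here the optimal target is $\bar x_2$ itself. By Lemma~\ref{lemma2 w1 qvi} one has $\delta(x)=\bar x_2-x$ on this interval, so that $\mathcal{H}W_2(x)=W_2(\bar x_2)+d+\gamma(\bar x_2-x)=-b\bar x_2+d+\gamma(\bar x_2-x)$, the last equality using $W_2(\bar x_2)=-b\bar x_2$. Subtracting $-bx$ gives $\mathcal{H}W_2(x)+bx=d+(\gamma-b)(\bar x_2-x)$, which is strictly positive since $d>0$, $\gamma-b>0$ by \eqref{ass-ab}, and $\bar x_2-x\ge\bar x_2-\bar x_1>0$. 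Combining the two sub-intervals yields $W_2(x)>-bx$ on all of $(-\infty,\bar x_2)$, which together with the equality on $[\bar x_2,\infty)$ gives the two stated sets.

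The only delicate point is the bookkeeping in the quadratic factorization on $(\bar x_1,\bar x_2)$ and the verification that the lemma's double inequality translates exactly into the required sign conditions on $\rho$; everything else reduces to the computations already performed for Lemma~\ref{lemma2} and to the sign assumptions \eqref{ass-ab}. I would therefore carry out the $(\bar x_1,\bar x_2)$ estimate in full and merely indicate that the $(-\infty,\bar x_1]$ part is elementary.
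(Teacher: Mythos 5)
Your proposal reproduces the paper's own proof of Lemma \ref{lemma22} essentially step by step: same decomposition into the three pieces of $W_2$, same substitution $z=e^{\theta(\bar x_2-x)}$, same discarding of the $\ln z$ term and factorization, and the same elementary estimate on $(-\infty,\bar x_1]$; your translation of the lemma's double inequality into the sign conditions $(1-br)\bar x_2-q\ge 0$ and $(1-br)(\bar x_2-1/\theta)-q<0$ via \eqref{x2 bar 2} is also correct. However, the key step on $(\bar x_1,\bar x_2)$ — in your write-up exactly as in the paper's — contains a sign slip that invalidates it as written. Set $A:=(1-br)\bigl(\bar x_2-\tfrac1\theta\bigr)-q$ and $C:=(1-br)\bigl(\bar x_2+\tfrac1\theta\bigr)-q$. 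The quadratic obtained after dropping the $\ln z$ term is
\begin{equation*}
Q(z)=Az^2+2\bigl(q-(1-br)\bar x_2\bigr)z+C=A(z-1)\left(z-\frac{C}{A}\right)=(z-1)(Az-C),
\end{equation*}
i.e.\ it equals $A(z-1)(z-\rho)$ with $\rho=C/A$, \emph{not} $(z-1)(z-\rho)$: the leading coefficient $A$ has been dropped. Under the two sign conditions one has $A<0$ and $C>0$, hence $\rho<0$; so for $z>1$ the product $(z-1)(z-\rho)$ is indeed positive, but $Q(z)=A(z-1)(z-\rho)$ is strictly \emph{negative} (equivalently, $Az-C<0$ for every $z>0$). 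Thus the lower bound you obtain by discarding the nonnegative $\ln z$ term is negative throughout $(1,\hat w)$, and its claimed positivity — the very conclusion the factorization was supposed to deliver — is false; the argument proves nothing about the sign of $\varphi_2(x)+bx$.

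The lemma itself is still true, but the $\ln z$ term cannot be thrown away: it is exactly what makes the expression positive. A repair staying close to your scheme: keep $g(z):=Q(z)+\frac{2(1-br)}{\theta}z\ln z=2rz\bigl(\varphi_2(x)+bx\bigr)$ with $x=\bar x_2-\frac{\ln z}{\theta}$, and compute $g(1)=0$, $g'(1)=0$, $g''(z)=2A+\frac{2(1-br)}{\theta z}$, which is decreasing in $z$ with $g''(1)=2\bigl[(1-br)\bar x_2-q\bigr]\ge 0$. Hence $g$ is convex and then concave on $[1,\hat w]$: on the convex part it increases from $g(1)=0$, and on the concave part it lies above the chord joining two nonnegative values, since at the right endpoint the $C^0$-pasting in $\bar x_1$ from \eqref{2pasting system} gives $g(\hat w)=2r\hat w\bigl(\varphi_2(\bar x_1)+b\bar x_1\bigr)=2r\hat w\bigl[d+(\gamma-b)(\bar x_2-\bar x_1)\bigr]>0$ by \eqref{ass-ab}. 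This yields $g>0$ on $(1,\hat w)$, i.e.\ $\varphi_2(x)>-bx$ on $(\bar x_1,\bar x_2)$. Your handling of the equality region $[\bar x_2,\infty)$ and of $(-\infty,\bar x_1]$ (via Lemma \ref{lemma2 w1 qvi} and $d+(\gamma-b)(\bar x_2-x)>0$) is correct and needs no change. To be fair, the flaw you inherited is present in the paper's own proofs of Lemmas \ref{lemma2} and \ref{lemma22}; but since the step ``positivity of this product follows from the two sign conditions'' does not establish positivity of the quadratic, your proof as proposed has a genuine gap and needs the above (or an equivalent) repair.
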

\begin{proof}
First, recall that
$$
W_2(x) = \left\{ \begin{array}{ll} - bx & \mbox{ in } \; [ \bar x_2, \, \infty) \\ \varphi_2(x) & \mbox{ in } (\bar x_1, \, \bar x_2) \\ \mathcal{H} W_2(x) & \mbox{ in } (-\infty, \, \bar x_1] \end{array} \right.
$$
where 
$$
\varphi_2(x) = C_{21} e^{\theta x} + C_{22} e^{-\theta x} + \frac{q - x}{r}.
$$
Hence, we want to prove that $\varphi_2(x) > - bx $ in $(\bar x_1, \, \bar x_2)$ and $ \mathcal{H} W_2(x) > -bx $ in $(- \infty, \, \bar x_1]$.
For the first inequality we are interested in the conditions granting
$$
C_{21} e^{\theta x} + C_{22}e^{-\theta x} + \frac{q - (1 - br)x}{r} > 0 ,\quad x \in (\bar x_1, \, \bar x_2),
$$ 
or equivalently
$$
e^{-\theta(\bar x_2 - x)} \left[ (1 - br)\left( \bar x_2 + \frac{1}{\theta}\right) - q\right] + e^{\theta(\bar x_2 - x)}\left[(1 - br)\left( \bar x_2 - \frac{1}{\theta}\right) - q\right] + 2(q - (1 - br)x) > 0.
$$
Letting $z=e^{\theta(\bar x_2 - x)}$, the above inequality holds whenever
$$
 (1 - br)\left( \bar x_2 + \frac{1}{\theta}\right) - q + \left[(1 - br)\left( \bar x_2 - \frac{1}{\theta}\right) - q\right]z^2 + 2\left (q - (1 - br)\left(\bar x_2 - \frac{\ln z}{\theta}\right)\right)z > 0,
$$
Since $\ln z >0$ and $1-br>0$ by assumption, the left-side above is bigger than
\[ (1 - br)\left(\bar x_2 + \frac{1}{\theta}\right) - q  + z^2\left[(1 - br)\left(\bar x_2 - \frac{1}{\theta}\right) - q \right] + 2z\left(q - (1 - br)\bar x_2 \right),\]
which can be factorized as in the proof of Lemma \ref{lemma2} in
$$ (z - 1)\left(z - \frac{(1 - br)\left(\bar x_2 + \frac{1}{\theta}\right) - q}{(1 - br)\left(\bar x_2 - \frac{1}{\theta}\right) - q}\right).$$
We are going to show that our assumptions grant that the expression above is positive. We proceed as in the proof of Lemma \ref{lemma2}: the second factor above is positive if the following holds 
$$(1 - br)\left(\bar x_2 - \frac{1}{\theta}\right) - q < 0, \quad (1 - br)\bar x_2  - q \geq 0,$$
which, using \eqref{x2 bar 2}, can be rewritten as 
$$
0 \le (1 - br)(\hat w^2 - 1) + 2(\theta r d - ( 1 - \gamma r) \ln \hat w) \hat w < (1 - br)(\hat w - 1)^2 ,
$$ which is true by assumption.

For the second inequality we have
$$
- b \bar x_2 + d + \gamma (\bar x_2 - x) > - bx, \quad x \in ( -\infty, \, \bar x_1].
$$
Since $\gamma > b$, the inequality holds whenever $
(\gamma - b) (\bar x_2 - \bar x_1) + d > 0$,
which is always true since $\bar x_2 > \bar x_1$ by the ordering condition. \qed
\end{proof}

\end{document}